\title{Coming down from infinity for coordinated particle systems}
\author{Varun Sreedhar\thanks{Department of Mathematics, University of California San Diego, \href{mailto:varun_sreedhar@brown.edu}{varun\_sreedhar@brown.edu}}}
\date{\today}
\newtheorem{theorem}{Theorem}
\newtheorem*{theorem*}{Theorem}
\newtheorem{proposition}{Proposition}
\newtheorem{lemma}{Lemma}
\newtheorem{corollary}[lemma]{Corollary}
\theoremstyle{definition}
\newtheorem{defi}{Definition}
\newtheorem{example}{Example}
\newcommand{\R}{\mathbb{R}}
\newcommand{\N}{\mathbb{N}}
\newcommand{\Beta}{\operatorname{Beta}}
\newcommand{\thmref}[1]{Theorem~\ref{T:#1}}
\newcommand{\lemref}[1]{Lemma~\ref{L:#1}}
\newcommand{\corref}[1]{Corollary~\ref{C:#1}}
\newcommand{\defref}[1]{Definition~\ref{D:#1}}
\newcommand{\propref}[1]{Proposition~\ref{P:#1}}
\begin{document}


\maketitle

\begin{abstract}
    We study coming down from infinity for coordinated particle systems. In a coordinated particle system, particles live on a set of sites $V$ and are able to coalesce, migrate, reproduce, and die. The dynamics of these events are coordinated in that many particles may undergo the same action simultaneously. Coming down from infinity is the phenomenon where a process starting with infinitely many particles will almost surely have only finitely many particles after any positive time. This phenomenon can be observed in some $\Lambda$-coalescents, and we give sufficient conditions to observe coming down from infinity in coordinated particle systems.
\end{abstract}

\section{Introduction} 

\noindent In this paper we study coordinated particle systems, introduced in \cite{CPS2021}, which are a family of models where particles which live on a set of sites undergo migration, coalescence, death, and reproduction. These actions are coordinated in the sense that the process may undergo large migration, coalescence, death, or reproduction events where a positive proportion of the particles at a site all take the same action simultaneously. For example a positive proportion of the particles at one site may migrate from one site to another. It is useful to imagine this process taking place on a graph where the vertices are the sites and the edges between sites correspond to where the total rate of any migrations or reproductions between those two sites is positive. The main goal of this paper is to provide necessary and sufficient conditions for a coordinated particle system to come down from infinity.


\subsection{Background}

In this paper we study the phenomenon of coming down from infinity in coordinated particle systems. Coming down from infinity occurs when a process started with infinitely many particles has only finitely many particles after any positive time. This phenomenon has been well studied in coalescent processes where particles reduce in number through mergers with other particles. A very important class of coalescent processes are the $\Lambda$-coalescents which are stochastic processes that take values in the set of partitions of $\N$. Each element of the partition can be thought of as a particle and coalescence is accomplished by merging elements of the partition which can be thought of as many particles joining together to form a single particle. The $\Lambda$-coalescent is characterized by the property that if we restrict the particles to a subset containing $b$ particles, then the rate at which a particular subset containing $k$ particles coalesces to a single particle is given by
\begin{align*}
    \lambda_{b,k} = \int_0^1 z^{k}(1-z)^{b-k} \frac{\Lambda(dz)}{z^2}.
\end{align*}

\noindent We note that the rate above only depends on the number of particles that coalesce and not on the size of each of the particles. One way to interpret the rate $\lambda_{b,k}$ is by the following. First a proportion $z \in (0,1]$ of the particles is chosen to coalesce by the measure $z^{-2}\Lambda(dz)$. The probability our specific subset of $k$ particles of the $b$ total particles coalesce will be $z^{k}(1-z)^{b-k}$. Thus the total rate comes from integrating $z^{k}(1-z)^{b-k}$ over all possible values of $z$ as we see above. A slight exception is the case $k = 2$ where we can see that the above still occurs, but we additionally have pairwise mergers at the fixed rate $\Lambda(\{0\})$. It has been shown in \cite{pitmanlambda} that a coalescent process is exchangeable (rate of coalescence does not depend on particle size), consistent (any restriction to a subset of particles is Markov), and has almost surely asynchronous mergers (no coalescence events occur at the same time) if and only if the process is a $\Lambda$-coalescent. One special $\Lambda$-coalescent is called the Kingman coalescent where each pair of particles merge at rate $1$. The Kingman coalescent corresponds to $\Lambda(dz) = \delta_0$ where $\delta_0$ is the Dirac measure at $0$.
\medskip

\noindent The notion of coming down from infinity for $\Lambda$-coalescents has been well studied. It was shown in \cite{lamdacomedown} that if $\gamma_b = \sum_{k = 2}^b (k-1)\binom{b}{k} \lambda_{b,k}$ is the total rate of decrease in the number of particles, then the $\Lambda$-coalescent comes down from infinity if and only if
\begin{align*}
    \sum_{b = 2}^\infty \gamma_b^{-1} < \infty.
\end{align*}
\medskip

\noindent In recent years there have been several extensions to the $\Lambda$-coalescent model. Some important examples include the spatial $\Lambda$-coalescent and the seed bank coalescent which were introduced in \cite{spatial} and \cite{seedbank2, seedbank1} respectively. The spatial $\Lambda$-coalescent is a model where particles live on a locally finite graph $G$, each particle performs a simple random walk on the graph, and particles on the same vertex of the graph behave like a $\Lambda$-coalescent. It was shown in \cite{spatial} that on a finite graph the spatial $\Lambda$-coalescent comes down from infinity if and only if the (non-spatial) $\Lambda$-coalescent comes down from infinity. In contrast, it was shown in \cite{Angel2012} that a spatial $\Lambda$-coalescent on an infinite graph will not come down from infinity under any conditions in a phenomenon called global divergence. The seed bank coalescent on the other hand is a coalescent where particles live on two sites $\{a,d\}$ and they are able to migrate between them. In this model the particles at site $a$ behave like a Kingman coalescent while particles at site $d$ undergo no coalescence. The rate of migrations between sites is defined by finite measures $M_{ad}$ and $M_{da}$ on $[0,1]$ where the rate of migration from site $a$ to $d$ for a subset of $k$ particles when there are $b$ particles present is given by
\begin{align*}
    \int_0^1 z^k(1-z)^{b-k} \frac{M_{ad}(dz)}{z}.
\end{align*}

\noindent The rates for migrations from site $d$ to $a$ are given similarly. In addition, it was shown in \cite{seedbank2} the necessary and sufficient conditions on the migration measures for the seed bank coalescent to come down from infinity. 
\medskip

\noindent A model that generalizes both the spatial $\Lambda$-coalescent and the seed bank coalescent is the coordinated particle system which was first introduced in \cite{CPS2021}. In this system, particles live on a set of sites and are able to coalesce, die, migrate, and reproduce. All of these actions are coordinated in the sense that one can have large coalescence, death, migration, or reproduction events where a proportion of the particles at a given site all perform one of these four actions at once. This is the same as how in the $\Lambda$-coalescent a large number of particles may coalesce into a single particle. In fact one can recover a $\Lambda$-coalescent from the coordinated particle system by specifying that the system has only a single site and the only dynamic on the particles is coalescence. Similarly one can recover both the spatial $\Lambda$-coalescent as well as the seed bank coalescents as special cases of the coordinated particle system. We shall note that these are not the only models that are special cases of the coordinated particle system; a much larger list can be found in Section 2 of \cite{CPS2021}. 
\medskip

\noindent We shall also note the connection of the coordinated particle system model to the multitype $\Lambda$-coalescents described in \cite{Johnston2023}. Both of these models consider coalescent processes with different types (sites); however, there are some key differences. The first difference is that multitype $\Lambda$-coalescents do not allow for any death, reproduction, or large migration actions. The second difference is that a multitype $\Lambda$-coalescent allows for large merger events where particles of different types may merge together to form a new particle which is not possible in a coordinated particle system. While these two models do exhibit different behavior, there exist models that are both multitype $\Lambda$-coalescents as well as coordinated particle systems. For example a spatial $\Lambda$-coalescent on a finite set of sites and the seed bank coalescent of \cite{seedbank1} are both a coordinated particle system and a multitype $\Lambda$-coalescent. It is also easy to see that the seed bank with simultaneous switching discussed in \cite{seedbank2} is an example of a coordinated particle system which is not a multitype $\Lambda$-coalescent, and one can construct a multitype $\Lambda$-coalescent that is not a coordinated particle system. Thus the coming down from infinity results in this paper are distinct from those found in \cite{Johnston2023}.

\subsection{Definition of the process}

\noindent A coordinated particle system on a finite or countable set of sites $V$ is a Markov process $(X(t))_{t\geq 0}$ with values in $\mathcal{P} \coloneqq (\N_0 \cup \{\infty\})^V$. In \cite{CPS2021} this process was constructed by first considering a process on $\N_0^V$ where the transition rates from a state $\pi \in \N_0^V$ are given by
    \begin{align}\label{eq:rates}
        \pi \mapsto \begin{cases}
        \pi-ke_v+ke_u & \text{ at rate } \int_{0}^1 \binom{\pi(v)}{k}z^k(1-z)^{\pi(v)-k} \frac{M_{vu}(dz)}{z}\quad\text{ for } 1 \leq k \leq \pi(v)\\
        \pi - ke_v & \text{ at rate } \int_{0}^1 \binom{\pi(v)}{k}z^k(1-z)^{\pi(v)-k} \frac{D_{v}(dz)}{z}\quad\text{ for } 1 \leq k \leq \pi(v)\\
        \pi + ke_u & \text{ at rate } \int_{0}^1 \binom{\pi(v)}{k}z^k(1-z)^{\pi(v)-k} \frac{R_{vu}(dz)}{z}\quad\text{ for } 1 \leq k \leq \pi(v)\\
        \pi - (k-1)e_v & \text{ at rate } \int_{0}^1 \binom{\pi(v)}{k}z^k(1-z)^{\pi(v)-k} \frac{\Lambda_{v}(dz)}{z^2}\quad \text{ for } 2 \leq k \leq \pi(v)
        \end{cases},
    \end{align}

\noindent and then extending the state space to $\mathcal{P} = (\N_0 \cup \{\infty\})^V$ by using a generator argument. In \eqref{eq:rates}, the measures $M_{vu}(dz), D_{v}(dz), R_{vu}(dz)$, and $\Lambda_{v}(dz)$ are finite measures on $[0,1]$ and $e_v \in \N_0^V$ is the vector of all $0$'s except for a $1$ at position $v \in V$. These measures control migration, death, reproduction, and coalescence in much the same way the measure $\Lambda(dz)$ controls coalescence in a $\Lambda$-coalescent. One key thing to note is that during a reproduction event, each particle produces only one offspring. We also note that these transition rates in \eqref{eq:rates} represent the dynamics of the process $(X(t))_{t\geq 0}$ when restricted to a finite subset of particles. To understand the structure of the transition rates let us focus on the migration events. We can see that when there are $\pi(v)$ particles at site $v$ there are $\binom{\pi(v)}{k}$ ways to choose a subset of $k$ particles and similar to the $\Lambda$-coalescent above we have a proportion $z > 0$ of the particles chosen to migrate by the measure $z^{-1}M_{vu}(dz)$ which means that the probability that a specific subset of $k$ particles all migrate from $v$ to $u$ is $z^k(1-z)^{\pi(v) - k}$. We shall also note from the rates is the scaling $z^{-1}$ for migration, death, and reproduction while coalescence is scaled by $z^{-2}$. This corresponds to the fact that migration, death, and reproduction are actions that involve a single particle, while coalescence involves at least a pair of particles. 
\medskip

\noindent Now one important type of construction for models like the $\Lambda$-coalescent, spatial $\Lambda$-coalescent, and the seed bank coalescents is the Poisson point process construction. Poisson point process constructions for $\Lambda$-coalescents and its generalizations are constructions of the processes where the dynamics of the particles are determined by Poisson point processes on $\R_+ \times [0,1]$ where an atom $(t,p)$ describes the time and the proportion of particles that undergo an action. These constructions are useful for coupling the coordinated particle systems to other processes in order to compare them to prove results. In this paper, the first main result is the Poisson point process construction for coordinated particle systems on a finite graph which leads to the following theorem.

\begin{theorem}\label{T:PPPconstruction}
    Fix a finite set of sites $V$, and let $\Lambda_v, D_v, M_{uv}$, and $R_{uv}$ be finite measures on $[0,1]$ for $u,v \in V$. Then there exists a probability space on which for each $\pi \in \mathcal{P}$ there exists a c\`adl\`ag Markov process $(X_\pi(t))_{t \geq 0}$ called the coordinated particle system with the following properties:

    \begin{enumerate}
        \item[(a)] $(X_\pi(t))_{t \geq 0}$ takes values in $\mathcal{P}$ and $X_\pi(0) = \pi$.

        \item[(b)] If $\pi_1, \pi_2 \in \mathcal{P}$ with $\pi_1 \leq \pi_2$, i.e. $\pi_1(v) \leq \pi_2(v)$ for all $v \in V$, then $X_{\pi_1}(t) \leq X_{\pi_2}(t)$ for all $t \geq 0$.

        \item[(c)] If $|\pi| < \infty$, i.e. $\sum_{v \in V} \pi(v) < \infty$, then the transition rates of $(X_\pi(t))_{t \geq 0}$ are given by \eqref{eq:rates}.
    \end{enumerate}
\end{theorem}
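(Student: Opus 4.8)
The plan is to build the process from a family of independent Poisson point processes, one for each possible action, and then take projective limits.

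\textbf{Setup of the driving noise.}
First I would set up the probability space. For each ordered pair $(v,u)$ with $M_{vu} \neq 0$ I place a Poisson point process on $\R_+ \times [0,1] \times [0,1]$ with intensity $dt \otimes z^{-1}M_{vu}(dz) \otimes d\theta$, where the extra $[0,1]$-coordinate $\theta$ will serve as a ``uniform label'' used to decide, particle by particle, whether it participates in the event. Similarly I attach such processes to the death measures $D_v$, the reproduction measures $R_{vu}$, and the coalescence measures $\Lambda_v$ (the last with intensity $z^{-2}\Lambda_v(dz)$, and I would treat the atom $\Lambda_v(\{0\})$ separately by adding an independent rate-$\Lambda_v(\{0\})$ Kingman-type collection of pairwise-merger clocks, exactly as in the standard $\Lambda$-coalescent PPP construction). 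Since $V$ is finite and all measures are finite, away from $z=0$ the total intensity is finite on compact time intervals, but near $z=0$ there are infinitely many small atoms; the point is that each such small atom only affects a given finite collection of particles with small probability, so the construction still makes sense on finite configurations.

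\textbf{Construction on finite configurations.}
Given $\pi$ with $|\pi| < \infty$, I would assign to the $\pi(v)$ particles at each site i.i.d. uniform labels, and run the dynamics: at an atom $(t,z,\theta)$ of the $M_{vu}$-process, each particle currently at $v$ whose label is $\le z$ migrates to $u$ (re-randomizing its label); similarly for death, reproduction (each participating particle spawns one offspring at $u$ with a fresh label), and coalescence (all participating particles at $v$ merge into one). One checks that on any finite time interval only finitely many atoms actually affect the finitely many particles present (a Borel--Cantelli argument using $\int_0^1 z\, z^{-1}M_{vu}(dz) < \infty$ and $\int_0^1 z^2\, z^{-2}\Lambda_v(dz) < \infty$), so the process is well-defined, càdlàg, and by construction a Markov jump process. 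Matching the probability that a fixed subset of $k$ of the $\pi(v)$ particles has all labels $\le z$, which is $z^k(1-z)^{\pi(v)-k}$, against \eqref{eq:rates} gives property (c). Using the \emph{same} driving PPPs for two initial configurations $\pi_1 \le \pi_2$ — concretely, realizing the particles of $\pi_1$ as a sub-collection of those of $\pi_2$ with identical labels — yields the monotone coupling, i.e.\ property (b), for finite configurations, since every action performed on a $\pi_1$-particle is performed on the corresponding $\pi_2$-particle.

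\textbf{Extension to $\mathcal{P}$.}
For general $\pi \in \mathcal{P} = (\N_0 \cup \{\infty\})^V$ I would define $X_\pi(t) \coloneqq \lim_{n} X_{\pi_n}(t)$ where $\pi_n \uparrow \pi$ is any sequence of finite configurations increasing to $\pi$ (e.g.\ capping each coordinate at $n$); by the monotonicity from the previous step the limit exists in $\mathcal{P}$ pointwise in $t$ and is independent of the approximating sequence, and monotonicity in the initial condition (property (b)) is inherited. Properties (a) and the càdlàg property for the limit need a short argument: right-continuity and the existence of left limits are preserved because the $X_{\pi_n}$ are monotone in $n$ and the state space $\mathcal{P}$ with the product order is a complete lattice, and one shows no ``explosion into a non-càdlàg path'' occurs by controlling the jump times via the driving PPPs on a fixed compact interval. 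The Markov property of the limit follows from the Markov property at finite level together with the fact that the driving noise after time $s$ is independent of $\mathcal{F}_s$ and the limit commutes with the (measurable) flow map. I expect the main obstacle to be exactly this last extension step: making sure that the limiting process is genuinely càdlàg (not merely that each fixed-time value is a monotone limit) and genuinely Markov when coordinates may equal $+\infty$, since then infinitely many coalescence/migration atoms can act at once; the resolution is that the generator argument of \cite{CPS2021} already guarantees a well-defined limiting semigroup, and one only needs to identify the PPP-driven process with it, which is done by checking agreement of finite-dimensional distributions on finite initial configurations and passing to the monotone limit.
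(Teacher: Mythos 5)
Your overall strategy is the same as the paper's: Poisson point processes marked by a participation mechanism, separate Kingman-type clocks for the atoms at $0$, a monotone coupling from shared noise, and a projective limit over truncations $\pi\wedge n$. However, there is a genuine flaw in your participation mechanism. You decide who participates in an event $(t,z,\cdot)$ by comparing a \emph{persistent} uniform label attached to each particle against $z$, re-randomizing only the labels of particles that act. The particles that do \emph{not} act keep their labels, so at the next atom the participation decisions are correlated with the previous ones: for example, after an event with proportion $z_1$ in which a particle did not migrate, its label is conditioned to exceed $z_1$, and it can never participate in a subsequent event with $z_2<z_1$. Consequently the transition rates agree with \eqref{eq:rates} only until the first coordinated event, and property (c) fails afterwards. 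The fix is exactly what the paper does: mark each atom with a \emph{fresh} i.i.d.\ Bernoulli$(z)$ sequence $\xi=(\xi_p)_{p\in\N}$, i.e.\ use intensity $dt\otimes z^{-1}M_{vu}(dz)\otimes P_z(d\xi)$, so that every particle flips an independent coin at every atom. Your single mark $\theta\in[0,1]$ per atom could in principle encode such a sequence, but as written it plays no role in your dynamics.

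A second, smaller gap: your Borel--Cantelli argument that ``only finitely many atoms affect the finitely many particles present'' presupposes that the particle count stays finite, which is not automatic once reproduction is allowed --- the population could a priori explode in finite time, after which the finiteness of the relevant atom rate breaks down. The paper closes this by coupling $(X_n(t))_{t\ge0}$ with a pure birth (Yule) process at rate $R=\sum_{u,v}R_{uv}([0,1])$ and using $E[Y(t)]=e^{Rt}<\infty$; you should include some such domination argument. Your treatment of the extension to infinite configurations and of property (b) is essentially the paper's, though at coalescence events the claim that ``every action performed on a $\pi_1$-particle is performed on the corresponding $\pi_2$-particle'' should be replaced by the counting bound that the larger system loses at most $(K_m-1)^+\le (K_n-1)^+ + (N_m-N_n)$ particles, which is what actually preserves the order.
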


\subsection{Coming down from infinity}

\noindent We shall now summarize the main results in this work on coming down from infinity, more specifically for instantaneous coming down from infinity where a coordinated particle system $(X(t))_{t \geq 0}$ satisfies $|X(0)| = \infty$, i.e. $\sum_{v \in V} X_v(0) < \infty$, but $P(|X(t)| < \infty \text{ for all } t> 0) = 1$. Thus, for all the following results we shall be assuming that none of measures controlling the dynamics of the coordinated particle system has an atom at $1$. This is because an atom at $1$ in a measure corresponds to every particle at a site taking an action simultaneously after an exponential waiting time. This can cause coordinated particle systems to come down from infinity not instantaneously, for example, by all of the particles at a single site undergoing a death event.
\medskip

\noindent The first main result is an extension of the coming down from infinity result obtained in Theorem 2.7 of \cite{seedbank2} where the authors obtain necessary and sufficient conditions for the seed bank coalescent with simultaneous switching to come down from infinity. We are able to model this seed bank coalescent as a coordinated particle system $(X(t))_{t \geq 0}$ on two sites $\{u,v\}$ where $\Lambda_u(dz) = \delta_0$ is Kingman, $M_{uv}(dz)$ and $M_{vu}(dz)$ are migration measures, and all other measures are $0$. Now the necessary and sufficient conditions shown in Theorem 2.7 of \cite{seedbank2} for this coalescent to stay infinite started with initial configuration $X_u(0) = \infty$ and $X_v(0) = 0$ was that the random variable $Y \sim (M_{uv}([0,1]))^{-1}M_{uv}(dz)$ satisfies $E[-\log Y] = \infty$ with the convention $-\log 0 = \infty$. This condition corresponds to the migration measure being strong enough for an infinite number of particles to jump from site $u$ to site $v$ before any positive time, essentially ``escaping" the Kingman coalescent. It was also shown in \cite{seedbank2} that if we let $U \sim \operatorname{Uniform}([0,1])$ be independent of $Y$, then the condition $E[-\log Y] = \infty$ is equivalent to
\begin{align*}
    \sum_{n = 1}^\infty \frac{E[(1-UY)^n]}{n} = \infty.
\end{align*}

\noindent In this paper we extend this result to seed bank coalescents where the coalescence measure at site $u$ is no longer Kingman, but rather a measure $\Lambda_u(dz) = f(z)dz$ where $f(z) \sim Bz^{1-\alpha}$ as $z \to 0$, i.e.
\begin{align*}
    \lim_{z \to 0^+}\frac{f(z)}{Bz^{1-\alpha}} = 1
\end{align*}

\noindent for some constants $B > 0$ and $\alpha \in (1,2)$. In particular this family of coalescents includes the Beta coalescents where $\Lambda(dz) = \Beta(2-\alpha, \alpha)$. For these coalescents, we obtain the following theorem for when an infinite number of particles to jump from site $u$ to site $v$ before any positive time, ``escaping" the coalescent.

\begin{theorem}\label{T:betasuffstrong}
    Let $(X(t))_{t \geq 0}$ be a coordinated particle system on two sites $\{u,v\}$ with coalescence according to $\Lambda_u(dz) = f(z)\;dz$ where $f(z) \sim Bz^{1-\alpha}$ as $z \to 0$ with $\alpha \in(1,2)$, death at $u$ according to $D_u(dz)$, and migration from $u$ to $v$ according to a measure $M_{uv}(dz)$. We shall assume that all other measures are $0$. Let $Y \sim M_{uv}([0,1])^{-1}M_{uv}(dz)$ and $U \sim \operatorname{Uniform}([0,1])$ be independent random variables. If the random variable $W \coloneqq UY$ satisfies
    \begin{align*}
        \sum_{n=1}^\infty \frac{E[(1-W)^{n}]}{n^{\alpha-1}} = \infty,
    \end{align*}
    
    \noindent then for any positive time $t > 0$ an infinite number of particles will jump from $u$ to $v$ before time $t$ with probability $1$. On the other hand, if
    \begin{align*}
        \sum_{n=1}^\infty \frac{E[(1-W)^{n}]}{n^{\alpha-1}} < \infty,
    \end{align*}

    \noindent then only finitely many particles will ever migrate from $u$ to $v$ with probability $1$.
\end{theorem}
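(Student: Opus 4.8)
The plan is to reduce the statement to a single question about the intensity of ``effective'' migration events near time $0$, to control that intensity via the small-time speed at which the coalescent at $u$ comes down from infinity, and finally to match the resulting integral criterion with the series in the statement. Throughout, $a\asymp b$ means $a/b$ is bounded above and below by positive constants, and I take the initial condition $X(0)=(\infty,0)$ (at sites $u,v$).

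\emph{Step 1 (reduction to an activation criterion).} Since $f(z)\sim Bz^{1-\alpha}$ with $\alpha\in(1,2)$, a direct computation of $\gamma_b=\int_0^1(bz-1+(1-z)^b)z^{-2}f(z)\,dz$ gives $\gamma_b\sim Cb^{\alpha}$ for a constant $C=C(B,\alpha)>0$, so $\sum_b\gamma_b^{-1}<\infty$ and the $\Lambda_u$-coalescent comes down from infinity; working on the probability space of \thmref{PPPconstruction} and simply deleting the death and migration atoms produces a coalescent-only process $X_u^{\mathrm{coal}}$ with $X_u(t)\le X_u^{\mathrm{coal}}(t)<\infty$ for every $t>0$. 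In the Poisson construction a migration atom $(s,z)$ moves $\mathrm{Bin}(X_u(s^-),z)$ particles from $u$ to $v$, hence it is ``activated'' (moves at least one particle) with probability $1-(1-z)^{X_u(s^-)}$, independently across atoms given the path $(X_u(\cdot))$. Because $X_u(s^-)<\infty$ for $s>0$, because there is no reproduction, and because $X_u+X_v$ is non-increasing on $(0,\infty)$ (migration preserves it while coalescence and death decrease it), one checks the exhaustive dichotomy: either $X_v(\varepsilon_*)=\infty$ for some $\varepsilon_*>0$, in which case $X_v(t)=\infty$ for all $t>0$, or $X_v(\varepsilon)<\infty$ for all $\varepsilon>0$, in which case $X_v(\infty)\le X_u(\varepsilon_1)+X_v(\varepsilon_1)<\infty$ for any $\varepsilon_1>0$. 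Moreover $X_v(t)=\infty$ for all $t>0$ exactly when infinitely many migration atoms in $(0,\varepsilon]$ are activated for every $\varepsilon>0$; thinning the migration Poisson process, the activated atoms form, conditionally on $(X_u(\cdot))$, a Poisson process with intensity $\rho(s)\,ds$, where $\rho(s)=\int_0^1\bigl(1-(1-z)^{X_u(s^-)}\bigr)z^{-1}M_{uv}(dz)$. So everything reduces to deciding whether $\int_0^\varepsilon\rho(s)\,ds=\infty$; by Blumenthal's $0$--$1$ law this has probability $0$ or $1$.

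\emph{Step 2 (speed of $X_u$).} The key input is that, with probability $1$ for the upper bound and with positive probability for the lower bound, $c_1 s^{-1/(\alpha-1)}\le X_u(s)\le c_2 s^{-1/(\alpha-1)}$ for all $s$ in a right-neighbourhood of $0$, for constants $0<c_1\le c_2$. The upper bound is immediate from $X_u\le X_u^{\mathrm{coal}}$ together with the classical small-time asymptotics of a $\Lambda$-coalescent whose rate $\gamma_b$ is regularly varying of index $\alpha\in(1,2)$, namely $X_u^{\mathrm{coal}}(s)\sim (C(\alpha-1)s)^{-1/(\alpha-1)}$. For the lower bound, observe that $(X_u(t))_{t\ge0}$ has exactly the law of the block count of a one-site coordinated particle system with coalescence $\Lambda_u$ and death measure $D_u+M_{uv}$ — from the viewpoint of site $u$, migration to the inert site $v$ is indistinguishable from death. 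Since $D_u+M_{uv}$ is a finite measure, its contribution to the drift of the block count is $O(b)$, negligible next to the coalescence drift $\gamma_b\asymp Cb^{\alpha}$; hence the standard ODE/martingale comparison underlying speed-of-coming-down estimates delivers the matching lower order $c_1 s^{-1/(\alpha-1)}$. I expect this lower bound to be the main obstacle: if an earlier section of the paper already records speeds of coming down from infinity for one-site coordinated systems it can be invoked directly, otherwise this comparison has to be carried out, with the extra (possibly infinite-activity) death atoms handled by truncation.

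\emph{Step 3 (the integral criterion).} On the event of Step 2 one has, for small $s$, $\rho(s)\asymp\int_0^1\min\!\bigl(z\,s^{-1/(\alpha-1)},1\bigr)z^{-1}M_{uv}(dz)$, so by Tonelli $\int_0^{\varepsilon_0}\rho(s)\,ds\asymp\int_0^1 z^{-1}\bigl(\int_0^{\varepsilon_0}\min(z\,s^{-1/(\alpha-1)},1)\,ds\bigr)M_{uv}(dz)$; splitting the inner integral at $s=\sigma(z)\asymp z^{\alpha-1}$ (where $z\,s^{-1/(\alpha-1)}=1$) and using $1/(\alpha-1)>1$ gives $\int_0^{\varepsilon_0}\min(z\,s^{-1/(\alpha-1)},1)\,ds\asymp z^{\alpha-1}$, whence $\int_0^{\varepsilon_0}\rho(s)\,ds\asymp\int_0^1 z^{\alpha-2}M_{uv}(dz)$. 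Combining with Step 1: if $\int_0^1 z^{\alpha-2}M_{uv}(dz)<\infty$ then, using $X_u\le X_u^{\mathrm{coal}}$ and the upper speed bound, $\int_0^\varepsilon\rho<\infty$ a.s., so only finitely many atoms are ever activated and $X_v(\infty)<\infty$ a.s.; if $\int_0^1 z^{\alpha-2}M_{uv}(dz)=\infty$ then on the positive-probability event of Step 2 we get $\int_0^\varepsilon\rho=\infty$, hence $P(X_v(t)=\infty\ \forall t>0)>0$, which by the $0$--$1$ law forces this probability to be $1$.

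\emph{Step 4 (matching the series).} It remains to prove $\sum_{n\ge1}n^{-(\alpha-1)}E[(1-W)^n]\asymp\int_0^1 z^{\alpha-2}M_{uv}(dz)$. Integrating out $U$ gives $E[(1-W)^n]=M_{uv}([0,1])^{-1}\int_0^1\frac{1-(1-z)^{n+1}}{(n+1)z}M_{uv}(dz)$, so by Tonelli the series equals $M_{uv}([0,1])^{-1}\int_0^1 z^{-1}\bigl(\sum_{n\ge1}\frac{1-(1-z)^{n+1}}{n^{\alpha-1}(n+1)}\bigr)M_{uv}(dz)$. Splitting the sum at $n\asymp 1/z$, where $1-(1-z)^{n+1}\asymp\min(nz,1)$, and using $z\sum_{n\le1/z}n^{1-\alpha}\asymp z^{\alpha-1}$ together with $\sum_{n>1/z}n^{-\alpha}\asymp z^{\alpha-1}$, one gets $\sum_{n\ge1}\frac{1-(1-z)^{n+1}}{n^{\alpha-1}(n+1)}\asymp z^{\alpha-1}$ uniformly in $z\in(0,1]$, so the series is finite iff $\int_0^1 z^{\alpha-2}M_{uv}(dz)<\infty$. (As a sanity check, letting $\alpha\uparrow2$ turns $z^{\alpha-2}$ into $\log(1/z)$, recovering the condition $E[-\log Y]=\infty$ of \cite{seedbank2}.) Together with Step 3 this establishes the dichotomy.
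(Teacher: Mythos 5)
Your proposal is correct in outline but follows a genuinely different route from the paper. The paper discretizes by the block-count level: it introduces the events $M_n$ (a migration occurs while $X_u=n$), computes $P(M_n\mid V_n)=\gamma(n)/(\lambda_n+d_n+\gamma(n))$ with $\gamma(n)=nM_{uv}([0,1])E[(1-W)^{n-1}]$, and then needs the renewal-theoretic input $P(V_n)\to\alpha-1$ (\lemref{percenthit}, which is the bulk of Section 3 and generalizes the hitting-probability results of \cite{betasmalltime} to coalescents with death), the Kochen--Stone lemma to handle the dependence among the $M_n$, and Blumenthal's $0$--$1$ law. You instead discretize by time, using the a.s.\ small-time speed $X_u(s)\asymp s^{-1/(\alpha-1)}$ to reduce everything to the explicit integral criterion $\int_0^1 z^{\alpha-2}M_{uv}(dz)<\infty$, which your Step 4 correctly identifies as equivalent to the series in the statement (the paper never states this integral form, though it is implicit in its Example). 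Your route buys a more transparent criterion and avoids both the renewal theory and the Kochen--Stone correlation estimate; what it costs is exactly what you flag in Step 2: the a.s.\ \emph{lower} bound $X_u(s)\geq c_1s^{-1/(\alpha-1)}$ for the coalescent-with-death, which is not in Section 3 of the paper but does follow from \lemref{removedeath} (proved later, in Section 4: $X_u(t)+1\geq\widehat X(t)$ with $\widehat X$ a $(2\Lambda_u+D_u+M_{uv})$-coalescent, whose $\gamma_b$ is still $\sim 2Cb^\alpha$) combined with the a.s.\ speed asymptotics of \cite{speed}; this holds a.s., not merely with positive probability, so your appeal to the $0$--$1$ law in Step 3 is not even needed.

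One step you should phrase more carefully: the claim that activated migration atoms form, \emph{conditionally on the path} $(X_u(\cdot))$, a Poisson process with intensity $\rho(s)\,ds$ is circular as written, since the path already reveals which atoms moved particles. The clean fix is available from the construction in \thmref{PPPconstruction}: an atom $(s,z,\xi)$ is activated iff $\sum_{p\leq X_u(s-)}\xi_p\geq 1$, so on the event where the deterministic envelopes $N_-(s)\leq X_u(s-)\leq N_+(s)$ hold, activation is sandwiched between the events $\{\sum_{p\leq N_-(s)}\xi_p\geq1\}$ and $\{\sum_{p\leq N_+(s)}\xi_p\geq1\}$, which depend only on the Poisson marks and hence genuinely form independent thinnings with intensities $(1-(1-z)^{N_\mp(s)})z^{-1}M_{uv}(dz)\,ds$; your Step 3 computation then applies verbatim to these deterministic intensities, and the finiteness direction also needs your observation that finitely many activated atoms, each occurring at a time $s>0$ where $X_u(s-)<\infty$, move finitely many particles in total. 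With those two points shored up the argument is complete.
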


\noindent It is important to note the similarity between this theorem and Theorem 2.7 of \cite{seedbank2} which matches up with the heuristic that the Kingman coalescent corresponds to $\alpha = 2$. Now because the Kingman coalescent is maximal in the speed of coming down from infinity in the sense of \cite{speed}, we should expect it to be easier for a migration measure to satisfy the conditions of \thmref{betasuffstrong} rather than $E[-\log Y] = \infty$ and this is indeed the case. In Section 3, after the proof of \thmref{betasuffstrong}, we shall give an example of a migration measure $M_{uv}(dz)$ where infinitely many particles are able to jump from site $u$ to $v$ if site $u$ has coalescence according to a Beta coalescent with parameter $\alpha < 2$, but only finitely many particles will migrate from site $u$ to $v$ if site $u$ has coalescence according to a Kingman coalescent.
\medskip

\noindent The two other main results of this paper are conditions for a general coordinated particle system to come down from infinity. In the original work on the coordinated particle system, \cite{CPS2021}, necessary and sufficient conditions were shown for the nested coalescent, a special case of the coordinated particle system, to come down from infinity. The conditions proved were on the moment dual of the nested coalescent. In this paper we wish to prove necessary and sufficient conditions for the coordinated particle system to not come down from infinity and for these conditions to be based on the measures $M_{vu}, D_{v}, R_{vu}$, and $\Lambda_{v}$ that drive the process. Motivated by how infinitely many particles are able to jump out of a site due to a strong migration measure, we make the definition that a migration measure $M_{uv}$ or reproduction measure $R_{uv}(dz)$ is \emph{$\Lambda_u$-strong} if before any positive time an infinite number of particles are able to migrate/reproduce out of site $u$ to site $v$. We make the following definition.

\begin{defi}\label{D:lambdastrong}
    Let $\Lambda(dz)$ be a coalescence measure such that the $\Lambda$-coalescent comes down from infinity. We say that a migration measure $M(dz)$ is \emph{$\Lambda$-strong} if with probability $1$ an infinite number of particles migrate out of a site with coalescence according to $\Lambda(dz)$ before time $t$ for all $t > 0$. More rigorously define a coordinated particle system $(X(t))_{t \geq 0}$ on sites $\{u,v\}$ where $\Lambda_u(dz) = \Lambda(dz)$, $M_{uv}(dz) = M(dz)$, $X_u(0) = \infty$, and $X_v(0) = 0$. Set all other measures to be $0$. Then $M$ is $\Lambda$-strong if $P(X_v(t) = \infty) = 1$ for all $t > 0$. Similarly, we shall say that a reproduction measure $R_{uv}(dz)$ is $\Lambda$-strong if with probability $1$ an infinite number of particles reproduce at a site with coalescence according to $\Lambda(dz)$ before time $t$ for all $t > 0$.
\end{defi}

\noindent This definition allows us to prove sufficient conditions for a coordinated particle system to stay infinite.
\begin{theorem}\label{T:comingdown}
    A coordinated particle system on a finite set of sites $V$ started with infinitely many particles at site $v_1 \in V$ will not come down from infinity if the following conditions are met.

    \begin{enumerate}
        \item[(a)] There exists a site $v_k \in V$ such that the $\Lambda_{v_k}$-coalescent does not come down from infinity.

        \item[(b)] There exist sites $v_2, \ldots, v_{k-1}$ such that for every $i = 1, \ldots, k-1$ either $M_{v_iv_{i+1}}(dz)$ or $R_{v_iv_{i+1}}(dz)$ is $\Lambda_{v_i}$-strong.
    \end{enumerate}
\end{theorem}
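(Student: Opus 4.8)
The strategy is to build a chain of couplings using the monotonicity property (b) of Theorem \ref{T:PPPconstruction}, transporting infinitely many particles from the starting site $v_1$ along the path $v_1 \to v_2 \to \cdots \to v_k$ until they reach the site $v_k$ whose coalescent does not come down from infinity, where they must remain infinite in number. The key point is that "$\Lambda_{v_i}$-strong" is defined precisely so that, in isolation, a site with coalescence $\Lambda_{v_i}$ receiving an infinite input pushes infinitely many particles to the next site before any positive time; I need to propagate this down the chain and combine it with extra dynamics (death, coalescence into sites already on the path, migrations/reproductions to sites off the path) without losing the conclusion.

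First I would set up the comparison. Let $(X(t))_{t\ge 0}$ be the full coordinated particle system with $X_{v_1}(0)=\infty$. For each edge $v_i\to v_{i+1}$ along the path, I would introduce an auxiliary two-site system $(Y^{(i)}(t))_{t\ge 0}$ on $\{v_i,v_{i+1}\}$ that keeps only the coalescence measure $\Lambda_{v_i}$ at $v_i$ and only the $\Lambda_{v_i}$-strong measure ($M_{v_iv_{i+1}}$ or $R_{v_iv_{i+1}}$) on that edge, all other measures set to $0$, started from $Y^{(i)}_{v_i}(0)=\infty$, $Y^{(i)}_{v_{i+1}}(0)=0$. By definition of $\Lambda_{v_i}$-strong, $P(Y^{(i)}_{v_{i+1}}(t)=\infty)=1$ for every $t>0$. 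The main technical content is a \emph{domination lemma}: I would show that if the number of particles ever entering site $v_i$ in $X$ is infinite (say, infinitely many arrive before every positive time), then infinitely many particles enter $v_{i+1}$ in $X$ before every positive time as well. To prove this I would couple $X$, restricted to what happens at $v_i$, with $Y^{(i)}$: the particles at $v_i$ in $X$ experience coalescence at rate $\Lambda_{v_i}$ (same as in $Y^{(i)}$) plus additional thinning from death $D_{v_i}$, from migrations/reproductions to sites other than $v_{i+1}$, and coalescences are not slowed down. The subtlety is that these extra events remove particles from $v_i$ before they can be sent to $v_{i+1}$. The clean way around this is to observe that the $\Lambda_{v_i}$-strong property is a statement about the migration/reproduction PPP atoms acting on $v_i$: using the PPP construction of Theorem \ref{T:PPPconstruction} and the fact that one may label the infinitely many particles at $v_i$ and ask each large-migration atom to act on a fixed distinguished sub-population, I would argue that even after throwing away particles lost to competing events, enough remain at each instant (since $v_i$ still holds infinitely many, being continually resupplied) for the strong edge to export infinitely many to $v_{i+1}$ before time $t$; this uses that an infinite set minus the particles consumed by finitely-active competing measures in a short interval is still infinite, together with a Borel–Cantelli / limsup argument over a sequence of times $t_n\downarrow 0$.

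Then I would run the induction. Since $X_{v_1}(0)=\infty$, site $v_1$ starts with infinitely many particles, so the domination lemma applied to the edge $v_1\to v_2$ gives infinitely many particles at $v_2$ before every positive time; iterating along $v_2\to v_3\to\cdots\to v_{k-1}\to v_k$ yields that $X_{v_k}(t)=\infty$ for all $t>0$ with probability $1$ — strictly, at times $t_n\downarrow 0$ infinitely many have arrived, and since the full system's state is in $\mathcal P=(\N_0\cup\{\infty\})^V$ and particle counts at $v_k$ can only be reduced, not lost, by the finitely many measures acting there, one deduces $X_{v_k}(t)=\infty$ for all $t>0$. Finally, at site $v_k$ I would compare $X$ restricted to $v_k$ with a pure $\Lambda_{v_k}$-coalescent started from $\infty$ (again via monotonicity, dropping all measures except $\Lambda_{v_k}$ and adding back an infinite supply); by hypothesis (a) that coalescent does not come down from infinity, so $P(|X_{v_k}(t)|=\infty\text{ for some, hence all small }t>0)>0$; combined with the almost-sure supply of infinitely many particles established above, I get $P(X_{v_k}(t)=\infty)=1$ for all $t>0$, hence $|X(t)|=\infty$, i.e.\ $(X(t))_{t\ge0}$ does not come down from infinity.

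The main obstacle I anticipate is making the domination lemma rigorous in the presence of competing events at $v_i$: naively, death or migration-to-another-site atoms could in principle conspire to remove exactly the particles that a strong edge would have exported, so one cannot simply say "$X_{v_i}$ dominates $Y^{(i)}_{v_i}$ pointwise and therefore exports more." The resolution I would pursue is to work directly with the PPP atoms on $\R_+\times[0,1]$ driving the edge $v_i\to v_{i+1}$: conditionally on the (independent) atoms of the other measures, the export across the strong edge is still governed by a point process that, because $v_i$ is continually refilled to capacity $\infty$, sees an effectively infinite population at every atom time, so the $\Lambda_{v_i}$-strong estimate — which is an estimate on sums like $\sum_n n^{1-\alpha}E[(1-W)^n]$ type quantities or their analogues, i.e.\ purely on the measure, not on the competing dynamics — goes through unchanged. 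Formalizing "continually refilled to capacity $\infty$" via the induction hypothesis and a monotone-limit / restriction argument (taking finite truncations of the initial condition and passing to the limit using property (b) of Theorem \ref{T:PPPconstruction}) is where the care is needed.
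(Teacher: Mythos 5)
Your overall plan (transport infinitely many particles along the path via the strong edges, then park them at $v_k$) matches the paper's, and you correctly identify the central obstacle: competing dynamics at each intermediate site could remove particles before the strong edge exports them. But the resolution you sketch has a genuine gap. You argue that ``an infinite set minus the particles consumed by finitely-active competing measures in a short interval is still infinite,'' but the competing Poisson point processes (coordinated death, migration to off-path sites) are not finitely active: their intensities $z^{-1}D^+_{v_i}(dz)$, $z^{-1}M^+_{v_iv'}(dz)$ can be infinite near $0$, so there can be infinitely many competing atoms in every time interval, each removing a positive fraction of the particles. The mechanism the paper actually uses (\propref{deathdoesntmatter}) is different and essential: because none of the measures has an atom at $1$, the pure-death process does not come down from infinity, so for each $t$ there is an infinite subpopulation of particles at $v_i$ never touched by any death or off-path atom up to time $t$; restricted to that subpopulation the dynamics are exactly the two-site system in the definition of $\Lambda_{v_i}$-strong, and the export of infinitely many particles follows. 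You gesture at a ``distinguished sub-population'' but do not supply this argument, and without it the domination lemma does not go through.

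A second gap concerns gradual arrivals. The definition of $\Lambda_{v_i}$-strong assumes all infinitely many particles are present at $v_i$ at time $0$, whereas in your chain the particles at $v_2,\dots,v_{k}$ arrive over time; it is not monotone in the needed direction to replace gradual arrivals by an all-at-once start (later arrivals see less coalescence but also have less time to migrate). The paper circumvents this at intermediate sites by proving the transport statement for finite targets --- with probability close to $1$, at least $m'$ particles reach $v_i$ by time $t/2$, then one restarts via the strong Markov property with $m'$ playing the role of the initial count $n$ (\lemref{inftymovements}) --- and handles the final site with a separate frozen-particle coupling (\lemref{killingcouple}) showing the gradually-fed coalescent-with-death at $v_k$ dominates one started from infinity. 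Relatedly, your last step compares $X_{v_k}$ with a \emph{pure} $\Lambda_{v_k}$-coalescent, but dropping the death and out-migration at $v_k$ only increases the particle count, so this is the wrong direction for a lower bound; you need the coalescent \emph{with} death (absorbing $D_{v_k}+\sum_{u}M_{v_ku}$) together with the fact that death does not change whether a $\Lambda$-coalescent comes down from infinity. Finally, ``does not come down from infinity'' yields probability $1$ of staying infinite, not merely positive probability as written.
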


\noindent We can even make \thmref{comingdown} into a necessary condition rather than just sufficient granted that the coordinated particle system has no reproduction.

\begin{theorem}\label{T:converse}
    Suppose that a coordinated particle system $(X(t))_{t \geq 0}$ on a finite set of sites $V$ with no reproduction. Suppose $(X(t))_{t \geq 0}$ stays infinite. Then the following three conditions must be met.

    \begin{enumerate}
        \item[(a)] There exists a site $v_1 \in V$ with $X_{v_1}(0) = \infty$.
        
        \item[(b)] There exists a site $v_k \in V$ such that the $\Lambda_{v_k}$-coalescent does not come down from infinity. 

        \item[(c)] There exist sites $v_2, \ldots, v_{k-1}$ such that for every $i = 1, \ldots, k-1$ either $M_{v_iv_{i+1}}(dz)$ or $R_{v_iv_{i+1}}(dz)$ is $\Lambda_{v_i}$-strong.
    \end{enumerate}
\end{theorem}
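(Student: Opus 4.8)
The plan is to prove the contrapositive: assuming $(X(t))_{t\ge 0}$ does not come down from infinity, we verify (a), (b) and (c) in turn. With no reproduction, every transition in \eqref{eq:rates} has the form $\pi\mapsto\pi-ke_v$, $\pi\mapsto\pi-ke_v+ke_u$, or $\pi\mapsto\pi-(k-1)e_v$, none of which increases $|\pi|$; hence $t\mapsto|X(t)|$ is almost surely non-increasing, so ``does not come down'' forces $P(|X(t)|=\infty)>0$ for some $t>0$ (and then, by monotonicity, throughout an initial time interval). This already gives (a): if $|X(0)|<\infty$ then $|X(t)|\le|X(0)|<\infty$ for all $t$, so $|X(0)|=\infty$, and since $V$ is finite some $v_1\in V$ has $X_{v_1}(0)=\infty$.

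For (b), suppose every $\Lambda_v$-coalescent comes down from infinity, and write $\gamma^{(v)}_b=\sum_{k=2}^b(k-1)\binom bk\lambda^{(v)}_{b,k}=\int_0^1 z^{-2}\bigl(bz-1+(1-z)^b\bigr)\Lambda_v(dz)$ for the coalescence speed at $v$; note $b\mapsto\gamma^{(v)}_b$ is non-decreasing, with increments $\int_0^1 z^{-1}\bigl(1-(1-z)^b\bigr)\Lambda_v(dz)\ge 0$, and that $\sum_b(\gamma^{(v)}_b)^{-1}<\infty$ by \cite{lamdacomedown}. By \thmref{PPPconstruction}(b) it suffices to treat $X_v(0)=\infty$ for all $v$. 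Migration preserves $N(t):=|X(t)|$ and death only decreases it, so $N$ decreases at rate at least $\sum_{v\in V}\gamma^{(v)}_{X_v(t)}$. The crucial, migration-robust point is that with only $|V|$ sites the fullest site always holds at least $N(t)/|V|$ particles, whence this rate is at least $\psi(N(t))$ with $\psi(n):=\min_{v\in V}\gamma^{(v)}_{\lceil n/|V|\rceil}$; and a change of index shows $\sum_n\psi(n)^{-1}<\infty$ (up to the irrelevant first terms), since each $\gamma^{(v)}$ is summable. The standard coming-down comparison now applies: the expected time for a non-increasing process whose rate of decrease at level $n$ is at least $\psi(n)$ to drop from $n$ to $n/2$ is bounded by a tail of $\sum_n\psi(n)^{-1}$, hence tends to $0$ as $n\to\infty$, and combined with the monotone coupling of \thmref{PPPconstruction}(b) across finite initial data this forces $N(t)<\infty$ for all $t>0$ a.s., a contradiction. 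Hence some $v_k\in V$ has $\Lambda_{v_k}$ not coming down from infinity.

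Part (c) carries the real content. Build the directed graph $H$ on $V$ with an edge $i\to j$ exactly when $M_{ij}$ is $\Lambda_i$-strong in the sense of \defref{lambdastrong} (there are no reproduction measures), and let $R\subseteq V$ be the set of vertices reachable in $H$ from $\{v:X_v(0)=\infty\}$, that set included. If some $w\in R$ has $\Lambda_w$ not coming down, a directed $H$-path from an initially infinite site $v_1$ to $v_k:=w$ gives exactly the sites $v_2,\dots,v_{k-1}$ required in (c) (and $v_1,v_k$ also witness (a), (b)), and we are done. Otherwise every site of $R$ has a coalescent that comes down; since $R$ is closed under $H$-reachability, every $M_{iv}$ with $i\in R$, $v\notin R$ fails to be $\Lambda_i$-strong, and no site outside $R$ begins with infinitely many particles. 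Using the dichotomy that a non-$\Lambda_i$-strong migration measure transmits, almost surely, only finitely many particles — and that this persists even when the source $i$ is continually replenished by strong edges within $R$ — one concludes that $V\setminus R$ holds only finitely many particles at all times, a.s. Then the particles in $R$ form an autonomous sub-system in which every coalescent comes down and which receives only a finite total input from outside, so the argument of (b) applied to $R$ (absorbing the finite external input over its finitely many arrival times) gives $\sum_{v\in R}X_v(t)<\infty$ for all $t>0$; hence $|X(t)|<\infty$ for all $t>0$, contradicting the assumption.

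The main obstacle is the technical core of (c): establishing the $0$--$1$ dichotomy for $\Lambda$-strength of a migration measure and, above all, showing that a ``weak'' migration edge still transmits only finitely many particles even when its source site is kept infinite by incoming strong migration — so that the infinity-carrying part of the system is precisely the $H$-reachable set $R$. Part (b), by contrast, is comparatively soft: the bound $\sum_v\gamma^{(v)}_{X_v(t)}\ge\psi(N(t))$ at the fullest site is insensitive to how coordinated migration (even migration moving infinitely many particles at once) has redistributed particles among the finitely many sites.
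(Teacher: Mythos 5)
Your proposal follows the paper's strategy almost step for step: argue by contraposition, dispose of (a) by monotonicity of $|X(t)|$ under no reproduction, reduce the failure of (c) to the failure of (b) by passing to the set of sites reachable from the initially infinite sites along $\Lambda$-strong edges (your $R$ is exactly the paper's $V'$), and settle the (b) case by bounding the total rate of particle loss from below. Two points of comparison are worth recording. First, in the (b) case your ``fullest site'' inequality $\sum_v\gamma^{(v)}_{X_v(t)}\geq\min_v\gamma^{(v)}_{\lceil N(t)/|V|\rceil}$ is precisely the content of the paper's \lemref{upperboundminrate}; but where you then invoke a hitting-time/renewal estimate for a non-increasing process with prescribed downward drift, the paper (\propref{comedown}, via \lemref{expchange}, \lemref{convex}, \lemref{odecomp}) applies Jensen's inequality to the convex processing speeds and an ODE comparison to get $E[|X_n(t)|]\leq w_\infty(t)<\infty$ uniformly in $n$, which yields instantaneous coming down directly and sidesteps the overshoot bookkeeping your supermartingale route would need in order to upgrade ``reaches a finite level in finite expected time'' to ``$N(t)<\infty$ for every $t>0$ a.s.''; both arguments use, as you do, that out-migration from $V'$ acts as death and only helps. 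Second, the step you candidly single out as the technical core of (c) --- that a non-$\Lambda_i$-strong edge leaving $R$ transmits only finitely many particles even when its source is kept infinite by strong edges inside $R$ --- is asserted but not proved in your proposal; you should know that the paper is no more detailed at this point, disposing of it in the single sentence that ``only finitely many particles will enter sites in $V\setminus V'$'' before restricting to $V'$ (the $0$--$1$ dichotomy for strength, by contrast, is established in the paper via Blumenthal's law in the discussion preceding \corref{eitherMorRstrong}). So your argument matches the paper's in structure and, at its one genuinely delicate point, also in its level of rigor.
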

\medskip

\noindent The rest of the paper is structured as follows. In Section 2 we give a Poisson point process construction of the coordinated particle system and prove \thmref{PPPconstruction}. Then in Section 3 we shall provide some more general conditions as compared to \cite{seedbank2} for an infinite number of particles to escape being evaporated by coalescence through migration by proving \thmref{betasuffstrong}. In order to accomplish this, we also generalize some of the results found in \cite{betasmalltime} on the small time dynamics of the number of particles. Finally, in Section 4 we prove conditions on when a coordinated particle system stays infinite where we make use of arguments similar to those found in \cite{Angel2012} to prove \thmref{comingdown}. We shall also give a partial converse by proving \thmref{converse}.
\medskip

\section{Construction of Coordinated Particle Systems}

One important construction of the $\Lambda$-coalescent that is useful in proofs in the Poisson point process construction. Now we wish to give a similar construction for the coordinated particle system. At a high level, we have Poisson point processes on $[0,\infty) \times [0,1]$ that control the proportion and time at which particles coalesce, migrate, reproduce, and die. The construction we give is based on the constructions given for the $\Lambda$-coalescent, $\Xi$-coalescent, and most notably the spatial $\Lambda$-coalescent which can be found in \cite{pitmanlambda}, \cite{xicoalescent}, and \cite{spatial} respectively. 
\bigskip

\noindent We begin by defining the state space of the coordinated particle system $(X(t))_{t \geq 0}$ on a finite set of sites $V$ to be $\mathcal{P} \coloneqq (\N_0 \cup \{\infty\})^V$. For two vectors $\pi,\pi' \in \mathcal{P}$ we shall say $\pi \leq \pi'$ if and only if $\pi(v) \leq \pi'(v)$ for all $v \in V$. Similarly, for a vector $\pi \in \mathcal{P}$ we shall define $\pi \wedge n \in \mathcal{P}$ by $(\pi \wedge n)(v) \coloneqq \pi(v) \wedge n$ for any $n \in \N$. We shall also let $|\pi|$ for $\pi \in \mathcal{P}$ denote
\begin{align*}
    |\pi| \coloneqq \sum_{v \in V} \pi(v).
\end{align*}

\noindent These definitions will aid us in defining the Poisson point process construction which will entail defining how the process acts when the starting configuration has a maximum of $n$ particles at each site, and then taking $n \to \infty$.
\medskip

\noindent Now finally we shall present some bookkeeping, introduced in \cite{CPS2021}, to keep track of where particles are able to move in the coordinated particle system depending on the migration and reproduction measures. If we have finite measures $M_{uv}$ and $R_{uv}$ on $[0,1]$ for $u,v \in V$ with total masses $m_{uv}$ and $r_{uv}$ respectively, then we can define the undirected edge set
\begin{align*}
    E \coloneqq \{(u,v) \in V \times V \colon u \neq v, \max\{m_{uv}, m_{vu}, r_{uv}, r_{vu}\} > 0\}.
\end{align*}

\noindent We shall call the graph $G = (V, E)$ the \emph{interaction graph} for our coordinated particle system. With this we are finally able to give a proof of \thmref{PPPconstruction} by giving a Poisson point process construction of the coordinated particle system.

\begin{proof}[Proof of \thmref{PPPconstruction}]
    The proof of this construction follows a similar structure to the construction for the spatial $\Lambda$-coalescent which is Theorem 1 in \cite{spatial}. First, decompose the measures $\Lambda_v$ for $v \in V$ by setting $\Lambda_v(\cdot) = c_v\delta_0(\cdot) + \Lambda_v^+(\cdot)$ where $\delta_0(\cdot)$ is the Dirac delta measure at $0$ and $\Lambda_v^+(\cdot)$ is a finite measure on $[0,1]$ with no atom at $0$. Similarly we can decompose the measures $D_v, M_{uv}$, and $R_{uv}$ as $d_v\delta_0(\cdot) + D_v^+(\cdot)$, $m_{uv}\delta_0(\cdot) + M_{uv}^+(\cdot)$, and $r_{uv}\delta_0(\cdot) + R_{uv}^+(\cdot)$ respectively. The atoms at $0$ correspond to the independent actions of the particles and the measures on $(0,1]$ correspond to the coordinated actions of the particles.
    \medskip

    \noindent Once we have decomposed the measures as above we can define the Poisson point processes which drive the actions of the particles. We shall start with all of the coordinated actions. To do this we shall first define $P_z(d\xi)$ to be the law of $\xi = (\xi_p)_{p \in \N}$ which is a random vector of independent $\operatorname{Bernoulli}(z)$ random variables. Now letting $dt$ represent Lebesgue measure we can define the following Poisson point processes.

    \begin{itemize}
        \item Let $C^{(v)}$ with $v \in V$ be a Poisson point process on $\R_+ \times [0,1] \times \{0,1\}^\N$ with intensity measure given by $dt \otimes z^{-2}\Lambda_v^+(dz) \otimes P_z(d\xi)$.

        \item Let $D^{(v)}$ with $v \in V$ be a Poisson point process on $\R_+ \times [0,1] \times \{0,1\}^\N$ with intensity measure given by $dt \otimes z^{-1}D_v^+(dz) \otimes P_z(d\xi)$.
        
        \item Let $M^{(uv)}$ with $u,v \in V$ be a Poisson point process on $\R_+ \times [0,1] \times \{0,1\}^\N$ with intensity measure given by $dt \otimes z^{-1}M_{uv}^+(dz) \otimes P_z(d\xi)$.
        
        \item Let $R^{(uv)}$ with $u,v \in V$ be a Poisson point process on $\R_+ \times [0,1] \times \{0,1\}^\N$ with intensity measure given by $dt \otimes z^{-1}R_{uv}^+(dz) \otimes P_z(d\xi)$.
    \end{itemize}
    
    \noindent Now for the independent actions of the particles we need another set of Poisson point processes which we define as follows.

    \begin{itemize}
        \item Let $C^{(v)}_{s_1,s_2}$ with $v \in V$ and $s_1<s_2 \in \N$ be a Poisson point process on $\R_+$ with intensity measure given by $c_vdt$.

        \item Let $D^{(v)}_s$ with $v \in V$ and $s \in \N$ be a Poisson point process on $\R_+$ with intensity measure given by $d_vdt$.

        \item Let $M^{(uv)}_s$  with $u,v \in V$ and $s \in \N$ be a Poisson point process on $\R_+$ with intensity measure given by $m_{uv}dt$.
        
        \item Let $R^{(uv)}_s$ with $u,v \in V$ and $s \in \N$ be a Poisson point process on $\R_+$ with intensity measure given by $r_{uv}dt$.
    \end{itemize}

    \noindent Now using the Poisson point processes above we can begin to construct the process $(X_\pi(t))_{t \geq 0}$ started from $\pi \in \mathcal{P}$. To do this we shall first define $\pi_n \coloneqq  \pi \wedge n $ which represents the restriction to a subset of the particles. Now we can construct the processes $(X_n(t))_{t\geq 0}$ for every $n$, with initial state given by $X_n(0) = \pi_n$, by defining the interactions at an arrival of an atom in the Poisson point processes defined above. We shall begin with the coordinated interactions of the particles.

    \begin{itemize}
        \item If $(t, z, \xi)$ is an atom in $C^{(v)}$ then we merge all of the particles at site $v$ such that $\xi_p = 1$ into a single particle. To do this we first let $N = (X_n(t-))(v)$. Then we define the quantity $K = \sum_{p = 1}^N \xi_p$ which corresponds to the number of particles at site $v$ that were chosen to participate in the coalescence event. If $K > 0$ then we define
        \begin{align*}
            X_n(t) \coloneqq X_n(t-) - (K - 1)e_v.
        \end{align*}

    \end{itemize}

    \noindent The rest of the interactions follow a similar pattern where on arrival of an atom in the correct Poisson point process we flip a coin for each of the particles present at the corresponding site and perform the correct action, but we shall write them here for completeness.
    
    \begin{itemize}

        \item If $(t, z, \xi)$ is an atom in $D^{(v)}$ then we remove all of the particles at site $v$ such that $\xi_p = 1$. Letting $N = (X_n(t-))(v)$ and $K = \sum_{p = 1}^N \xi_p$, we can define
        \begin{align*}
            X_n(t) \coloneqq X_n(t-) - Ke_v.
        \end{align*}

        \item If $(t, z, \xi)$ is an atom in $M^{(uv)}$ then all of the particles at site $u$ such that $\xi_p = 1$ migrate to site $v$. Letting $N = (X_n(t-))(u)$ and $K = \sum_{p = 1}^N \xi_p$, we can define
        \begin{align*}
             X_n(t) \coloneqq X_n(t-) - Ke_u + Ke_v.
        \end{align*}

        \item If $(t, z, \xi)$ is an atom in $R^{(uv)}$ then all of the particles at site $u$ such that $\xi_p = 1$ reproduce and the offspring are placed at site $v$. Letting $N = (X_n(t-))(u)$ and $K = \sum_{p = 1}^N \xi_p$, we can define
        \begin{align*}
            X_n(t) \coloneqq X_n(t-)+ Ke_v.
        \end{align*}
    \end{itemize}

    \noindent Now we also need to define the independent actions of the particles to finish defining the process $(X_n(t))_{t \geq 0}$ which follows again a similar pattern to the coordinated actions described above. We define these independent actions as follows.

    \begin{itemize}
        \item If $t$ is an atom in $C^{(v)}_{s_1,s_2}$ and $\max\{s_1,s_2\} \leq (X_n(t-))(v)$ then these two particles coalesce into a single particle. We can define
        \begin{align*}
            X_n(t) \coloneqq X_n(t-) - e_v.
        \end{align*}

        \item If $t$ is an atom in $D^{(v)}_{s}$ and $s \leq (X_n(t-))(v)$ then we remove the particle from the system representing the death of that particle. We have
        \begin{align*}
            X_n(t) \coloneqq X_n(t-) - e_v.
        \end{align*}

        \item If $t$ is an atom in $M^{(uv)}_{s}$ and $s \leq (X_n(t-))(u)$ then the particle labeled $s$ migrates from site $u$ to site $v$. We define
        \begin{align*}
            X_n(t) \coloneqq X_n(t-) - e_u + e_v.
        \end{align*}

        \item If $t$ is an atom in $R^{(uv)}_{s}$ and $s \leq (X_n(t-))(u)$ then the particle reproduces producing a single offspring and the offspring is placed at site $v$. We define
        \begin{align*}
            X_n(t) \coloneqq X_n(t-) + e_v.
        \end{align*}
    \end{itemize}

    \noindent Now we need to check that this description of transitions makes $(X_n(t))_{t \geq 0}$ a well defined Markov process taking values in $\N_0^V$. We can can see that if there are only $N \in \N$ particles in the system then the rate of points $(t,z,\xi)$ with $\sum_{p=1}^N \xi_p \geq 1$ in each of the Poisson point processes $D^{(v)}, M^{(uv)}, R^{(uv)}$ for $u,v \in V$ is finite. We also see that the rate of points $(t,z,\xi)$ in $C^{(v)}$ with $\sum_{p=1}^N \xi_p \geq 2$ is finite as well. Similarly, the total rate of the independent actions is finite. Now we just need to verify that $(X_n(t))_{t\geq 0}$ almost surely has a finite number of particles for all $t \geq 0$ if $|V| < \infty$. This is accomplished by the exact same stochastic domination argument that appears in Lemma 2.4 of \cite{CPS2021}. We are able to couple the process $(X_n(t))_{t\geq 0}$ to a process $(Y(t))_{t \geq 0}$ on one site which has only reproduction at rate
    \begin{align*}
        R = \sum_{u,v \in V} R_{uv}([0,1]),
    \end{align*}

    \noindent in such a way such that $P(|X_n(t)| \leq Y(t)\;\; \forall t \geq 0) = 1$. Now Example 2.3 of \cite{CPS2021} shows that $E[Y(t)] = e^{Rt} < \infty$ which proves our claim. This makes $(X_n(t))_{t \geq 0}$ a well defined c\`adl\`ag Markov process. We shall also note that this method of construction of $(X_n(t))_{t \geq 0}$ gives us part (c) of \thmref{PPPconstruction} as the rates described in \eqref{eq:rates} are precisely the rates obtained from the Poisson point processes.
    \medskip
    
    \noindent Now to finish proving part (a) of \thmref{PPPconstruction}, we note that by construction using the Poisson point processes, the processes $(X_n(t))_{t \geq 0}$ are consistent in the sense that if $n \leq m$ then $X_n(t) \leq X_m(t)$ for all $t \geq 0$. This is because at time $0$, we have $X_n(0) \leq X_m(0)$ and at the time $t^*$ of any coalescence, death, or migration event $(X_m(t))_{t\geq 0}$ can lose at most the same number of particles at the site $v$ as $(X_n(t))_{t\geq 0}$ plus possibly $(X_m(t^*-))(v) - (X_n(t^*-))(v)$ more. Similarly, at any reproduction event $(X_m(t))_{t\geq 0}$ will add more particles than $(X_n(t))_{t\geq 0}$. This thus means that $X_n(t) \leq X_m(t)$ for all $t \geq 0$. Since these are all consistent we can define $(X_\pi(t))_{t \geq 0}$ by taking $X_\pi(t) \coloneqq \lim_{n \to \infty} X_n(t)$ which means $(X_\pi(t))_{t \geq 0}$ is a well defined c\`adl\`ag Markov process with $X_\pi(0) = \pi$. This proves part (a) of \thmref{PPPconstruction}.
    \medskip

    \noindent Finally, we shall prove part (b) of \thmref{PPPconstruction}. Suppose that $\pi_1, \pi_2 \in \mathcal{P}$ such that $\pi_1 \leq \pi_2$. This means that for all $n \in \N$ we have $\pi_1 \wedge n \leq \pi_2 \wedge n$. Now by the same argument that we discussed above for the consistency, for all $t \geq 0$ we have $X_{\pi_1 \wedge n}(t) \leq X_{\pi_2 \wedge n}(t)$. This means
    \begin{align*}
        X_{\pi_1}(t) = \lim_{n \to \infty} X_{\pi_1 \wedge n}(t) \leq \lim_{n \to \infty} X_{\pi_2 \wedge n}(t) = X_{\pi_2}(t),
    \end{align*}

    \noindent which shows part (b) as desired.
\end{proof}

\noindent For a coordinated particle system it is important to know what is the rate at which $k$ particles interact when there are a total of $b$ particles at a site $u \in V$. To quantify this, we will make the following definition.

\begin{defi}\label{D:rates}
    For a coordinated particle system that has $b$ particles at site $u \in V$, the rates at which $k$ of these particles coalesce, die, migrate to site $v \in V$, or reproduce at site $v \in V$ are
    \begin{align*}
        \lambda_{b,k}^{(v)} &= \int_{0}^1 z^k(1-z)^{b-k} \frac{\Lambda_v(dz)}{z^2},\\
        d_{b,k}^{(v)} &= \int_{0}^1 z^k(1-z)^{b-k} \frac{D_v(dz)}{z},\\
        m_{b,k}^{(u,v)} &= \int_{0}^1 z^k(1-z)^{b-k} \frac{M_{uv}(dz)}{z},\\
        r_{b,k}^{(u,v)} &= \int_{0}^1 z^k(1-z)^{b-k} \frac{R_{uv}(dz)}{z},
    \end{align*}

    \noindent respectively. Similarly we can write the total rate of mergers at site $v \in V$ when there are $b$ particles present as
    \begin{align*}
        \lambda_b^{(v)} = \sum_{k = 2}^b \binom{b}{k} \lambda_{b,k}^{(v)}.
    \end{align*}

    \noindent Similar formulas hold for the total rates of death, migration, and reproduction and we shall denote these quantities $d_b^{(v)}, m_{b}^{(u,v)},$ and $r_{b}^{(u,v)}$ respectively. Note we may drop the superscript denoting the site if $|V| = 1$ or the site is clear from context.
\end{defi}

\section{Beta Coalescents with Death and Migration}

\noindent In this section we shall work towards proving \thmref{betasuffstrong} which answers the following question. To set up the question we shall first let $(X(t))_{t \geq 0}$ be a coordinated particle system on two sites $\{u,v\}$ which is controlled by the measures $\Lambda_u(dz)$, $D_u(dz)$, and $M_{uv}(dz)$. We shall set all other measures to be $0$. The question now is what are necessary and sufficient conditions on the migration measure for an infinite number of particles to migrate from site $u$ to site $v$. \thmref{betasuffstrong} is able to answer the question when the measure $\Lambda_u(dz)$ has sufficiently regular density near zero, i.e. $\Lambda(dz) = f(z)dz$, where for some $B > 0$ and $\alpha \in (1,2)$ we have
\begin{align*}\label{eq:density}\tag{$\star$}
    f(z) \sim Bz^{1-\alpha}, \qquad \text{ as $z \to 0$}.
\end{align*}

\noindent Note that the beta coalescent where $\Lambda = \Beta(2-\alpha, \alpha)$ satisfies the condition above. Now in order to work towards proving \thmref{betasuffstrong} we need to understand the small time dynamics of the number of particles of $(X(t))_{t \geq 0}$ at site $u$. The number of particles at site $u$ can be modeled as a $\Lambda$-coalescent with death where the death represents the particles that migrate away from site $u$. In this case, the following lemma will show us the behavior of the number of particles at site $u$ which will aid in proving \thmref{betasuffstrong}.

\begin{lemma}\label{L:percenthit}
    Let $\Lambda(dz)$ be a finite measure on $[0,1]$ with density satisfying \emph{(}\ref{eq:density}\emph{)}. Now let $(X(t))_{t \geq 0}$ be a coordinated particle system with a single site, coalescence according to $\Lambda(dz)$, death according to a finite measure $D(dz)$ on $[0,1]$, no reproduction, and an infinite number of particles present at $t = 0$. Then
    \begin{align*}
        \lim_{n \to \infty} P(X(t) = n \text{ for some $t \geq 0$}) = \alpha - 1.
    \end{align*}
\end{lemma}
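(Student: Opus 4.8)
The plan is to identify $\alpha-1$ as $1/\mu$, where $\mu$ is the limiting expected number of particles lost at a single coalescence event when the current number of particles is large, and then to extract $\lim_n P(X(t)=n\text{ for some }t)$ from a renewal-theoretic analysis of the (non-increasing) jump chain of $(X(t))_{t\geq0}$.

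First I would record the large-$b$ asymptotics of the rates. Since every coalescence or death event strictly decreases the particle count, $X$ is non-increasing, so its range $\mathcal R:=\{X(t):t\geq0\}\setminus\{\infty\}$ is a random decreasing integer sequence and $P(X(t)=n\text{ for some }t)=P(n\in\mathcal R)=:r(n)$. For $\Lambda(dz)=f(z)\,dz$ satisfying (\ref{eq:density}), a Beta-integral computation (as in \cite{betasmalltime}) gives $\binom{b}{k}\lambda_{b,k}\sim B\,b!\,\Gamma(k-\alpha)/(k!\,\Gamma(b+1-\alpha))$ for each fixed $k\geq 2$, hence $\lambda_b\sim BS_\alpha b^\alpha$ with $S_\alpha=\sum_{k\geq2}\Gamma(k-\alpha)/k!=\Gamma(2-\alpha)/\alpha$, and, conditionally on the first event at a state with $b$ particles being a coalescence that merges $k$ particles, the probability converges as $b\to\infty$ to $q(k)=\Gamma(k-\alpha)/(k!\,S_\alpha)$. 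Writing $J$ for the corresponding decrease ($J=k-1$ with probability $q(k)$), standard gamma identities give
\[
\mu:=E[J]=\sum_{k\geq 2}(k-1)q(k)=\frac{1}{\alpha-1}.
\]
Second, death is asymptotically negligible near the top: from $d_b=\int_0^1 z^{-1}\bigl(1-(1-z)^b\bigr)D(dz)$ and $1-(1-z)^b\leq 1\wedge bz$ one gets $d_b=O(b)=o(\lambda_b)$, so the probability that the first event from a state with $b$ particles is a death tends to $0$; death events of size one (arising only from an atom of $D$ at $0$) can never cause $X$ to skip a value, so they cannot lower hitting probabilities, while size-$\geq 2$ deaths have vanishing conditional probability. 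Consequently the jump-size law $p_b(\cdot)$ from a state with $b$ particles converges to $\tilde p(j):=q(j+1)$, $j\geq1$, which is aperiodic (it charges $1$ and $2$) with mean $\mu=1/(\alpha-1)$.

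The crux is the renewal step. Conditioning on the value $n+j$ immediately preceding $n$ in the decreasing sequence $\mathcal R$ and using the strong Markov property of $(X(t))$ at the hitting time of that value yields the exact (time-reversed) renewal equation
\[
r(n)=\sum_{j\geq 1}r(n+j)\,p_{n+j}(j),\qquad 0\leq r\leq 1 ,
\]
for a renewal chain whose step law converges to $\tilde p$. The renewal theorem for such asymptotically homogeneous renewal sequences gives $r(n)\to c$, and to identify $c$ I would use the companion first-moment computation $\sum_{m=n}^{2n}r(m)=E[A_n]$ with $A_n:=\#\bigl(\mathcal R\cap[n,2n]\bigr)$: here $A_n$ is the number of jumps $X$ makes while in $[n,2n]$, whose total displacement is $n(1+o(1))$ by renewal-overshoot estimates (the straddling jump at a far level has tail index $\alpha-1<1$, so its truncation at scale $n$ contributes only $O(n^{2-\alpha})$), and a Wald-type identity — using that each jump has conditional mean $\mu(1+o(1))$ uniformly over $[n,2n]$ by the previous paragraph — gives $E[A_n]=(n/\mu)(1+o(1))=(\alpha-1)n(1+o(1))$. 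Combining, $c=\lim_n \tfrac1n\sum_{m=n}^{2n}r(m)=\alpha-1$.

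The main obstacle is the convergence $r(n)\to c$ in the third step. The per-step discrepancy $\|p_b-\tilde p\|_1$ is $o(1)$, but its rate is governed by how fast $f(z)/(Bz^{1-\alpha})\to 1$ and need not be summable in $b$, so a naive coupling of the jump chain to a homogeneous renewal process over the $\Theta(n)$ steps needed to cross $[n,2n]$ does not close. I expect to handle this either by invoking a renewal theorem requiring only weak convergence of the step laws together with uniform integrability of $J$ (a ``perturbed'' or Markov-renewal theorem), or by routing the argument through the refined small-time asymptotics $X(t)\sim c_\alpha t^{-1/(\alpha-1)}$ for $\Lambda$-coalescents with density (\ref{eq:density}) — a generalization of the results of \cite{betasmalltime} to include death — together with enough fluctuation control to pin the limit down via the consistency relation $BS_\alpha c_\alpha^{\alpha-1}=1$. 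The remaining points are routine: the coalescent with density (\ref{eq:density}) comes down from infinity since $\gamma_b\sim \mu BS_\alpha b^\alpha$ with $\alpha>1$ (cf. \cite{lamdacomedown}), so $r(n)$ is well defined, and the strong Markov property used above is available by \thmref{PPPconstruction}.
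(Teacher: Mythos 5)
Your setup — non-increasing jump chain, renewal equation $r(n)=\sum_{j\geq1}r(n+j)p_{n+j}(j)$, death negligible since $d_b=O(b)=o(\lambda_b)$, limiting step law with mean $1/(\alpha-1)$, and Blackwell to identify the constant — is exactly the skeleton of the paper's argument (cf.\ \lemref{asymptoticdeathrate}, \corref{convergeindist} and the random variable $\zeta$). But the step you flag as ``the main obstacle,'' namely that the step laws converge only weakly and without a summable rate so that a naive coupling over the $\Theta(n)$ jumps needed to traverse $[n,2n]$ does not close, is precisely the content of the lemma, and neither of your two proposed escape routes is carried out or clearly viable as stated. A black-box ``perturbed renewal theorem'' under weak convergence plus uniform integrability is not something you can simply cite here; and the second route via the small-time asymptotics $X(t)\sim c_\alpha t^{-1/(\alpha-1)}$ cannot work on its own, because an LLN-type statement about the order of magnitude of $X(t)$ says nothing about the probability of hitting an \emph{exact} lattice level $n$ — that is an intrinsically local (renewal-density) statement, which is what is being proved. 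Your Ces\`aro/Wald identification of the constant $c=\alpha-1$ is fine as a heuristic, but it presupposes that the limit $r(n)\to c$ exists, which is the whole difficulty.

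The paper closes this gap with a two-stage localization that your proposal is missing. First, \lemref{withinrange} shows that for every $\varepsilon>0$ there is a \emph{fixed} window length $L$ such that the process visits $\{k,\dots,k+\ell\}$ with probability at least $1-\varepsilon$ whenever $\ell\geq L$, uniformly in $k$ and in the starting level; this requires the quantitative heavy-tail bounds $\zeta_{n,k}\leq Ck^{-1-\alpha}$ for $n/2\leq k\leq n$ (\lemref{lambdaregularity}) and a separate Markov-inequality bound on large death events (\lemref{largedeath}), plus a truncation/coupling step to remove the regularity hypotheses on $f$ and $D$. Second, \lemref{probhitfromfar} conditions on the entry point into the window $[k+(m-1)L,k+mL]$ and couples the jump chain to the i.i.d.\ $\zeta$-walk only over the resulting \emph{bounded} number $mL$ of levels, where the total-variation discrepancy from \corref{convergeindist} can be made uniformly small for $k$ large. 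This finite-horizon coupling is exactly what replaces the unavailable global renewal theorem, and it is the idea your proof would need to supply to be complete.
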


\noindent We see that this theorem is a slight generalization of Theorem 1.8 in \cite{betasmalltime} where we now consider death along with coalescence. Heuristically, one should expect this theorem to be true because the total rate of death when there are $n$ particles present is dominated by the total rate of coalescence when $n$ is large. This can made rigorous by the following lemma since it is known that for coalescents with density satisfying (\ref{eq:density}) around $0$ we have $\lambda_n = O(n^\alpha)$ which is shown by Lemma 4 in \cite{stochasticflows3}.

\begin{lemma}\label{L:asymptoticdeathrate}
    If $d_n = \sum_{k = 1}^n \binom{n}{k} d_{n,k}$ is the total rate of death when $n$ blocks are present, then for all $\alpha \in (1,2)$ we have $n^{-\alpha}d_n \to 0$ as $n \to \infty$.
\end{lemma}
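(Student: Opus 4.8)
The plan is to bound $d_n$ explicitly in terms of the total mass of $D$ and the parameter $n$, and to show the resulting bound is $o(n^\alpha)$. Recall $d_n = \sum_{k=1}^n \binom{n}{k} d_{n,k}$ where $d_{n,k} = \int_0^1 z^k(1-z)^{n-k}\,z^{-1}D(dz)$. Interchanging the finite sum and the integral, and using the binomial theorem $\sum_{k=0}^n \binom{n}{k} z^k (1-z)^{n-k} = 1$, we get
\begin{align*}
    d_n = \int_0^1 \frac{1}{z}\Bigl(\sum_{k=1}^n \binom{n}{k} z^k (1-z)^{n-k}\Bigr) D(dz) = \int_0^1 \frac{1 - (1-z)^n}{z}\, D(dz).
\end{align*}
So the whole question reduces to the asymptotics of $\int_0^1 z^{-1}(1-(1-z)^n)\,D(dz)$. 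First I would note the elementary bound $1-(1-z)^n \le nz$ for $z \in [0,1]$, which gives the crude estimate $z^{-1}(1-(1-z)^n) \le n$, hence $d_n \le n\, D([0,1])$. This already shows $d_n = O(n)$, and since $\alpha > 1$ we get $n^{-\alpha} d_n \le D([0,1])\, n^{1-\alpha} \to 0$.

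That linear bound is in fact enough for the stated claim, so the argument is short. To present it cleanly I would: (1) derive the closed form $d_n = \int_0^1 z^{-1}(1 - (1-z)^n)\,D(dz)$ via Fubini (justified since everything is nonnegative and $D$ is finite), being careful that the integrand $z^{-1}(1-(1-z)^n)$ extends continuously to $z = 0$ with value $n$, so there is no issue at the left endpoint; (2) apply $1 - (1-z)^n \le nz$ to conclude $d_n \le n D([0,1])$; (3) divide by $n^\alpha$ and use $\alpha \in (1,2) \Rightarrow 1 - \alpha < 0$ to send the bound to $0$.

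There is essentially no obstacle here — the only thing to watch is the measure-theoretic bookkeeping at $z=0$ and making sure $D$ having an atom at $0$ is handled (at $z=0$ the integrand is $n$, finite, so the atom contributes $n\, D(\{0\})$, consistent with the bound). If one wanted a sharper statement one could split the integral at a cutoff $\varepsilon$ and use dominated convergence on $(\varepsilon,1]$ where $(1-z)^n \to 0$, but for the $o(n^\alpha)$ claim this refinement is unnecessary; I would only mention it in passing. Thus the proof is a one-line computation followed by the trivial inequality $1-(1-z)^n \le nz$.
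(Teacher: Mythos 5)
Your argument is correct and coincides with the paper's proof: the same binomial-theorem computation giving $d_n = \int_0^1 z^{-1}(1-(1-z)^n)\,D(dz)$, the same bound $1-(1-z)^n \le nz$ yielding $d_n \le nD([0,1])$, and the same conclusion from $\alpha > 1$. The extra remarks about the endpoint $z=0$ and a possible dominated-convergence refinement are fine but not needed.
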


\begin{proof}
    Using the definitions for $d_n$ and $d_{n,k}$, by the binomial theorem we compute that
    \begin{align*}
        d_n =  \int_{0}^1 \sum_{k = 1}^n \binom{n}{k}z^{k-1}(1-z)^{n-k}\;D(dz) = \int_0^1 z^{-1}(1 - (1-z)^n)\;D(dz).
    \end{align*}

    \noindent Now using the fact that $1 - (1-z)^n \leq nz$ on $[0,1]$ we see that
    \begin{align*}
        \int_0^1 z^{-1}(1 - (1-z)^n)\;D(dz) \leq \int_0^1 \frac{nz}{z}\;D(dz) = nD([0,1]).
    \end{align*}

    \noindent Now since $\alpha > 1$, as $n \to \infty$ we see
    \begin{align*}
        n^{-\alpha}d_n \leq n^{1-\alpha}D([0,1]) \to 0,
    \end{align*}

    \noindent as desired.
\end{proof}

\noindent Now we shall begin to work towards proving \lemref{percenthit}. The structure of this proof is very similar to the proof of Theorem 1.8 in \cite{betasmalltime} where the result was shown for the process with coalesence, but without death. We shall begin by investigating the probability that the coordinated particle system loses $k$ particles given that it currently has $n$ particles. Since the only ways to lose $k$ particles in its next transition are to have a coalescence event involving $k+1$ particles, or to have a death event involving $k$ particles, it is clear that this probability will be given by
\begin{align*}
    \zeta_{n,k} = \frac{\binom{n}{k+1} \lambda_{n,k+1} + \binom{n}{k}d_{n,k}}{\lambda_n + d_n}, \qquad k \leq n.
\end{align*}

\noindent From this definition of $\zeta_{n,k}$ we can observe the following limiting behavior for these probabilities.

\begin{corollary}\label{C:convergeindist}
    When $k \leq n-1$, as $n \to \infty$ we have that
    \begin{align*}
        \lim_{n \to \infty} \zeta_{n,k} = \frac{\alpha\Gamma(k + 1 - \alpha)}{(k+1)!\Gamma(2-\alpha)}.
    \end{align*}

    \noindent On the other hand, when $k = n$ we see $\zeta_{n,k} \to 0$.
\end{corollary}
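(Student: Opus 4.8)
The plan is to read off $\lim_{n}\zeta_{n,k}$ from separate asymptotics for the numerator and denominator, with $k$ fixed and $n\to\infty$. The point is that $\lambda_n$ and $\binom{n}{k+1}\lambda_{n,k+1}$ each grow like a constant multiple of $n^\alpha$, whereas \lemref{asymptoticdeathrate} gives $\binom{n}{k}d_{n,k}\le d_n = o(n^\alpha)$, so the death terms in $\zeta_{n,k}$ do not affect the limit and it suffices to understand $\binom{n}{k+1}\lambda_{n,k+1}/\lambda_n$.

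The first step is to show $n^{-\alpha}\binom{n}{k+1}\lambda_{n,k+1}\to B\,\Gamma(k+1-\alpha)/(k+1)!$. Using $\lambda_{n,k+1} = \int_0^1 z^{k-1}(1-z)^{n-k-1}f(z)\,dz$, I would split off the range $z\in[\delta,1]$, whose contribution is exponentially small, and on $[0,\delta]$ substitute $z = x/n$: the substitution produces a prefactor $n^{-k}$, and since $f(x/n)\sim Bn^{\alpha-1}x^{1-\alpha}$ the remaining integral converges, by dominated convergence with majorant $Cx^{k-\alpha}e^{-x/2}$ (integrable since $k\ge 1$ forces $k-\alpha>-1$), to $\int_0^\infty x^{k-\alpha}e^{-x}\,dx = \Gamma(k+1-\alpha)$. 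Hence $\lambda_{n,k+1}\sim B\,n^{\alpha-1-k}\Gamma(k+1-\alpha)$, and multiplying by $\binom{n}{k+1}\sim n^{k+1}/(k+1)!$ gives the claim. The second step is the analogue $n^{-\alpha}\lambda_n\to B\,\Gamma(2-\alpha)/\alpha$: using the binomial identity $\lambda_n = \int_0^1 z^{-2}(1-(1-z)^n-nz(1-z)^{n-1})f(z)\,dz$ and the same rescaling, the limit becomes $B\int_0^\infty x^{-1-\alpha}(1-e^{-x}-xe^{-x})\,dx$, the domination now coming from the elementary fact that $1-(1-z)^n-nz(1-z)^{n-1}$ is bounded by both $\binom{n}{2}z^2$ and $1$. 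An integration by parts (the derivative of $1-e^{-x}-xe^{-x}$ is $xe^{-x}$, and the boundary terms vanish because this function is $O(x^2)$ near $0$ and bounded near $\infty$) evaluates that integral to $\Gamma(2-\alpha)/\alpha$. Dividing the two asymptotics, the constant $B$ cancels and $\zeta_{n,k}\to\frac{\alpha\Gamma(k+1-\alpha)}{(k+1)!\Gamma(2-\alpha)}$.

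For the boundary case $k=n$: losing $n$ particles in a single transition cannot happen by coalescence (this would require $n+1$ blocks), so $\binom{n}{n+1}\lambda_{n,n+1}=0$ and $\zeta_{n,n} = d_{n,n}/(\lambda_n+d_n)\le d_{n,n}/\lambda_n$. Since $d_{n,n} = \int_0^1 z^{n-1}\,D(dz)\le D([0,1])$ stays bounded while $\lambda_n\to\infty$ by the second step, we get $\zeta_{n,n}\to 0$.

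The step I expect to be the main obstacle is the bookkeeping in the two rescaling arguments: producing integrable majorants that are uniform in $n$, in particular handling the range of $z$ bounded away from $0$, where only $f\in L^1$ is available rather than the regular-variation estimate, and arranging the order of the $\delta\to 0$ and $n\to\infty$ limits so that the error coming from $f(z)/(Bz^{1-\alpha})-1$ is absorbed. The explicit evaluation $\int_0^\infty x^{-1-\alpha}(1-e^{-x}-xe^{-x})\,dx = \Gamma(2-\alpha)/\alpha$ is routine, but it matters that this constant is exactly right, since it is what makes the limit a genuine probability weight once the $B$'s cancel. If the $D=0$ case of this computation is recorded in \cite{betasmalltime}, one could instead cite it directly and only run the perturbation argument via \lemref{asymptoticdeathrate}, which would shorten the proof considerably.
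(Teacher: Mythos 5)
Your proposal is correct and structurally identical to the paper's proof: the death terms are disposed of exactly as you do, via \lemref{asymptoticdeathrate}, and the paper simply cites Lemma 4 of \cite{stochasticflows3} for both $\lambda_n = O(n^\alpha)$ and the ratio limit $\binom{n}{k+1}\lambda_{n,k+1}/\lambda_n \to \alpha\Gamma(k+1-\alpha)/\bigl((k+1)!\,\Gamma(2-\alpha)\bigr)$, which is precisely the shortcut you anticipate in your final sentence. Your rescaling derivation of that ratio (including the evaluation $\int_0^\infty x^{-1-\alpha}(1-e^{-x}-xe^{-x})\,dx = \Gamma(2-\alpha)/\alpha$) is correct but rendered unnecessary by the citation, and your bound $d_{n,n}\le D([0,1])$ in the $k=n$ case is, if anything, sharper than the paper's appeal to $d_n = o(n^\alpha)$.
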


\begin{proof}
    From Lemma 4 in \cite{stochasticflows3} we have $\lambda_n = O(n^\alpha)$ and when $k \leq n-1$
    \begin{align*}
        \lim_{n \to \infty}\frac{\binom{n}{k+1} \lambda_{n,k+1}}{\lambda_n} = \frac{\alpha\Gamma(k + 1 - \alpha)}{(k+1)!\Gamma(2-\alpha)}.
    \end{align*}

    \noindent The resulting limit follows in this case from an application of \lemref{asymptoticdeathrate}. When $k = n$ the result is immediate by $\binom{n}{k}d_{n,k} \leq d_n$ and \lemref{asymptoticdeathrate}.
\end{proof}

\noindent We shall define this limiting quantity by
\begin{align*}
    \zeta_k \coloneqq \frac{\alpha\Gamma(k + 1 - \alpha)}{(k+1)!\Gamma(2-\alpha)},
\end{align*}

\noindent and as in \cite{betasmalltime} we can define the random variable $\zeta$ such that $P(\zeta = k) = \zeta_k$ and $E[\zeta] = \frac{1}{\alpha - 1}$. We can view $\zeta$ as the step distribution of a random walk on the positive integers which approximates the values that the beta coalescent with death achieves on the way down from infinity. This approximation is a key step to proving \lemref{percenthit} since if $(S_n)_{n = 0}^\infty$ represents a random walk with step distribution $\zeta$ then by an application of the discrete Blackwell's Renewal Theorem we have
\begin{align*}
    \lim_{k \to \infty}P(S_n = k \text{ for some $n$}) = \frac{1}{E[\zeta]} =  \alpha -1,
\end{align*}

\noindent Now to use this approximation to prove the theorem we first need to prove the following lemma that will allow us to bound the rate $\zeta_{n,k}$ for large coalescence and death events, i.e. for all $n$ and $n/2 \leq k \leq n$. We shall quickly note that this lemma is related to Lemma 7.1 in \cite{betasmalltime} where the authors are able to bound the rate of large coalescence events in the case of no death, i.e. $d_n = 0$ and $d_{n,k} = 0$ for all $n$ and $k \leq n$.

\begin{lemma}\label{L:lambdaregularity}
    Assume that the measure $\Lambda(dz)$ that controls coalescence has density $f$ that satisfies $C_1z^{1-\alpha} \leq f(z) \leq C_2z^{1-\alpha}$ for all $z \in (0,1]$ with some constants $C_1, C_2 > 0$. Also assume the death measure $D(dz)$ is supported on $[0, 1/8]$. Then there exists $C$ such that $\zeta_{n,k} \leq Ck^{-1-\alpha}$ for all $n$ and $k$ such that $n/2 \leq k \leq n$.
\end{lemma}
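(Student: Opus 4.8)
The plan is to bound the numerator and denominator of $\zeta_{n,k}$ separately, discarding everything that is not needed. For the denominator it is enough to keep the rate of binary mergers coming from the continuous part of $\Lambda$: since every term in $\lambda_n = \sum_{j=2}^n \binom{n}{j}\lambda_{n,j}$ is non-negative, $\lambda_n + d_n \ge \binom{n}{2}\lambda_{n,2}$, and $\lambda_{n,2} = \int_0^1 (1-z)^{n-2} f(z)\,dz \ge C_1 \int_0^1 z^{1-\alpha}(1-z)^{n-2}\,dz = C_1\, B(2-\alpha,\,n-1)$. Expanding the Beta function in Gammas and simplifying $\binom{n}{2} B(2-\alpha,n-1)$ (the factors $(n-2)!$ cancel) gives $\binom{n}{2}\lambda_{n,2} \ge \frac{C_1 \Gamma(2-\alpha)}{2}\cdot\frac{n!}{\Gamma(n+1-\alpha)}$, and the standard ratio-of-Gammas asymptotic $\frac{n!}{\Gamma(n+1-\alpha)} \sim n^\alpha$ then yields $\lambda_n + d_n \ge c\,n^\alpha$ for all large $n$, with $c>0$ depending only on $C_1$ and $\alpha$.

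For the coalescence contribution to the numerator, when $k \le n-1$ I would write $\lambda_{n,k+1} = \int_0^1 z^{k-1}(1-z)^{n-k-1} f(z)\,dz \le C_2\, B(k+1-\alpha,\, n-k)$, which is legitimate because $k+1-\alpha \ge 2-\alpha > 0$; multiplying by $\binom{n}{k+1}$ and cancelling $\Gamma(n-k) = (n-k-1)!$ gives $\binom{n}{k+1}\lambda_{n,k+1} \le C_2\,\frac{n!}{\Gamma(n+1-\alpha)}\cdot\frac{\Gamma(k+1-\alpha)}{(k+1)!}$. Bounding $\frac{n!}{\Gamma(n+1-\alpha)} \le C_3\, n^\alpha$ and $\frac{\Gamma(k+1-\alpha)}{(k+1)!} \le C_4\, k^{-1-\alpha}$ — two-sided versions of the Gamma asymptotics valid uniformly for all $n,k\ge 1$ — and dividing by $\lambda_n + d_n \ge c\, n^\alpha$ shows the coalescence part of $\zeta_{n,k}$ is at most $C'\,k^{-1-\alpha}$. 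When $k=n$ this term is identically $0$ since $\binom{n}{n+1}=0$, so there is nothing to estimate. This step is the exact analogue, with death absent, of Lemma 7.1 in \cite{betasmalltime}.

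For the death contribution this is where the hypotheses on the support of $D$ and on the range of $k$ do the work. On $[0,1/8]$ we have $z\le 1/8$, so for $n/2 \le k \le n$, $\binom{n}{k} z^{k-1}(1-z)^{n-k} \le 2^n (1/8)^{k-1} \le 8\cdot 2^n\, 2^{-3n/2} = 8\cdot 2^{-n/2}$; integrating against $D$ gives $\binom{n}{k} d_{n,k} \le 8\, D([0,1])\, 2^{-n/2}$. Dividing by $c\, n^\alpha$ and using $k \le n$ (so $n^{-1-\alpha} \le k^{-1-\alpha}$), the death part of $\zeta_{n,k}$ is at most $C''\, n\, 2^{-n/2}\, k^{-1-\alpha} \le C'''\, k^{-1-\alpha}$ because $n\,2^{-n/2}$ is bounded. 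Adding the two contributions gives $\zeta_{n,k} \le C\, k^{-1-\alpha}$ for all $n$ above a fixed threshold; for the finitely many remaining pairs $(n,k)$ with $n/2 \le k \le n$ one just uses $\zeta_{n,k} \le 1$ together with the fact that $k^{-1-\alpha}$ is bounded below on that set, enlarging $C$ if needed. I expect the only real care needed is making the Gamma-ratio estimates genuinely uniform over $n/2 \le k \le n$ rather than merely asymptotic in $n$; replacing the equivalences by the two-sided bounds indicated above handles this, and is the one place where a careless argument could go wrong.
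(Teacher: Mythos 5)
Your proposal is correct and follows essentially the same route as the paper's proof: bound the coalescence part of the numerator via the Gamma-ratio estimate $\binom{n}{k+1}\lambda_{n,k+1} \leq C n^\alpha k^{-1-\alpha}$, bound the death part using the support of $D$ on $[0,1/8]$ together with $k \geq n/2$ to get exponential decay, and lower-bound the denominator by $cn^\alpha$. The only (harmless) differences are that you obtain the denominator bound from the binary-merger term $\binom{n}{2}\lambda_{n,2}$ alone rather than summing lower bounds over all merger sizes, and that you are more explicit about uniformity of the Gamma estimates and the finitely many small-$n$ cases.
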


\begin{proof}
    Following the proof of Lemma 7.1 of \cite{betasmalltime} we can first construct an upper bound for $\binom{n}{k}\lambda_{n,k}$ by seeing
    \begin{align*}
        \binom{n}{k}\lambda_{n,k} \leq C_2\binom{n}{k}\int_0^1 z^{k-1-\alpha}(1-z)^{n-k}\;dz = \frac{C_2n!\Gamma(k- \alpha)}{k!\Gamma(n-\alpha+1)}\leq C_3n^\alpha k^{-1-\alpha}.
    \end{align*}

    \noindent We similarly get a lower bound $\binom{n}{k}\lambda_{n,k} \geq C_4n^\alpha k^{-1-\alpha}$ for some other constant which lets us lower bound $\lambda_n + d_n$ by seeing that there exists a constant $C_5$ such that
    \begin{align*}
        \lambda_n + d_n \geq \lambda_n = \sum_{k = 2}^n \binom{n}{k}\lambda_{n,k} \geq \sum_{k = 2}^n C_4n^\alpha k^{-1-\alpha} \geq C_5n^\alpha.
    \end{align*}

    \noindent Now we need to bound the term $\binom{n}{k}d_{n,k}$. Noting that $2k \geq n$ we can see that
    \begin{align*}
        \binom{n}{k}d_{n,k} = \binom{n}{k}\int_0^1 z^{k-1}(1-z)^{n-k}D(dz) \leq 2^n \int_0^{\frac{1}{8}} z^{k-1}D(dz) \leq 2^{2k} 2^{-3k+3}D([0,1]) \leq \frac{8D([0,1])}{2^k}.
    \end{align*}
    
    \noindent Thus 
    \begin{align*}
        \zeta_{n,k} = \frac{\binom{n}{k+1} \lambda_{n,k+1} + \binom{n}{k}d_{n,k}}{\lambda_n + d_n} \leq \frac{C_3n^\alpha k^{-1-\alpha} }{C_5n^\alpha} + \frac{8D([0,1])}{C_5n^{\alpha}2^k} \leq Ck^{-1-\alpha},
    \end{align*}

    \noindent for some constant $C > 0$ which proves the claim.
\end{proof}

\begin{lemma}\label{L:largedeath}
    Let $(X(t))_{t \geq 0}$ be a $\Lambda$-coalescent with death according to $D(dz)$ and $\Lambda(dz)$ satisfying \eqref{eq:density} with parameter $\alpha \in (1,2)$. Now if $X(s) = k+j$, then there exists a constant $C > 0$ such that the probability that the next event the process experiences is a death event that brings it below $k$ particles is bounded above by
    \begin{align*}
        \frac{C}{k^\alpha}\left(1 + \frac{k}{j} \right).
    \end{align*}

    \noindent In particular, the constant $C > 0$ is independent of $k$ and $j$.
\end{lemma}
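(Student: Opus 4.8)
The plan is to write the probability exactly and then bound its numerator and denominator separately. Condition on $X(s) = n$, where $n := k+j$; we may assume $j \geq 1$, since the claimed bound is vacuous when $j = 0$. For a $\Lambda$-coalescent with death the only possible events are coalescences (total rate $\lambda_n$) and deaths (total rate $d_n$), and a death event removing exactly $\ell$ of the $n$ particles occurs next with probability $\binom{n}{\ell} d_{n,\ell}/(\lambda_n + d_n)$; such an event brings the process below $k$ if and only if $n-\ell < k$, i.e. $\ell \geq j+1$. Hence the quantity to be bounded is
\begin{align*}
    \frac{1}{\lambda_n + d_n}\sum_{\ell = j+1}^{n} \binom{n}{\ell} d_{n,\ell}.
\end{align*}

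For the numerator I would substitute $d_{n,\ell} = \int_0^1 z^{\ell-1}(1-z)^{n-\ell}\, D(dz)$ and interchange the finite sum with the integral to get $\sum_{\ell=j+1}^n \binom{n}{\ell} d_{n,\ell} = \int_0^1 z^{-1} P\big(\mathrm{Bin}(n,z) \geq j+1\big)\, D(dz)$, where $\mathrm{Bin}(n,z)$ denotes a binomial random variable. Markov's inequality gives $P(\mathrm{Bin}(n,z) \geq j+1) \leq nz/(j+1)$, so the numerator is at most $\frac{n}{j+1} D([0,1]) \leq \big(1 + \tfrac{k}{j}\big) D([0,1])$, using $n = k+j$ and $j \geq 1$. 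For the denominator I would invoke the standard fact that, under \eqref{eq:density}, there is a constant $c > 0$ with $\lambda_n \geq c\, n^\alpha$ for all $n \geq 2$: this follows from the regular variation of $\lambda_n$ recorded in \cite{stochasticflows3}, or directly by restricting the integral defining $\lambda_n$ to a neighbourhood $(0,\delta]$ of $0$ on which $f(z) \geq \tfrac{B}{2} z^{1-\alpha}$, exactly as in the lower bound on $\binom{n}{k}\lambda_{n,k}$ in the proof of \lemref{lambdaregularity}. Since $d_n \geq 0$ and $n \geq k$, this yields $\lambda_n + d_n \geq c\, k^\alpha$.

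Combining the two estimates, the probability is at most $\big(1 + \tfrac{k}{j}\big) D([0,1]) \big/ \big(c\, k^\alpha\big) = \frac{C}{k^\alpha}\big(1 + \tfrac{k}{j}\big)$ with $C := D([0,1])/c$, which is independent of $k$ and $j$, as claimed. The only ingredient that is not completely elementary is the lower bound $\lambda_n \gtrsim n^\alpha$; everything else is the Markov inequality applied to the binomial together with arithmetic, so that is where I expect the (minimal) difficulty to lie.
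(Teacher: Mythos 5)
Your proposal is correct and follows essentially the same route as the paper: write the probability as $\big(\lambda_{k+j}+d_{k+j}\big)^{-1}\int_0^1 P(\mathrm{Bin}(k+j,z) > j)\,z^{-1}D(dz)$, apply Markov's inequality to the binomial tail, and lower-bound the denominator via $\lambda_n \gtrsim n^{\alpha}$. If anything you are slightly more careful than the paper, which cites only $\lambda_n = O(n^\alpha)$ where the lower bound $\lambda_n \geq c\,n^\alpha$ is what is actually needed; your explicit justification of that bound (and your remark on the vacuous $j=0$ case) tidies this up.
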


\begin{proof}
   Note that the total rate of any coalescence or death events occurring when $X(s) = k+j$ is $\lambda_{k+j} +d_{k+j}$. Thus the probability that the next event that occurs is a death event involving at least $j + 1$ particles is
   \begin{align*}
       \sum_{\ell = j+1}^{k+j} \frac{\binom{k+j}{\ell}d_{k+j, \ell}}{\lambda_{k+j} +d_{k+j}} = \frac{1}{\lambda_{k+j} +d_{k+j}}\int_0^1\sum_{\ell = j+1}^{k+j}\binom{k+j}{\ell}z^{\ell-1}(1-z)^{k+j-\ell} D(dz).
   \end{align*}

   \noindent Now letting $B_{k+j,z}$ denote a binomial random variable with parameters $n= k+j$ and $p=z$, we can rewrite the integral as
   \begin{align}\label{eq:integralofbinom}
        \frac{1}{\lambda_{k+j} +d_{k+j}}\int_0^1 P(B_{k+j,z} > j)  z^{-1}D(dz).
   \end{align}

   \noindent To obtain an upper bound for this integral we can apply Markov's inequality to see that
   \begin{align*}
        P(B_{k+j,z} > j) \leq \frac{E[B_{k+j, z}]}{j} = \frac{z(k+j)}{j}.
   \end{align*}

   \noindent Plugging this bound back into \eqref{eq:integralofbinom} we see that
   \begin{align*}
       \frac{1}{\lambda_{k+j} +d_{k+j}}\int_0^1 P(B_{k+j,z} > j)  z^{-1}D(dz) \leq \frac{1}{\lambda_{k+j} +d_{k+j}}\int_0^1\frac{k+j}{j}D(dz) \leq \frac{D([0,1])}{\lambda_{k+j}}\left( \frac{k+j}{j}\right).
   \end{align*}
   
   \noindent Now noting that $\lambda_{k + j} = O((k+j)^\alpha)$ we have shown that there exists a constant $C > 0$ independent of $k$ and $j$ such that 
   \begin{align*}
       \frac{D([0,1])}{\lambda_{k+j}}\left( \frac{k+j}{j}\right) \leq \frac{C}{k^\alpha}\left(1 + \frac{k}{j}\right).
   \end{align*}

   \noindent 
\end{proof}

\noindent Now this bound on the $\zeta_{n,k}$ will allow us to show that with high probability the coordinated particle system $(X(t))_{t \geq 0}$ on the single site with coalescence and death will land in an interval $[a,b]$ for some time $t$ given that the size of the interval is sufficiently large. More precisely, we have the following lemma.

\begin{lemma}\label{L:withinrange}
    Define the event $V_{k, k+\ell}^n = \{X^n(t) \in \{k, k+1, \ldots, k+ \ell\} \text{ for some $t$}\}$ where $(X^n(t))_{t \geq 0}$ is the coalescent with death started with $n$ particles and the coalescent measure $\Lambda(dz)$ has density satisfying \emph{(}\ref{eq:density}\emph{)}. For all $\varepsilon > 0$ there exists $L \in \N$ such that $P(V_{k, k+\ell}^n) \geq 1 - \varepsilon$ if $\ell \geq L$ and $n \geq k + \ell$.
\end{lemma}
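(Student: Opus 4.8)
The plan is to show that the only way the coalescent with death can avoid ever landing in the window $\{k, k+1, \ldots, k+\ell\}$ is to ``jump over'' it, i.e.\ to make a transition from some state $k+\ell+j$ with $j \geq 1$ directly to a state strictly below $k$; and that by choosing $\ell$ large (uniformly in $k$ and $n$) the total probability of such an overshoot can be made as small as $\varepsilon$. The key point is that we only need to control what happens once the process first enters the region $[k, n]$ from above, and in fact once it first drops to or below $k+\ell$; before that it is irrelevant. So the real object to estimate is: starting from a state $N \in \{k+1, \ldots, k+\ell\}$ or having just arrived at some state $\leq k+\ell$, what is the chance the chain ever skips past the whole block $\{k, \ldots, k+\ell\}$ in one step?

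First I would set up a first-passage decomposition. Let $\tau$ be the first time $X^n(t) \leq k+\ell$. Since the process is monotone decreasing in steps (no reproduction), $X^n(\tau) \in \{k, \ldots, k+\ell\}$ unless the transition landing at time $\tau$ skipped the block, i.e.\ unless from some state $k+\ell+j$ (with $j\ge1$) the process dropped below $k$. If it does land in the block, we are done and $V^n_{k,k+\ell}$ occurs; otherwise, from a state in $\{k+1,\dots,k+\ell\}$ at some later time we must again ask whether the next downward move skips below $k$, and iterate. The total failure probability is therefore bounded by a sum, over each ``visit'' to the region above $k$, of the probability that that visit ends in a block-skipping jump. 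Concretely, the probability of ever skipping the block $\{k,\dots,k+\ell\}$ is at most $\sum_{j \geq 1} P(\text{process visits } k+\ell+j \text{ and its next move goes below } k)$ plus the contribution from moves out of states inside the block.

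Second, I would bound each such block-skipping probability. A transition from a state $m \geq k$ to a state below $k$ must lose at least $m-k+1 \geq \ell+1$ particles (when $m = k+\ell+j$, it loses at least $\ell+j+1 \ge \ell+1$). Losing that many particles in one step is either a coalescence event involving at least $\ell+2$ of the $m$ particles, or a death event involving at least $\ell+1$ of them. For the coalescence part, \lemref{lambdaregularity} (after the standard reduction that the tail of $\Lambda$ near $1$ contributes negligibly, and splitting off the part of $D$ supported near $0$) gives $\zeta_{m,p} \leq C p^{-1-\alpha}$ for $p \geq m/2$, and a cruder bound $\zeta_{m,p}$ summed over $m/2 > p$ reduces to the behavior of the step distribution $\zeta$, which has $\sum_{p \geq L} \zeta_p = O(L^{1-\alpha}) \to 0$ as $L \to \infty$ since $E[\zeta] < \infty$ (here one uses $\zeta_k \sim c k^{-1-\alpha}$ from \corref{convergeindist}). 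For the death part, \lemref{largedeath} says that from a state $k+\ell+j$ the chance the next move is a death event dropping below $k$ is at most $\tfrac{C}{k^\alpha}(1 + k/(\ell+j))$. Summing the death bound over $j \geq 1$: if $k \geq \ell$ this is at most $\tfrac{C}{k^\alpha}\big(k + k\log(k)\big)$-type expression which is $O(k^{1-\alpha}\log k)$ and hence small for $k$ large; but I need uniformity in $k$ including small $k$, so instead I sum $\sum_{j\ge1}\tfrac{C}{k^\alpha}(1+k/(\ell+j))$ differently — using $\sum_{j \ge 1} \tfrac{1}{\ell + j} \le \sum_{j\ge 1}\tfrac{1}{\ell+j}$ and noting that for the death event to skip below $k$ at all we in fact need $\ell+j$ particles to die, so an additional factor forces $j$ not too large relative to what the $D$-measure can produce; more robustly, I would combine the death bound with the geometric bound $\binom{m}{p} d_{m,p} \leq 8 D([0,1]) 2^{-p}$ from the proof of \lemref{lambdaregularity} for $p \geq m/2$, giving an exponentially small contribution summable to $O(2^{-\ell})$, and use \lemref{largedeath} only in the regime $p < m/2$, where $\ell + j < (k+\ell+j)/2$ forces $j < k+\ell$, keeping the sum over $j$ finite and of size $O(k^{1-\alpha}(\log k + 1))$, uniformly bounded over $k \geq 1$ and $\to 0$ as...

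\medskip

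Actually, cleaner: the block-skipping probability from \emph{any} state $m \ge k$ is at most (coalescence losing $\ge \ell+1$) $+$ (death losing $\ge \ell+1$), and aggregating over all visits the process makes to $[k,\infty)$ before absorption, a standard optional-stopping/strong-Markov argument shows the total failure probability is bounded by $\sup_{m \ge k}\big[\text{one-step skip prob from } m\big]$ times the expected number of visits, which is itself $O(1)$ because the step distribution $\zeta$ has finite mean. Combining the bounds above, the one-step skip probability is $\leq C \ell^{1-\alpha} + C 2^{-\ell} + C' k^{1-\alpha}$; the first two terms go to $0$ as $\ell \to \infty$ uniformly in $k,n$, and the third is $\le C'$ for all $k$, but I need it small too. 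This is the main obstacle: making the death-skip estimate small \emph{uniformly down to $k=1$}. The resolution is that for small $k$ the window $\{1,\dots,\ell\}$ is enormous relative to $k$, so skipping below $k$ requires losing essentially all particles, which a death measure supported on $[0,1/8]$ (after the usual truncation) does only with exponentially small probability in $\ell$; I formalize this by the $2^{-p}$ bound above with $p \ge \ell+1$. So the genuine content is the bookkeeping that separates the ``$k$ large'' regime (coalescence tail $\ell^{1-\alpha}$ controls things) from the ``$k$ small'' regime (death is negligible because one must kill nearly everything). With both regimes handled, choose $L$ so that $C L^{1-\alpha} + C 2^{-L} < \varepsilon$ and the claim follows. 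I expect the delicate step to be verifying that the expected number of visits to $[k,\infty)$ (equivalently, the number of ``attempts'' to skip the block) is bounded independently of $k$ and $n$ — this follows from the renewal-theoretic picture where $(S_n)$ has step law $\zeta$ with $E[\zeta]<\infty$, but translating it into a rigorous bound for the actual coalescent (not its approximating random walk) requires comparing $\zeta_{n,k}$ to $\zeta_k$ via \corref{convergeindist} and \lemref{lambdaregularity}, which is exactly the type of argument carried out in \cite{betasmalltime}.
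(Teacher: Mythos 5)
Your overall skeleton is the same as the paper's: decompose the failure event according to the state $m > k+\ell$ from which the process overshoots the window, bound the coalescence overshoots by the $p^{-1-\alpha}$ tail estimates (\lemref{lambdaregularity} and Lemma 7.1 of \cite{betasmalltime}) and the death overshoots by \lemref{largedeath} together with the exponential bound $\binom{m}{p}d_{m,p}\leq 8D([0,1])2^{-p}$ for $p\geq m/2$. However, two steps in your write-up are genuinely problematic. First, your ``cleaner'' reformulation --- total failure probability $\leq$ (sup of one-step skip probability) $\times$ (expected number of visits to $[k,\infty)$), with the latter claimed to be $O(1)$ because $E[\zeta]<\infty$ --- is wrong: the process starts from $n$ (eventually $\infty$) particles and visits on the order of $n/E[\zeta]$ distinct levels above $k$, so the expected number of visits is unbounded in $n$. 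What actually saves the argument is not a bound on the number of visits but the summability in $m$ of the skip probability $\sum_{p\geq m-k+1}\zeta_{m,p} = O((m-k)^{-\alpha})$, which your first, more careful decomposition does capture; you should discard the shortcut and keep only that version.

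Second, and more importantly, you correctly identify the uniformity in small $k$ as the main obstacle but do not close it: your final bound still carries a $C'k^{1-\alpha}(1+\log k)$ term coming from \lemref{largedeath} (and from coalescence overshoots originating in $(k+\ell,2k]$), and your final choice of $L$ simply drops it. Your proposed fix --- that for $k$ small every overshoot must lose at least half the particles, so the exponential death bound applies --- can be made to work, but it requires the explicit case split $k<\ell$ versus $k\geq\ell$ and the observation that in the latter regime $k^{1-\alpha}\log k\leq\sup_{j\geq\ell}j^{1-\alpha}\log j\to 0$; as written the bookkeeping trails off. The paper avoids all of this with a one-line containment: for $k<L/2$ and $\ell\geq L$ one has $V^n_{L/2,\,L/2+\ell/2}\subseteq V^n_{k,k+\ell}$, so the small-$k$ case reduces immediately to the large-$k$ case already proved. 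You should either adopt that reduction or finish the two-regime estimate explicitly. Finally, you only gesture at the reduction from a density satisfying \eqref{eq:density} to one globally comparable to $z^{1-\alpha}$ with $D$ supported on $[0,1/8]$; the paper handles this by a coupling (Lemma 3.1 of \cite{betasmalltime}) valid up to a random time, plus a bound $P(X^n(u)<M)>1-\varepsilon/3$, and this step cannot be omitted since \lemref{lambdaregularity} genuinely needs the global hypotheses.
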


\begin{proof}
    The proof of this lemma follows a similar pattern to the proof of Lemma 7.2 in \cite{betasmalltime}. We start with the special case where the measures $\Lambda(dz)$ and $D(dz)$ satisfy the conditions outlined in \lemref{lambdaregularity}. To start we shall define the events $V_k^n = V_{k,k}^n = \{X^n(t) =k \text{ for some $t$}\}$. Now to bound the probability $P((V_{k, k + \ell}^n)^c)$ we can note that the coalescent process $(X(t))_{t \geq 0}$ will jump over the interval $[k, k+\ell]$ in two cases. The first case is the coalescent with death jumps from a level above $2k$ to below level $k$ which means the coalescent has lost at least half of its blocks. We shall call this event $H_1$. The second case is where the coalescent with death lands in the interval $[k,2k]$ before jumping over the interval $[k, k+\ell]$. We call this event $H_2$. Since
    \begin{align*}
        P((V_{k,k+\ell}^n)^c) = P(H_1) + P(H_2),
    \end{align*}

    \noindent it suffices to bound the probabilities of $H_1$ and $H_2$. We shall begin by bounding the probability of $H_1$. To do this we can bound $P(H_1)$ by the probability that for any $j \geq 2k + 1$ the process $(X^n(t))_{t \geq 0}$ loses at least $j - k + 1$ blocks when it has $j$ blocks, an event we shall call $B_{j,j-k+1}$. To do this we note that the probability the process loses exactly $h$ blocks when it has $j$ is $\zeta_{j,h}$. Note that since $h \geq j/2$ for all $h \geq j-k+1$, we may apply \lemref{lambdaregularity} to see that $P(B_{j,j-k+1})$ is bounded above by
    \begin{align*}
        P(B_{j,j-k+1}) = \sum_{h = j-k+1}^j \zeta_{j,h} \leq  \sum_{h = j-k+1}^\infty Ch^{-1-\alpha} \leq \int_{j-k}^\infty Cx^{-1-\alpha}\;dx = \frac{C(j-k)^{-\alpha}}{\alpha}.
    \end{align*}
    
    \noindent Now noting that $H_1$ is the event that any of the $B_{j,j-k+1}$ occur for $j \geq 2k+1$ we obtain the bound following bound where we see that
    \begin{align*}
        P(H_1) \leq \sum_{j = 2k+1}^\infty P(B_{j,j-k+1}) \leq \sum_{j = 2k+1}^\infty  \frac{C(j-k)^{-\alpha}}{\alpha} \leq \frac{C}{\alpha}\int_{2k}^\infty (x - k )^{-\alpha}\;dx = \frac{Ck^{1-\alpha}}{\alpha(\alpha - 1)}.
    \end{align*}
    
    \noindent Now we shall bound $P(H_2)$. In order for $H_2$ to occur we see that $(V_{k, k+\ell + i}^n)^c \cap V^n_{k+\ell + 1+ i}$ must have occurred for some $0 \leq i \leq k-\ell - 1$, i.e. $(X(t))_{t \geq 0}$ jumps from level $k+\ell + 1+ i \in (k+\ell, 2k]$ to below $k$ which meaning it avoids the interval $[k, k+\ell + i]$. Thus
    \begin{align*}
        P(H_2) \leq \sum_{i = 0}^{k-\ell -1} P((V_{k, k+\ell + i}^n)^c \cap V^n_{k+\ell + 1+ i}) \leq \sum_{i = 0}^{k-\ell - 1} P((V_{k, k+\ell + i}^n)^c | V^n_{k+\ell + 1+ i}).
    \end{align*}

    \noindent Now to bound each of the terms in the last sum, note that each of the events $((V_{k, k + j}^n)^c | V^n_{k + j+1})$ means either a coalescence event occurred that involved at least $j +3$ particles, or a death event occurred that involves at least $j + 2$ particles. Now we can see that Lemma 7.1 in \cite{betasmalltime} tells us that the probability of a coalescence event involving at least $j +3$ particles is
    \begin{align*}
        \sum_{m = j+3}^{k+j+1} \frac{\binom{k+j+1}{m}\lambda_{k+j+1, m}}{\lambda_{k+j+1} + d_{k+j+1}} \leq \sum_{m = j+3}^{k+j+1} \frac{\binom{k+j+1}{m}\lambda_{k+j+1, m}}{\lambda_{k+j+1}} \leq \sum_{m = j+3}^{\infty} m^{-1-\alpha} \leq \int_{j +2}^\infty x^{-1-\alpha}\;dx = \frac{(j+2)^{-\alpha}}{\alpha}.
    \end{align*}

    \noindent Now for the death event, we have upper bounded the probability that the number of particles drops from $k+j+1$ to below $k$ due to a death event in \lemref{largedeath} where the bound is
    \begin{align*}
        \frac{C}{k^\alpha}\left(1 + \frac{k}{j+1}\right).
    \end{align*}

    \noindent Combining these two results we see that we can obtain the upper bound
    \begin{align*}
        P((V_{k, k+\ell + i}^n)^c | V^n_{k+\ell + 1+ i}) \leq \frac{(\ell + i+2)^{-\alpha}}{\alpha} + \frac{C}{k^\alpha}\left(1 + \frac{k}{\ell + i+1}\right).
    \end{align*}
    
    \noindent Now we can finally use this bound to bound the term $P(H_2)$ by computing
    \begin{align*}
        P(H_2) &\leq \sum_{i = 0}^{k-\ell - 1}\left( \frac{(\ell + i+2)^{-\alpha}}{\alpha} + \frac{C}{k^\alpha}\left(1 + \frac{k}{\ell + i+1} \right)\right) \\
        &\leq \sum_{i = 0}^\infty\frac{(\ell + i+2)^{-\alpha}}{\alpha} + \frac{C}{k^\alpha}\sum_{i = 1}^k \left(1 + \frac{k}{i} \right)\\
        &\leq \int_{\ell +1}^\infty \frac{x^{-\alpha}}{\alpha}\;dx + \frac{C}{k^\alpha}\sum_{i = 1}^k \left(1 + \frac{k}{i}\right)\\
        &\leq \frac{(\ell + 1)^{1-\alpha}}{\alpha(\alpha-1)} + \frac{C}{k^\alpha}\sum_{i = 1}^k \left(1 + \frac{k}{i}\right).
    \end{align*}

    \noindent Combining this with our bound on $P(H_1)$ we see that we have shown 
    \begin{align*}
        P((V_{k,k+\ell}^n)^c) &\leq P(H_1) + P(H_2) \\
        &\leq \frac{(\ell + 1)^{1-\alpha}}{\alpha(\alpha-1)} + \frac{Ck^{1-\alpha}}{\alpha(\alpha - 1)} + \frac{C}{k^\alpha}\sum_{i = 1}^k \left(1 + \frac{k}{i} \right)\\
        &\leq \frac{(\ell + 1)^{1-\alpha}}{\alpha(\alpha-1)} + \frac{Ck^{1-\alpha}}{\alpha(\alpha - 1)} + \frac{Ck}{k^\alpha} + \frac{Ck\log k}{k^\alpha}.
    \end{align*}

    \noindent Now since $\alpha > 1$, the above goes to $0$ as $k, \ell \to \infty$. Now we shall choose $L \in \N$ such that
    \begin{align*}
        \frac{(C+1)(L/2 +1)^{1-\alpha}}{\alpha(\alpha-1)}  + \frac{C(L/2)}{(L/2)^\alpha} + \frac{C(L/2)\log (L/2)}{(L/2)^\alpha}  < \varepsilon.
    \end{align*}

    \noindent We claim that such an $L$ proves the result in this case. To see this we have that if $k \geq L/2$ and $\ell \geq L/2$ then we have
    \begin{align*}
        P((V^n_{k,k+\ell})^c) \leq P(H_1) + P(H_2) < \varepsilon.
    \end{align*}

    \noindent Now if $k < L/2$ we see that if $\ell \geq L$ then $V_{L/2, L/2 + \ell/2}^n \subseteq V^n_{k,k+\ell}$. By the first case where we had $k \geq L/2$ we know 
    \begin{align*}
        1 - \varepsilon \leq P(V_{L/2, L/2 + \ell/2}^n) \leq P(V^n_{k,k+\ell}).
    \end{align*}

    \noindent This shows the desired claim that for all $\varepsilon > 0$ there exists an $L \in \N$ such that $P(V_{k, k+\ell}^n) \geq 1 - \varepsilon$ if $\ell \geq L$ and $n \geq k + \ell$.
    \medskip

    \noindent The general case where the density of $\Lambda(dz) = f(z)dz$ satisfies \eqref{eq:density}, but does not satisfy the assumptions of \lemref{lambdaregularity} also follows as in the general case of Lemma 7.2 in \cite{betasmalltime}, but we shall repeat the argument here for completeness. Since $f(z) \sim Bx^{1-\alpha}$ as $z \to 0$ there exists $\delta > 0$ and constants $C_1, C_2 > 0$ such that $C_1z^{1-\alpha} \leq f(z) \leq C_2z^{1-\alpha}$ for all $z \in (0, \delta]$. Now we can define the measure $\Lambda_*(dz)$ by
    \begin{align*}
        \Lambda_*(dz) = (f(z)\mathds{1}_{(0, \delta]}(z) + x^{1-\alpha}\mathds{1}_{(\delta, 1]}(z))\;dz.
    \end{align*}

    \noindent We shall now define $(X_*^n(t))$ to be a coordinated particle system started with $n$ particles, coalescence according to $\Lambda_*(dz)$, and death according to $D_*(dz) = \mathds{1}_{[0,1/8]}D(dz)$. Now since the restrictions of $\Lambda(dz)$ and $\Lambda^*(dz)$ to $(0, \delta]$ are the same and similarly the restrictions of $D(dz)$ and $D_*(dz)$ to $[0,1/8]$ are the same, we are able to use Lemma 3.1 in \cite{betasmalltime} to couple the processes such that $X^n(s) = X^n_*(s)$ for all $s < t$ for some random time $t > 0$, noting that the proof of Lemma 3.1 in \cite{betasmalltime} also works for the death measure. Thus there exists a fixed time $u$ such that $P(X^n(s) = X^n_*(s) \text{ for all } s < u) \geq 1 - \varepsilon/3$ for all $n$. We can also see that there exists an $M$ such that $P(X^n(u) < M) > 1 - \varepsilon/3$. Finally, since $\Lambda_*(dz)$ satisfies the conditions of \lemref{lambdaregularity} we can apply the first case to obtain a $K$ such that if $\ell \geq K$ and $n \geq k+ \ell$ then 
    \begin{align*}
        P(X^n_*(t) \in \{k, \ldots, k+ \ell\} \text{ for some } t) \geq 1 - \frac{\varepsilon}{3}.
    \end{align*}

    \noindent Now we see that if $X^n(s) = X^n_*(s)$ for all $s \leq u$ and $X^n(u) \leq \ell$ then $V^n_{k,k+\ell}$ occurs if and only if $X^n_*(t) \in \{k, \ldots, k+ \ell\}$ for some $t$. We thus get the result by taking $L = \max\{M,K\}$.
\end{proof}

\begin{lemma}\label{L:probhitfromfar}
        Define the event $V_{k}^n = \{X^n(t) = k \text{ for some $t$}\}$. Then for all $\varepsilon > 0$ there exists $M,N \in \N$ such that if $k \geq M$ and $n \geq k + N$ then $|P(V^n_k) - (\alpha - 1)| < \varepsilon$.
\end{lemma}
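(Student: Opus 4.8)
\noindent The plan is to reduce the hitting probability $P(V^n_k)$ to that of the renewal walk with step law $\zeta$, using \lemref{withinrange} as an intermediate step. The idea behind \lemref{percenthit} is that the coalescent-with-death, watched at its jump times, behaves like a renewal process with mean step $E[\zeta]=1/(\alpha-1)$; the point here is to get this \emph{uniformly} in $k$ and $n$. One cannot simply couple the embedded jump chain to an honest $\zeta$-walk starting from level $n$, since the number of jumps needed to fall from $n$ down to $k$ grows with $n$ and the coupling errors would accumulate out of control. Instead I would first use \lemref{withinrange} to force the process into a window of \emph{fixed} width sitting a fixed distance above level $k$, and only then couple the jump chain the rest of the way down to $k$, over a bounded number of steps.

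\noindent Concretely, fix $\varepsilon>0$. Since $\zeta$ is aperiodic (as $\zeta_1,\zeta_2>0$) with $E[\zeta]=1/(\alpha-1)$, the discrete renewal theorem gives $p(j):=P(j\in\{S_0,S_1,\dots\})\to\alpha-1$, where $(S_j)_{j\ge0}$ is the walk with $S_0=0$ and i.i.d.\ $\zeta$-steps; pick $\ell_0$ with $|p(j)-(\alpha-1)|<\varepsilon/4$ for all $j\ge\ell_0$, and note by translation invariance that the $\zeta$-walk started from $m$ visits $k$ with probability $p(m-k)$. Then apply \lemref{withinrange} with tolerance $\varepsilon/4$ to obtain a width $L$, set $N:=\ell_0+L$ and $I:=\{k+\ell_0,\dots,k+\ell_0+L\}$, and conclude that for every $n\ge k+N$ the process $(X^n(t))$ visits $I$ with probability at least $1-\varepsilon/4$.

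\noindent Next I would estimate $P_m(\text{hit }k)$ for $m\in I$, where $P_m$ denotes the coalescent-with-death started from $m$ particles. Consider its embedded jump chain $(Y_j)$ with $Y_0=m$, which from a level $m'\ge2$ drops by an amount with law $(\zeta_{m',h})_h$, a probability distribution on $\{1,\dots,m'\}$ (every coalescence or death deletes at least one particle). By \corref{convergeindist}, $\zeta_{m',h}\to\zeta_h$ for each fixed $h$, and since $\zeta$ is a genuine probability law Scheff\'e's lemma upgrades this to $\|\zeta_{m',\cdot}-\zeta\|_{\mathrm{TV}}\to0$; choose $M\ge2$ with this distance below $\varepsilon/(4N)$ for all $m'\ge M$, and assume $k\ge M$. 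Running $(Y_j)$ alongside an honest $\zeta$-walk from $m$ via the maximal coupling at each step, the two decrements agree with conditional probability exceeding $1-\varepsilon/(4N)$ whenever the current level of $(Y_j)$ exceeds $k$ (hence is $\ge M$). Since both chains fall by at least $1$ per step and $m-k\le N$, each of them first reaches a level $\le k$ within $N$ steps, so a union bound shows that with probability at least $1-\varepsilon/4$ the two trajectories coincide up to that moment, and on this event $(Y_j)$ hits $k$ exactly when the $\zeta$-walk does. Combining with the choice of $\ell_0$ yields $|P_m(\text{hit }k)-(\alpha-1)|\le\varepsilon/4+|p(m-k)-(\alpha-1)|<\varepsilon/2$ for every $m\in I$.

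\noindent Finally I would assemble the pieces with the strong Markov property. On $\{X^n\text{ visits }I\}$, let $\tau$ be the first visit time and $Y^*=X^n(\tau)\in I$; the process is non-increasing with $Y^*\ge k+\ell_0>k$, so it has not hit $k$ at or before $\tau$, whence $P(V^n_k)=E[\mathds{1}_{\{X^n\text{ visits }I\}}\,P_{Y^*}(\text{hit }k)]+P(V^n_k,\ X^n\text{ misses }I)$. Using $1-\varepsilon/4\le P(X^n\text{ visits }I)\le1$, $0\le P(V^n_k,\ X^n\text{ misses }I)\le\varepsilon/4$, $\alpha-1<1$, and the estimate from the previous paragraph gives $|P(V^n_k)-(\alpha-1)|<\varepsilon$ whenever $k\ge M$ and $n\ge k+N$, which is the claim. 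The one genuinely delicate point is the uniformity: it works precisely because $N$ is chosen \emph{before} $M$, so the per-step total-variation budget $\varepsilon/(4N)$ is a fixed positive number and $M$ can be taken large enough to meet it; a proof attempting to couple directly from level $n$ would have to contend with an unbounded, $n$-dependent number of steps. Everything else is the renewal theorem, \lemref{withinrange}, and \corref{convergeindist} together with Scheff\'e's lemma, used off the shelf.
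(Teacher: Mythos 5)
Your proposal is correct and follows essentially the same route as the paper: use \lemref{withinrange} to land the process in a window of fixed width a bounded distance above $k$, then couple the embedded jump chain (whose decrements converge to $\zeta$ by \corref{convergeindist}) with the $\zeta$-renewal walk over the resulting bounded number of steps, and finish with the discrete renewal theorem. Your step-by-step maximal coupling with a per-step budget of $\varepsilon/(4N)$ and the explicit strong Markov decomposition at the first visit to $I$ are just a more hands-on rendering of the paper's total-variation coupling of the decrement vectors and its bookkeeping via the variables $H_k^n$ and the functions $F_i$.
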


\begin{proof}
    The strategy for this proof is essentially the same as for the proof of Theorem 1.8 in \cite{betasmalltime}. We shall begin by defining the sequence of random variables $(X_i)_{i = 1}^\infty$ to be i.i.d. with the same distribution as $\zeta$. Letting $S_n = \sum_{i = 1}^\infty X_i$, we can define the event $A_k = \{S_n = k \text{ for some $n$}\}$. Now as we previously remarked, by the discrete Blackwell Renewal Theorem we have
    \begin{align*}
        \lim_{k \to \infty} P(A_k) = \alpha -1.
    \end{align*}

    \noindent Now fixing $\varepsilon > 0$ we can first choose $L$ as in \lemref{withinrange} such that if $\ell \geq L$ then $P(V_{k,k+\ell}^n) \geq 1-\varepsilon$ for all $n \geq k + \ell$. Now choose $m$ large such that if $j \geq (m-1)L$ then $|P(A_j) - (\alpha - 1)| < \varepsilon$. We shall define $N \coloneqq mL$. 
    \medskip
    
    \noindent Now we can define the events $W_k^n = V^n_{k + (m-1)L, k + mL}$ and the random variables
    \begin{align*}
        H_k^n = \max\{j \leq k + mL \colon X^n(t) = j \text{ for some $t$}\}.
    \end{align*}

    \noindent We shall also let $(B_k)_{k = 1}^\infty$ be a sequence of independent random variables such that $P(B_k = j) = \zeta_{k,j}$ for $j \leq k$. Now for each nonnegative integer $i \leq L$ we can define the function $F_i(b_0, b_1, \ldots, b_{mL - i})$ as follows. Construct the sequence $(x_k)_{k = 0}^{mL-i}$ inductively by
    \begin{align*}
        x_k = \begin{cases}
            0 &  k= 0\\
            x_{k-1} + b_{x_{k-1}} & x_{k-1} < mL- i\\
            x_{k-1} & \text{otherwise}
        \end{cases}.
    \end{align*}

    \noindent Then $F_i(b_0, \ldots, b_{mL-i}) = \mathds{1}_{\{x_j = mL-i \text{ for some $j$}\}}$. One way to view the function $F_i(b_0, \ldots, b_{mL-i})$ is through the following model. Imagine a set of lily pads labeled by $\N$ with a frog sitting on lily pad $0$. Now if the frog is on lily pad $\alpha$ it will jump forward a distance of $b_\alpha$ to lily pad $\alpha + b_\alpha$. Now $F_i(b_0, \ldots, b_{mL-i})$ is the event that the frog lands exactly on lily pad $mL-i$. Now we can view $B_k$ are the number of particles lost when the coalescent with death has $k$ particles. This means if the coalescent with death has $k + mL - i$ particles then the probability that it eventually has $k$ particles will be given by $E[F_i(B_{k+mL - i}, \ldots, B_{k+1})]$ since $F_i(B_{k+mL - i}, \ldots, B_{k+1})$ is the event the coalescent will lose exactly $mL-i$ particles. Thus we see that
    \begin{align*}
        P(V_k^n \cap W_k^n) = \sum_{i = 0}^L P(H_k^n = k + mL - i) E[F_i(B_{k+mL - i}, \ldots, B_{k+1})].
    \end{align*}

    \noindent We also note that $F_i(X_1, \ldots, X_{mL - i}) = \mathds{1}_{A_{mL-i}}$ by reasoning similarly to how we reasoned about $F_i(B_{k+mL - i}, \ldots, B_{k+1})$. Now by \corref{convergeindist} we know that the $B_k$ are converging in distribution to $\zeta$ for all $j$. Thus we can define the constant $M$ such that if $k \geq M$ we can couple $(B_k)_{k = 1}^\infty$ and $(X_i)_{i = 1}^\infty$ such that the the total variation distance of the random vectors $(B_{k+mL - i}, \ldots, B_{k+1})$ and $(X_1, \ldots, X_{mL - i})$ is at most $\varepsilon$ for all $i \leq L$. Thus for all $k \geq M$ we have that
    \begin{align*}
        |E[F_i(B_{k+mL - i}, \ldots, B_{k+1})] - P(A_{mL - i})| < \varepsilon.
    \end{align*}

    \noindent Now noting that we previously chose $m$ such that if $j \geq (m-1)L$ then $|P(A_j) - (\alpha -1)| < \varepsilon$, the above expression means that
    \begin{align*}
        |E[F_i(B_{k+mL - i}, \ldots, B_{k+1})] - (\alpha - 1)| < 2\varepsilon.
    \end{align*}

    \noindent Now from our definition of $H_k$ we know that $H_k^n \geq k + (m-1)L$ if and only if $W_k^n$ occurs. Thus from our previous estimates, when $k \geq M$ and when $n \geq k + N$ we have the lower bound
    \begin{align*}
        P(V_k^n) &\geq P(V_k^n \cap W_k^n) \\
        &= \sum_{i = 0}^L P(H_k^n = k + mL - i) E[F_i(B_{k+mL - i}, \ldots, B_{k+1})] \\
        &\geq (\alpha - 1 - 2\varepsilon)P(H_k^n \geq k + (m-1)L)\\
        &\geq (\alpha - 1 - 2\varepsilon)(1-\varepsilon).
    \end{align*}

    \noindent Similarly with the same condition on $k$ and $n$ we have the upper bound
    \begin{align*}
        P(V_k^n) &\leq  (1-P(W_k^n)) + P(V_k^n \cap W_k^n) \\
        &\leq \varepsilon + \sum_{i = 0}^L P(H_k^n = k + mL - i) E[F_i(B_{k+mL - i}, \ldots, B_{k+1})]\\
        &\leq \varepsilon + (\alpha - 1 + 2\varepsilon).
    \end{align*}

    \noindent This proves the statement of the lemma.
\end{proof}

\noindent This lemma allows us to give a very quick proof of \lemref{percenthit} which we write below.

\begin{proof}[Proof of \lemref{percenthit}]
    \noindent We first note that $\{X(t) = k \text{ for some } t \geq 0\} = V_k = V_k^\infty$ in the notation of \lemref{probhitfromfar}. Thus by \lemref{probhitfromfar}, for all $\varepsilon > 0$ there exists $M \in \N$ such that if $k \geq M$ then $|P(V_k^\infty) - (\alpha - 1)| < \varepsilon$. Thus
    \begin{align*}
        \lim_{k \to \infty}P(X(t) = k \text{ for some } t \geq 0) = \alpha -1,
    \end{align*}

    \noindent as desired
\end{proof}

\noindent Using this result we are now able to prove \thmref{betasuffstrong} which gives a criterion on the migration measures for when an infinite number of particles are able to leave a site before any positive time. This is similar to the condition obtained in the proof of part (b) to Theorem 2.7 in \cite{seedbank2} which considers that $\alpha = 2$ case, but now we have a similar result for coalescents with $\alpha \in (1,2)$.

\begin{proof}[Proof of \thmref{betasuffstrong}]
    To begin this proof, we shall first define the set of events $\{M_n\}_{n \in \N}$ where $M_n$ is the event that a particle migrates from site $u$ to $v$ at some time $t$ when $X_u(t-) = n$. Now we are able to break this proof into four steps. The first step is the show that the sum 
    \begin{align}\label{eq:theoremsum}
        \sum_{n=1}^\infty \frac{E[(1-W)^{n}]}{n^{\alpha-1}}
    \end{align}

    \noindent in the theorem statement converges/diverges if and only if the sum
    \begin{align}\label{eq:reindexedsum}
        \sum_{n=1}^\infty \frac{E[(1-W)^{n-1}]}{n^{\alpha-1}}
    \end{align}
    
    \noindent converges/diverges where the only difference is whether we have $n$ or $n-1$ in the exponent. The second step consists of showing that $P(M_n \text{ i.o.}) = 1$ when
    \begin{align*}
        \sum_{n=1}^\infty \frac{E[(1-W)^{n-1}]}{n^{\alpha-1}} = \infty.
    \end{align*}

    \noindent The third step is showing $P(M_n \text{ i.o.}) = 0$ when the sum above converges. Finally, we shall show that the number of particles that are able to migrate depends precisely on how many of the events $\{M_n\}_{n \in \N}$ occur.
    \medskip

    \noindent We shall begin with step one which follows from using the limit comparison test. We note that the re-indexed sum \eqref{eq:reindexedsum} can be written as
    \begin{align}\label{eq:reindexedsum2}
        \sum_{n=1}^\infty \frac{E[(1-W)^{n-1}]}{n^{\alpha-1}} = \sum_{n = 0}^\infty \frac{E[(1-W)^n]}{(n+1)^{\alpha-1}}.
    \end{align}

    \noindent Now we can take the limit as $n \to \infty$ for the ratio of the terms in the sums \eqref{eq:theoremsum} and \eqref{eq:reindexedsum2} see that
    \begin{align*}
        \lim_{n \to \infty} \left( \frac{E[(1-W)^n]}{(n+1)^{\alpha-1}}\right)^{-1} \left( \frac{E[(1-W)^n]}{n^{\alpha-1}}\right) = \lim_{n \to \infty} \frac{(n+1)^{\alpha-1}}{n^{\alpha - 1}} = 1.
    \end{align*}

    \noindent Thus by the limit comparison test we have shown the first step which is the equivalence of the convergence/divergence of the sums \eqref{eq:theoremsum} and \eqref{eq:reindexedsum}.
    \medskip
 
    \noindent Next, we will show that $P(M_n \text{ i.o.}) = 1$ when $\sum_{n=1}^\infty \frac{E[(1-W)^{n-1}]}{n^{\alpha-1}} = \infty$. In order to do this we wish to apply the Kochen-Stone Lemma which requires that
    \begin{align}\label{eq:kochencond}
        \sum_{n = 1}^\infty P(M_n) = \infty.
    \end{align}

    \noindent To show this divergence, we shall begin as in the proof of Theorem 2.7 of \cite{seedbank2} where we define the quantities $\gamma(n)$ to be the rate at which migration occurs when there are $n$ particles present at site $u$. We can compute that
    \begin{align*}
        \gamma(n) &= \sum_{i = 1}^n\int_0^1 \binom{n}{i}z^i(1-z)^{n-i} \frac{M_{uv}(dz)}{z}\\
        &= \int_0^1 (1- (1-z)^n)\frac{M_{uv}(dz)}{z}\\
        &= \int_0^1 \int_0^1 n(1-yz)^{n-1}\;dy\,M_{uv}(dz)\\
        &= n M_{uv}([0,1])E[(1-W)^{n-1}],
    \end{align*}

    \noindent where $W$ is the random variable defined in the theorem statement. Now if we define the event $V_n = \{X_u(t) = n \text{ for some } t\}$, then we can compute that
    \begin{align*}
        P(M_n|V_n) = \frac{\gamma(n)}{\lambda_n + d_n + \gamma(n)} = \frac{n M_{uv}([0,1])E[(1-W)^{n-1}]}{\lambda_n + d_n + n M_{uv}([0,1])E[(1-W)^{n-1}]}.
    \end{align*}

    \noindent Now noting that $(X_u(t))_{t \geq 0}$ behaves like a $\Lambda_u$-coalescent with death according to $D_u(dz) + M_{uv}(dz)$, we see that $P(M_n)(P(M_n|V_n))^{-1} = P(V_n) \to \alpha - 1$ by \lemref{percenthit}. Since $\alpha -1 \in (0,\infty)$, by the limit comparison test we have that
    \begin{align*}
        \sum_{n = 1}^\infty P(M_n) = \infty \iff \sum_{n = 1}^\infty P(M_n|V_n) = \infty. 
    \end{align*}

    \noindent We can also note that $\lambda_n = O(n^\alpha)$ by Lemma 4 in \cite{stochasticflows3}, $n^{-\alpha}d_n \to 0$ by \lemref{asymptoticdeathrate}, and $n^{-\alpha}\gamma(n) \to 0$ from the fact $\gamma(n) = n M_{uv}([0,1])E[(1-W)^{n-1}]$. Thus we have that
    \begin{align*}
        \sum_{n = 1}^\infty P(M_n|V_n) = \sum_{n = 1}^\infty \frac{\gamma(n)}{\lambda_n + d_n + \gamma(n)} = \infty \iff \sum_{n=1}^\infty \frac{E[(1-W)^{n-1}]}{n^{\alpha-1}} = \infty.
    \end{align*}

    \noindent From the assumption on $M_{uv}(dz)$ the latter sum diverges which means that $\sum_{n = 1}^\infty P(M_n) = \infty$.
    \medskip
    
    \noindent Now applying \lemref{percenthit}, we obtain a constant $L \in \N$ such that if $k \geq L$ then $|P(V_k) - (\alpha - 1)| < \varepsilon$. We note that $\{M_n \text{ i.o.}\}$ occurs if and only if the event $\{M_n \text{ i.o. for $n \geq L$}\}$ occurs. The sum \eqref{eq:kochencond} being infinite allows us to use a variant of the Kochen-Stone Lemma which can be found as Theorem 2 in \cite{Yan2006} which states that
    \begin{align*}
        P(M_n \text{ i.o. for $n \geq L$}) \geq \limsup_{N \to \infty}\left(\sum_{k = L}^N\sum_{j = k+1}^N P(M_k)P(M_j) \right)\Bigg{/}\left(\sum_{k = L}^N\sum_{j = k+1}^N P(M_j \cap  M_k)\right).
    \end{align*}

    \noindent In order to compute this limit we see that for $j > k$ the probability $P(M_j \cap M_k)$ is given by
    \begin{align*}
        P(M_j \cap M_k) &=P(M_j)P(M_k |M_j) \\
        &= P(M_j)P(V_k |M_j)P(M_k |M_j \cap V_k) \\
        &=P(M_j)P(V_k |M_j)P(M_k |V_k),
    \end{align*}

    \noindent where the last equality follows from the fact that $M_j$ and $M_k$ are conditionally independent given the event $V_k$. Now using the fact that $|P(V_k) - (\alpha - 1)| < \varepsilon$ for $k \geq L$ we can obtain the lower bound
    \begin{align*}
        \frac{\sum_{k = L}^N\sum_{j = k+1}^N P(M_k)P(M_j)}{\sum_{k = L}^N\sum_{j = k+1}^N P(M_j \cap  M_k)} &= \frac{\sum_{k = L}^N\sum_{j = k+1}^N P(M_j)P(V_k)P(M_k|V_k)}{\sum_{k = L}^N\sum_{j = k+1}^N P(M_j)P(V_k |M_j)P(M_k |V_k)} \\
        &\geq \frac{\sum_{k = L}^N\sum_{j = k+1}^N P(M_j)P(V_k)P(M_k|V_k)}{\sum_{k = L}^N\sum_{j = k+1}^N P(M_j)P(M_k |V_k)} \\
        &\geq \frac{\sum_{k = L}^N\sum_{j = k+1}^N P(M_j)(\alpha - 1 - \varepsilon)P(M_k|V_k)}{\sum_{k = L}^N\sum_{j = k+1}^N P(M_j)P(M_k |V_k)} \\
        &= \alpha -1 - \varepsilon.
    \end{align*}
    
    \noindent Using this lower bound in the Kochen-Stone lemma means that we have shown
    \begin{align*}
        P(M_n \text{ i.o.})= P(M_n \text{ i.o. for $n \geq L$}) \geq \limsup_{N \to \infty}\frac{\sum_{k = L}^N\sum_{j = k+1}^N P(M_k)P(M_j)}{\sum_{k = L}^N\sum_{j = k+1}^N P(M_j \cap  M_k)} \geq \alpha -1 -\varepsilon.
    \end{align*}

    \noindent Now taking $\varepsilon \to 0$ we obtain the bound $P(M_n \text{ i.o.}) \geq \alpha -1 > 0$. Now we shall note that
    \begin{align*}
        \{M_n \text{ i.o.}\} \in \bigcap_{s > 0} \sigma(X(t)\; \colon t < s),
    \end{align*}

    \noindent the germ $\sigma$-field for the process $(X(t))_{t \geq 0}$ since for every $s > 0$ we know whether $\{M_n \text{ i.o.}\}$ has occurred by time $s$. Now since $(X(t))_{t \geq 0}$ is a right continuous Feller process when restricted just to the site $u$ we can apply Blumenthal's 0-1 Law, Theorem 2.15 in \cite{revuzandyor}, to conclude that $P(M_n \text{ i.o.}) \in \{0,1\}$ and thus proving the claim as we previously showed that $P(M_n \text{ i.o.}) \geq \alpha -1 > 0$.
    \medskip

    \noindent Now to prove that $P(M_n \text{ i.o.}) = 0$ when $\sum_{n=1}^\infty \frac{E[(1-W)^{n-1}]}{n^{\alpha-1}} < \infty$ we see by the previous discussion that
    \begin{align*}
        \sum_{n=1}^\infty \frac{E[(1-W)^{n-1}]}{n^{\alpha-1}} < \infty  \implies \sum_{n = 1}^\infty M_n < \infty.
    \end{align*}

    \noindent Thus by the First Borel-Cantelli Lemma we see that $P(M_n \text{ i.o}) = 0$.
    \medskip

    \noindent Finally, we need to show that the number of particles that are able to migrate is fully controlled by the number of the events $(M_n)_{n \in \N}$ that occurred. To do this let us define the random variable $\mathcal{M}$ to be the total number of particles that migrate from $u$ to $v$. Since the sequence of random variables $(\mathds{1}_{M_n})_{n \in \N}$ represents whether migration event occurred when there were $n$ particles at site $u$ we have the bounds
    \begin{align*}
        \sum_{n \in \N} \mathds{1}_{M_n} \leq \mathcal{M} \leq \sum_{n \in \N} n\mathds{1}_{M_n}.
    \end{align*}

    \noindent Thus $\mathcal{M}$ being infinite or finite is determined solely by how many of the events occurred in the sequence $(M_n)_{n \in\N}$. Thus we have
    \begin{align*}
        P(\mathcal{M} = \infty) = \begin{cases}
            1 & \text{ if } \sum_{n=1}^\infty \frac{E[(1-W)^{n-1}]}{n^{\alpha-1}} = \infty\\
            0 & \text{ if } \sum_{n=1}^\infty \frac{E[(1-W)^{n-1}]}{n^{\alpha-1}} < \infty
        \end{cases}.
    \end{align*}

    \noindent Thus result of the theorem now follows from the equivalence of the convergence/divergence of the two sums \eqref{eq:theoremsum} and \eqref{eq:reindexedsum} that we proved in step one.
\end{proof}

\noindent Now we shall recall from the discussion in the introduction that Theorem 2.7 in \cite{seedbank2} shows a similar if and only if to the one stated \thmref{betasuffstrong}, but for the Kingman coalescent instead of coalescents with sufficiently regular density. In that article the authors show that there are infinitely many migration events if and only if $E[-\log(Y)] = \infty$ where $Y \sim (M_{uv}([0,1]))^{-1}M_{uv}(dz)$. Now as shown by \cite{seedbank2} this condition is equivalent to the random variable $W \coloneqq UY$ where $U$ is uniform on $[0,1]$ satisfying 
\begin{align*}
    \sum_{n=1}^\infty \frac{E[(1-W)^{n}]}{n} = \infty.
\end{align*}

\noindent This is seen by noticing that the sum above is the Taylor series for $-\log(1-x)$. We shall note that this is equivalent to plugging in $\alpha = 2$ into \thmref{betasuffstrong}. Now it is clear that any migration measure that satisfies the condition $E[-\log(Y)] = \infty$ will satisfy the first condition in \thmref{betasuffstrong} since $\alpha \in (1,2)$. However, we shall now give an example of a migration measure that satisfies the condition of \thmref{betasuffstrong}, but not $E[-\log(Y)] = \infty$.

\begin{example}
    We shall define $M(dz) = z^{-\gamma}\mathds{1}_{[0,1]}\;dz$ where $\gamma \in (0,1)$. This is a finite measure on $[0,1]$ with total mass $C = M([0,1])$. Now let $Y \sim C^{-1}M(dz)$, and we shall now compute the quantity $E[(1-UY)^{n-1}]$ which is present in both of the summability conditions above. In order to do this, since the random variables $U$ and $Y$ are independent we can view this expectation as a double integral where we see
    \begin{align}\label{eq:computationofEW}
        E[(1-UY)^{n-1}] =\!\! \int_0^1\!\!\!\int_0^1 (1- uz)^{n-1}\;du M(dz) =\!\! \int_0^1 \frac{1-(1-z)^{n}}{nz}M(dz) = E\left[\frac{1-(1-Y)^n}{nY}\right].
    \end{align}

    \noindent Now to compute the right hand side we use the definition of $M(dz)$ as well as the substitution $u = nz$ to see that
    \begin{align}\label{eq:changeofvariables}
        E\left[\frac{1-(1-Y)^n}{nY}\right] = \frac{1}{C}\int_0^1 \frac{1-(1-z)^{n}}{nz^{1+\gamma}}dz = \frac{1}{Cn^{1-\gamma}}\int_0^n \frac{1-(1 - \frac{u}{n})^n}{u^{1+\gamma}}\;du.
    \end{align}

    \noindent Now note that for all $n \in \N$ we have 
    \begin{align*}
        0 <\frac{1-(1 - \frac{u}{n})^n}{u^{1+\gamma}} \leq \frac{1 \wedge u}{u^{1+\gamma}},
    \end{align*}
    
    \noindent which is an integrable function on $[0,\infty]$. Thus we may apply the Dominated Convergence Theorem to the integral in the final expression of \eqref{eq:changeofvariables} to see that as $n \to \infty$ 
    \begin{align*}
        \lim_{n \to \infty}\int_0^n \frac{1-(1 - \frac{u}{n})^n}{u^{1+\gamma}}\;du = \int_0^\infty \frac{1 - e^{-u}}{u^{1 + \gamma}}\;du < \infty.
    \end{align*}

    \noindent Now noting that
    \begin{align*}
        \int_0^1\frac{1-(1 - \frac{u}{n})^n}{u^{1+\gamma}}\;du > 0,
    \end{align*}

    \noindent for all $n \in \N$, and
    \begin{align*}
        \int_0^\infty\frac{1 - e^{-u}}{u^{1 + \gamma}}\;du > 0,
    \end{align*}

    \noindent there exist constants $C_1, C_2 > 0$ such that for all $n \in \N$, we have
    \begin{align}\label{eq:boundonEW}
        \frac{C_1}{n^{1-\gamma}} \leq E\left[\frac{1-(1-Y)^n}{nY}\right] \leq \frac{C_2}{n^{1-\gamma}}.
    \end{align}

    \noindent Now we shall note that plugging \eqref{eq:computationofEW} into the summability condition of \thmref{betasuffstrong} results in
    \begin{align*}
        \sum_{n = 1}^\infty \frac{E[(1-W)^{n}]}{n^{\alpha - 1}} = \sum_{n = 1}^\infty \frac{E\left[\frac{1-(1-Y)^{n+1}}{(n+1)Y}\right]}{n^{\alpha - 1}}
    \end{align*}

    \noindent Now we can use \eqref{eq:boundonEW} on the above expression to see that
    \begin{align}\label{eq:betasuffbound}
        \sum_{n = 1}^\infty\frac{C_1}{n^{\alpha-1}(n+1)^{1-\gamma}}\leq \sum_{n = 1}^\infty \frac{E[(1-W)^{n}]}{n^{\alpha - 1}} \leq \sum_{n = 1}^\infty\frac{C_2}{n^{\alpha-1}(n+1)^{1-\gamma}}.
    \end{align}

    \noindent Now by \thmref{betasuffstrong} we can see that if $\alpha - \gamma > 1$ then only finitely many particles will be able to migrate out of a site with coalescence according to a measure $\Lambda(dz)$ whose density satisfies \eqref{eq:density} by the upper bound in \eqref{eq:betasuffbound}. Similarly, if $\alpha - \gamma \leq 1$ then infinitely many particles will be able to migrate out of a site with coalescence according to a measure $\Lambda(dz)$ whose density satisfies \eqref{eq:density} due to the lower bound in \eqref{eq:betasuffbound}.
    \medskip
    
    \noindent In particular, $M(dz) = z^{-\gamma}\;dz$ where $\gamma = \alpha/2$ is an example of a migration measure such that an infinite number of particles escape a $\Lambda$-coalescent where $\Lambda = \Beta(2-\alpha, \alpha)$ as we have that $\alpha - \gamma = \alpha - \alpha/2 \leq 1$, but not for a Kingman coalescent since $2 - \gamma > 1$.
\end{example}

\section{Coordinated Particle Systems on Finite Graphs}

\noindent We shall now give a sufficient condition for a coordinated particle system not to come down from infinity. Coming down from infinity means that at $t = 0$ there are infinitely many particles present in the system, but with probability $1$ for every $t > 0$ there are only finitely many particles present. For this entire section we shall assume that the measures that control coalescence, death, and migration don't have an atom at $1$. In order to discuss when a coordinated particle system comes down from infinity we need to first understand the interaction between migration and coalescence. In order to do this we need the concept of a migration or reproduction measure being $\Lambda$-strong where we recall \defref{lambdastrong} in the introduction.
\medskip

\noindent Now we shall establish some results about when migration and reproduction measures are $\Lambda$-strong and some properties of these measures being $\Lambda$-strong. We shall first remark that \thmref{betasuffstrong} implies that if $\Lambda(dz) = f(z)dz$ where $f(z) \sim Bz^{1-\alpha}$ where $B> 0$ and $\alpha \in (1,2)$ as $z \to 0$ then a migration measure is $\Lambda$-strong if and only if the random variable $W \coloneqq UY$ where $U$ is uniform on $[0,1]$ and independent of $Y \sim M([0,1])^{-1}M(dz)$ satisfies
\begin{align*}
    \sum_{n = 1}^\infty \frac{E[(1-W)^{n}]}{n^{\alpha - 1}} = \infty.
\end{align*}

\noindent In addition, we see that Theorem 4.1 in \cite{seedbank1} and the proof of Theorem 2.7 in \cite{seedbank2} imply that if $\Lambda(dz) = \delta_0(dz)$ is Kingman, then a migration measure $M(dz)$ is $\Lambda$-strong if and only if either $M(\{0\}) > 0$ or $Y \sim M([0,1])^{-1}M(dz)$ satisfies $E[-\log(Y)] = \infty$. In fact we can obtain $M(dz)$ is $\Lambda$-strong with $\Lambda(dz) = C\delta_0(dz)$ for $C >0$ if and only if either $M(\{0\}) > 0$ or $Y \sim M([0,1])^{-1}M(dz)$ satisfies $E[-\log(Y)] = \infty$ by just carrying through the constant in the proofs of Theorem 4.1 in \cite{seedbank1} and Theorem 2.7 in \cite{seedbank2}. Now heuristically these conditions should imply that $M(dz)$ is $\Lambda$-strong for arbitrary $\Lambda(dz)$ since the Kingman coalescent is maximal in speed of coming down from infinity (see Corollary 3 in \cite{speed}). We will see that this is indeed the case and establish this in \propref{arblambdastrong} below.

\begin{proposition}\label{P:arblambdastrong}
    If a migration measure $M(dz)$ satisfies either $M(\{0\}) > 0$ or $E[-\log(Y)] = \infty$ where $Y \sim M([0,1])^{-1}M(dz)$, then $M(dz)$ is $\Lambda$-strong for all coalescent measures $\Lambda(dz)$.
\end{proposition}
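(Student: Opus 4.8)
The plan is to fix a coalescence measure $\Lambda$ for which the $\Lambda$-coalescent comes down from infinity (the only case in which $\Lambda$-strength is defined), take the two-site system $(X(t))_{t\ge 0}$ of \defref{lambdastrong} with $\Lambda_u=\Lambda$, $M_{uv}=M$, $X_u(0)=\infty$, $X_v(0)=0$ and all other measures zero, and show $P(X_v(t_0)=\infty)=1$ for every $t_0>0$. Since no measure feeds particles into $v$, the process $X_v$ is non-decreasing, so $X_v(t_0)=\infty$ iff the number $\mathcal M_{t_0}$ of particles that migrate from $u$ to $v$ by time $t_0$ is infinite; as each migration event moves at least one particle, it suffices to show that infinitely many migration events occur before $t_0$. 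I would argue by contradiction: fix $R_0\in\N$, assume $P(\mathcal M_{t_0}\le R_0)>0$, reach a contradiction, and then let $R_0\to\infty$. Morally this realises the comparison suggested in the paragraph before the statement — the hypothesis on $M$ is exactly the condition that $M$ be $\delta_0$-strong, and the Kingman coalescent depletes $u$ at least as fast as the $\Lambda$-coalescent — but instead of coupling the two coalescents directly I would compare $X_u$ with the \emph{pure} $\Lambda$-coalescent at $u$.

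The first ingredient is a pathwise bound. Let $N_{\mathrm{pure}}(s)$ be the block count of the $\Lambda$-coalescent from infinitely many blocks, built on the same space as $X$ via \thmref{PPPconstruction} using the same coalescence atoms $C^{(u)},C^{(u)}_{s_1,s_2}$ as the copy at $u$. Running the argument at the finite levels $X^{(n)}$ and letting $n\to\infty$, one checks by induction over the atoms that $D(s):=N_{\mathrm{pure}}(s)-X_u(s)$ satisfies $0\le D(s)\le\mathcal M_s$ for all $s$: at a coalescence atom $D$ can only decrease, since $X_u(s-)\le N_{\mathrm{pure}}(s-)$ forces the triggered merger to involve no more blocks in $X_u$ than in $N_{\mathrm{pure}}$, while at each migration event $D$ and $\mathcal M$ jump by the same amount. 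Hence $X_u(s)\ge N_{\mathrm{pure}}(s)-\mathcal M_s$. The second ingredient is that $N_{\mathrm{pure}}(s)\to\infty$ as $s\to 0$ (standard for any $\Lambda$-coalescent from infinity) and moreover $\liminf_{s\to0}s\,N_{\mathrm{pure}}(s)>0$ a.s.; the latter follows from the speed of coming down from infinity (\cite{speed}) together with the elementary bound $\gamma_n\le\binom n2\Lambda([0,1])=O(n^2)$ on the total coalescence rate, which comes from $bz-1+(1-z)^b\le\binom b2 z^2$ on $[0,1]$. Consequently, on $\{\mathcal M_{t_0}\le R_0\}$ we get $X_u(s)\ge N_{\mathrm{pure}}(s)-R_0\ge\tfrac12 N_{\mathrm{pure}}(s)\ge c_1/s$ for $s$ in some random interval $(0,\delta]$ with $\delta>0$; and $T:=\sup\{r\le t_0:\ X_u(s)\ge c_1/s\text{ for all }s\in(0,r]\}$ is a stopping time (use right-continuity of the filtration) which is strictly positive on $\{\mathcal M_{t_0}\le R_0\}$.

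Now I would use the explicit migration rate. By the computation in the proof of \thmref{betasuffstrong}, the rate of migration events when $X_u=n$ is $\gamma(n)=n\,M([0,1])\,E[(1-W)^{n-1}]$ with $W=UY$, which is non-decreasing in $n$; and since $E[(1-W)^{n-1}]$ is non-increasing, a change of variables and an integral comparison show that for every $\tau>0$,
\[
\int_0^\tau \gamma\!\left(\lceil c_1/s\rceil\right)\,ds \ \text{diverges} \iff \sum_{n\ge 1}\frac{E[(1-W)^n]}{n}=\infty .
\]
Under the hypothesis $E[-\log Y]=\infty$ (with the convention $-\log 0=\infty$, which also covers the case $M(\{0\})>0$), so the right-hand sum equals $E[-\log(UY)]=1+E[-\log Y]=\infty$ — and when $M(\{0\})>0$ one has $\gamma(n)\ge M(\{0\})\,n$, making the divergence immediate. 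Therefore, on $\{T>0\}$ the predictable compensator $\int_0^{T}\gamma(X_u(s-))\,ds$ of the counting process of migration events stopped at $T$ is infinite, since it dominates $\int_0^{T}\gamma(\lceil c_1/s\rceil)\,ds=\infty$. As a counting process has infinitely many jumps by a given time iff its compensator is infinite by that time, infinitely many migration events occur in $(0,T]\subseteq(0,t_0]$, so $\mathcal M_{t_0}=\infty$ on $\{T>0\}$. This contradicts $\{\mathcal M_{t_0}\le R_0\}\subseteq\{T>0\}$, hence $P(\mathcal M_{t_0}\le R_0)=0$; letting $R_0\to\infty$ gives $P(X_v(t_0)=\infty)=1$, and since $t_0>0$ was arbitrary, $M$ is $\Lambda$-strong.

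The main obstacle, and the reason one cannot simply "quote the Kingman result", is the feedback of migration onto the population at $u$: the very measure $M$ we want to certify is simultaneously thinning site $u$, so $X_u$ is not obviously dominated from below by a Kingman-with-migration process, nor may one discard part of $M$. The device that removes this difficulty is to work on $\{\mathcal M_{t_0}\le R_0\}$, where the total migration correction is the fixed finite number $R_0$ and hence negligible against $N_{\mathrm{pure}}(s)\to\infty$; the remaining points — the Poisson-point-process gap estimate $0\le D\le\mathcal M$, the near-zero lower bound $N_{\mathrm{pure}}(s)\gtrsim 1/s$, and the counting-process/compensator dichotomy — are routine. I also note that when $\Lambda$ has a regular density as in \thmref{betasuffstrong}, the conclusion already follows from that theorem, since $\sum_n E[(1-W)^n]/n^{\alpha-1}\ge\sum_n E[(1-W)^n]/n=\infty$ for $\alpha\in(1,2)$.
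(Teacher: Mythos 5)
Your argument is correct, and it takes a genuinely different route from the paper's. The paper splits into two cases: when $M(\{0\})>0$ it converts the coordinated part of $M$ into death, invokes \propref{deathdoesntmatter} and Lemma 5.3 of \cite{Angel2012}; when $M(\{0\})=0$ it absorbs the migration-induced thinning of site $u$ into an auxiliary $\Lambda_*=2\Lambda+D$ coalescent via \lemref{removedeath} (which rests on Lemma 7.2 of \cite{Angel2012}), dominates that from below by a sped-up Kingman coalescent via \lemref{kingmancontrol}, and then quotes the Kingman-strong criterion from \cite{seedbank1, seedbank2}. You instead handle both cases uniformly by contradiction: on $\{\mathcal M_{t_0}\le R_0\}$ the feedback of migration on $X_u$ is a bounded perturbation, so the pathwise gap estimate $0\le N_{\mathrm{pure}}(s)-X_u(s)\le\mathcal M_s$ (which is a valid consequence of the coupling in \thmref{PPPconstruction}; the gap is non-increasing at coalescence atoms and jumps exactly with $\mathcal M$ at migration atoms) pins $X_u$ to within $R_0$ of the pure $\Lambda$-coalescent, and the divergence of the migration compensator $\int_0^T\gamma(X_u(s-))\,ds$ then forces infinitely many migrations. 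Both proofs ultimately rely on the same input from \cite{speed} — the lower bound $N(s)\gtrsim 1/s$ for an arbitrary $\Lambda$-coalescent — but your version is more self-contained: it avoids the comparison machinery of \lemref{removedeath} and the reduction to the externally established Kingman case, at the price of re-deriving the rate divergence $\sum_n E[(1-W)^n]/n = 1+E[-\log Y]$ directly and invoking the counting-process/compensator dichotomy. The paper's route, by contrast, produces reusable domination lemmas and makes the heuristic ``Kingman is the worst case'' literal via an explicit coupling. One small point worth spelling out if you write this up: the constant $c_1$ in your lower bound $N_{\mathrm{pure}}(s)\ge c_1/s$ should be taken deterministic (it can be, since $\psi(q)\le\binom q2\Lambda([0,1])$ gives a deterministic lower bound on the speed function), so that $T$ is genuinely a stopping time.
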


\noindent However, before we proceed with a proof of \propref{arblambdastrong}, we shall first begin with some more basic properties of measures being $\Lambda$-strong. It is first important to note that a migration or reproduction measure is either $\Lambda$-strong or is not, i.e. the probability an infinite number of particles migrate/reproduce before any positive time is either $0$ or $1$. This is a direct corollary of Blumenthal's 0-1 law for Feller processes since both of these events are in the germ $\sigma$-field as the $\Lambda$-coalescent comes down from infinity. Now we have the following corollary of the definition that says that if an infinite number of particles arrive at a site for any positive time then either the migration measure or the reproduction measure was $\Lambda$-strong.

\begin{corollary}\label{C:eitherMorRstrong}
    Let $(X(t))_{t \geq 0}$ be a coordinated particle system on sites $\{u,v\}$ coordinated by measures $\Lambda_u(dz)$, $M_{uv}(dz)$, $R_{uv}(dz)$, and all other measures are $0$. Letting $X_u(0) = \infty$ and $X_v(0) = 0$, if $P(X_v(t) = \infty) = 1$ for all $t > 0$ then either $M_{uv}(dz)$ or $R_{uv}(dz)$ is $\Lambda_u$-strong.
\end{corollary}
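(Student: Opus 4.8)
I would prove the slightly sharper quantitative statement
\[
\mathds{1}\{M_{uv}\text{ is }\Lambda_u\text{-strong}\}+\mathds{1}\{R_{uv}\text{ is }\Lambda_u\text{-strong}\}\ \ge\ P\big(X_v(t)=\infty\ \text{for all }t>0\big),
\]
which yields the corollary at once: since $X_v$ is non-decreasing, $\{X_v(t)=\infty\}$ increases in $t$, so the right-hand side equals $\lim_{t\downarrow0}P(X_v(t)=\infty)$, which is $1$ under the hypothesis. The starting point is the structural observation that, because every measure supported at $v$ or pointing out of $v$ is zero, a particle that reaches $v$ stays there forever; hence $X_v(t)=N_M(t)+N_R(t)$, where $N_M(t)$ (resp.\ $N_R(t)$) is the number of particles that have migrated from $u$ to $v$ (resp.\ been born at $v$ by a reproduction event from $u$) by time $t$. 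Both are non-decreasing with values in $\N_0\cup\{\infty\}$, so their limits as $t\downarrow0$ exist and
\[
\{X_v(t)=\infty\ \forall t>0\}=\Big\{\lim_{t\downarrow0}N_M(t)=\infty\Big\}\cup\Big\{\lim_{t\downarrow0}N_R(t)=\infty\Big\},
\]
so that $P(X_v(t)=\infty\ \forall t>0)\le P(\lim_{t\downarrow0}N_M(t)=\infty)+P(\lim_{t\downarrow0}N_R(t)=\infty)$.

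It then remains to identify the two probabilities on the right with the two $\Lambda_u$-strength conditions. For the migration term the crucial point is that a reproduction event from $u$ to $v$ does not change $X_u$, so in the full two-site process $(X_u(t))_{t\ge0}$ is autonomous: it is exactly a $\Lambda_u$-coalescent whose particles additionally "die" at the migration rates $m_{n,k}^{(u,v)}$. Coupling through the Poisson point process construction of \thmref{PPPconstruction}, the pair $(X_u,N_M)$ for the full process has the same law as the pair $(X_u,X_v)$ for the system on $\{u,v\}$ in which only $\Lambda_u$ and $M_{uv}$ are active, so by \defref{lambdastrong} we get $P(\lim_{t\downarrow0}N_M(t)=\infty)=\mathds{1}\{M_{uv}\text{ is }\Lambda_u\text{-strong}\}$, the value being a genuine indicator by the Blumenthal $0$-$1$ dichotomy for $\Lambda$-strength recalled just before the statement. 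For the reproduction term I would couple the full process to the system $(\widehat X(t))_{t\ge0}$ on $\{u,v\}$ with only $\Lambda_u$ and $R_{uv}$ active: deleting migration only adds particles at $u$, and the per-event reproduction count is monotone in the number of particles at $u$, so a Poisson-point-process coupling of the kind used for part~(b) of \thmref{PPPconstruction} gives $X_u(t)\le\widehat X_u(t)$ for all $t$, hence $N_R(t)\le\widehat X_v(t)$ for all $t$; therefore $P(\lim_{t\downarrow0}N_R(t)=\infty)\le\mathds{1}\{R_{uv}\text{ is }\Lambda_u\text{-strong}\}$.

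Plugging these two bounds into the inequality above yields the displayed claim, and since each summand lies in $\{0,1\}$ at least one of them equals $1$, which is the corollary. The step I expect to require the most care is making the two couplings rigorous inside the Poisson point process construction: checking that the "migration-as-death" description of $X_u$ in the full process is an exact identity of laws (not merely a stochastic domination) and that the monotone coupling $X_u\le\widehat X_u$ persists through the infinite-particle regime at $t=0$; both follow from the truncation (restriction to $\pi\wedge n$) and consistency argument used to prove \thmref{PPPconstruction}, but they should be spelled out. A minor loose end is the degenerate case in which the $\Lambda_u$-coalescent does not itself come down from infinity, where "$\Lambda_u$-strong" is not covered by \defref{lambdastrong}; this case does not arise in the way the corollary is used, since one then simply shortens the chain in \thmref{converse}.
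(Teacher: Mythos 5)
Your proof is correct, and it takes a mildly but genuinely different route from the paper's. The paper argues by contraposition: assuming neither measure is $\Lambda_u$-strong, it fixes $t>0$, asserts that only finitely many particles enter $v$ before time $t$, and then handles times after $t$ by invoking the fact that the $\Lambda_u$-coalescent comes down from infinity (so $X_u(t)<\infty$ and only finitely many more particles can ever arrive at $v$). You instead work directly with the germ event: since $X_v$ is nondecreasing, the hypothesis is equivalent to $P(\lim_{t\downarrow 0}X_v(t)=\infty)=1$, and the decomposition $X_v=N_M+N_R$ plus a union bound reduces everything to the two germ probabilities, each of which is identified with (for migration, exactly; for reproduction, bounded by) the indicator of $\Lambda_u$-strength via the $0$--$1$ dichotomy. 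This buys you two things: you never need the ``coming down from infinity after time $t$'' step at all, and you make explicit the two couplings that the paper's phrase ``only finitely many particles enter site $v$ before time $t$'' silently relies on --- namely that $(X_u,N_M)$ in the full process has the same law as the migration-only system because reproduction to $v$ does not alter $X_u$, while $N_R$ is dominated by the reproduction-only system because deleting migration only increases $X_u$. Both couplings are routine instances of the consistency argument in the proof of \thmref{PPPconstruction}, as you say. The one caveat you correctly flag is shared with the paper: \defref{lambdastrong} presupposes that the $\Lambda_u$-coalescent comes down from infinity, and the paper's own proof of the corollary uses that hypothesis explicitly, so the statement should be read under that standing assumption; your version degrades gracefully there since the decomposition and union bound are unconditional, and only the identification of the germ probabilities with ``strength'' requires the definition to apply.
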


\begin{proof}
    Suppose both $M_{uv}(dz)$ and $R_{uv}(dz)$ were not $\Lambda_u$-strong. Then if we fix a time $t > 0$, since both $M_{uv}(dz)$ and $R_{uv}(dz)$ are not $\Lambda_u$-strong only finitely many particles enter site $v$ before time $t$ i.e. $X_v(t) < \infty$ a.s. By the fact that $(X_v(t))_{t \geq 0}$ is monotone increasing in $t$ we see that for all $s < t$ we have $X_v(s) < \infty$. Now for $s > t$ we see that since the $\Lambda_u$-coalescent comes down from infinity, $X_u(t) < \infty$ a.s. which means that for all $s > t$ we have that $X_v(s) < \infty$. Thus for all $t > 0$ we have that $P(X_v(t) = \infty) = 0$ which is a contradiction.
\end{proof}

\noindent Now we shall note that the definition of $\Lambda$-strong does not make any reference to the death measure. This is because of the following theorem which essentially states that being $\Lambda$-strong is a property that is independent of the death measure.

\begin{proposition}\label{P:deathdoesntmatter}
    Let $D(dz)$ be an arbitrary measure that controls death with no atom at $1$. If a migration measure $M(dz)$ or reproduction measure $R_{uv}(dz)$ is $\Lambda$-strong then an infinite number of particles migrate or reproduce out of a site with coalescence according to $\Lambda(dz)$ and death according to $D(dz)$ before time $t$ for all $t > 0$.
\end{proposition}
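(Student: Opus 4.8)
The plan is to repeat the argument of the proof of \thmref{betasuffstrong} with the death measure carried along, reducing first to the migration case --- the reproduction case is identical, since reproduction out of $u$ never changes $X_u$, so $X_u$ still evolves as a $\Lambda$-coalescent with death. Fix the two-site coordinated particle system $(X(t))_{t\ge0}$ on $\{u,v\}$ with $\Lambda_u=\Lambda$, $D_u=D$, $M_{uv}=M$, all other measures zero, and $X_u(0)=\infty$, $X_v(0)=0$; the goal is to show that for every $t>0$ infinitely many particles migrate from $u$ to $v$ before time $t$, almost surely. As in Step~4 of the proof of \thmref{betasuffstrong}, write $M_n$ for the event that a migration occurs while $X_u$ equals $n$ and $\mathcal M$ for the total number of migrating particles, so $\sum_n\mathbf 1_{M_n}\le\mathcal M\le\sum_n n\,\mathbf 1_{M_n}$; since $X_u$ comes down from infinity, at most $X_u(\varepsilon)<\infty$ migrations occur after any time $\varepsilon>0$, so it suffices to prove $P(\mathcal M=\infty)=1$, equivalently $P(M_n\ \text{i.o.})=1$. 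Because $\{M_n\ \text{i.o.}\}$ lies in the germ $\sigma$-field of the Feller process $(X_u(t))_{t\ge0}$, Blumenthal's 0-1 law reduces this to showing $P(M_n\ \text{i.o.})>0$.

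I would then compare with the death-free system. Writing $\gamma(n)=nM([0,1])\,E[(1-W)^{n-1}]$ for the migration rate at level $n$ (as in \thmref{betasuffstrong}) and $V_n=\{X_u(s)=n\text{ for some }s\}$, and noting that $X_u$ evolves as a $\Lambda$-coalescent with death according to $D+M$, one has $P(M_n)=P(V_n)\gamma(n)/(\lambda_n+d_n+\gamma(n))$, where $d_n$ denotes the total singleton-loss rate for $D+M$. Since $M$ is $\Lambda$-strong, infinitely many migrations occur almost surely in the two-site system with the same $\Lambda$ and $M$ but $D=0$, so the corresponding events $\widetilde M_n$ satisfy $P(\widetilde M_n\ \text{i.o.})=1$ and hence, by the first Borel-Cantelli lemma, $\sum_n P(\widetilde M_n)=\infty$, with $P(\widetilde M_n)=P(\widetilde V_n)\gamma(n)/(\lambda_n+\gamma(n))$. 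The first task is to transfer this divergence to $\sum_n P(M_n)=\infty$: using the bound $d_n\le n(D+M)([0,1])$ from the computation in \lemref{asymptoticdeathrate}, the extra term $d_n=O(n)$ is negligible against $\lambda_n$ for a $\Lambda$-coalescent that comes down from infinity, so $\gamma(n)/(\lambda_n+d_n+\gamma(n))$ and $\gamma(n)/(\lambda_n+\gamma(n))$ are comparable.

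The main obstacle is the remaining input, namely $\liminf_n P(V_n)>0$ together with comparability of $P(V_n)$ and $P(\widetilde V_n)$, since for general $\Lambda$ we cannot invoke the explicit limit $\alpha-1$ of \lemref{percenthit}. I would establish this by rerunning the renewal-type estimate behind \lemref{percenthit} at the level of an arbitrary coalescence measure that comes down from infinity: the large-jump bounds of \lemref{lambdaregularity} and \lemref{largedeath} show that from any level $m$ the coalescent-with-death overshoots a fixed level $n$ by a summably small amount, so the events of skipping level $n$ do not accumulate to probability one, giving $\liminf_n P(V_n)>0$ (and, comparing these bounds with and without $D$, comparability with $P(\widetilde V_n)$); alternatively, one couples the descents of the systems with and without $D$ so that they coincide until the block count first falls to a moderate value, the extra $D$-transitions forming a vanishing fraction of transitions on that initial stretch. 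Granting $\sum_n P(M_n)=\infty$ and $\liminf_n P(V_n)>0$, the proof closes exactly as in \thmref{betasuffstrong}: the conditional independence of $M_j$ and $M_k$ given $V_k$ for $j>k$ forces the Kochen-Stone ratio to be at least $\liminf_n P(V_n)>0$, so $P(M_n\ \text{i.o.})>0$, hence $=1$ by Blumenthal's 0-1 law, which is the proposition.
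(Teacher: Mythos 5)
There is a genuine gap, and it sits exactly where you flagged ``the main obstacle.'' Your route needs, for an \emph{arbitrary} coalescence measure $\Lambda$ whose coalescent comes down from infinity, a uniform lower bound $\liminf_n P(V_n)>0$ on the probability that the block-counting process with death visits level $n$ (this is what makes the Kochen--Stone ratio bounded below), together with comparability of $P(V_n)$ and $P(\widetilde V_n)$ to transfer $\sum_n P(\widetilde M_n)=\infty$ into $\sum_n P(M_n)=\infty$. But every tool you invoke for this --- \lemref{percenthit}, \lemref{lambdaregularity}, \lemref{largedeath}, and the renewal-theorem limit $\alpha-1$ --- is proved only under the density hypothesis \eqref{eq:density} with $\alpha\in(1,2)$, which supplies the two-sided bounds $\lambda_n\asymp n^\alpha$ and $\zeta_{n,k}\le Ck^{-1-\alpha}$ on which those lemmas run. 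For a general $\Lambda$ there is no limiting step distribution $\zeta$ to feed into Blackwell's renewal theorem, and nothing rules out $\liminf_n P(V_n)=0$ (heuristically $P(V_n)\approx 1/E[\text{jump size near level }n]$, which can degenerate along subsequences for measures with irregular small-$z$ behaviour). Your alternative fix --- coupling the descents with and without $D$ on an initial stretch --- does not help, because the events $M_n$ live at every level $n$, not only on an initial stretch. A secondary unverified step is the claim that $d_n=O(n)$ is negligible against $\lambda_n$: coming down from infinity forces $\gamma_b/b\to\infty$ and hence $\lambda_b\to\infty$, but $\lambda_b/b\to\infty$ does not follow in general, so even the divergence transfer is not secured.

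The paper avoids all quantitative estimates with a soft coupling. Run the Poisson point process construction of \thmref{PPPconstruction}, but instead of removing the particles selected by a death atom, colour them red. Since a pure-death process whose measure has no atom at $1$ does not come down from infinity, almost surely infinitely many labels are never selected by any death atom before time $t$; the blue particles carrying those labels evolve exactly as a two-site system with coalescence $\Lambda$, migration $M$, and \emph{no} death started from infinitely many particles, so $\Lambda$-strength applies verbatim to give infinitely many migrations, and this lower-bounds the number of migrations in the original system. If you want to salvage your route you would first have to prove $\liminf_n P(V_n)>0$ for every $\Lambda$ coming down from infinity (with death), which is a substantial and possibly false estimate; the restriction coupling is both shorter and strictly more general.
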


\begin{proof}
    We shall first consider the case of migration. First we shall define the coordinated particle system $(X(t))_{t \geq 0}$ on sites $\{u,v\}$ with measures $\Lambda_u(dz) = \Lambda(dz)$, $M_{uv}(dz) = M(dz)$, $D_u(dz) = D(dz)$, $X_u(0) = \infty$, and $X_v(0) = 0$. Now similarly we can define a coordinated particle system $(\widehat{X}(t))_{t\geq 0}$ on a single site with $\widehat{X}(0) = \infty$ and only death according to $D(dz)$ where we use the same Poisson point process for the death events in $(X(t))_{t \geq 0}$ and $(\widehat{X}(t))_{t\geq 0}$. By Proposition 3.10 in \cite{CPS2021} we see $(\widehat{X}(t))_{t\geq 0}$ will not come down from infinity. Now the main idea of the rest of the proof is that since $(\widehat{X}(t))_{t\geq 0}$ does not come down from infinity and the death Poisson point processes between $(\widehat{X}(t))_{t\geq 0}$ and $(X(t))_{t\geq 0}$ are coupled, there are an infinite number of particles in $(X(t))_{t\geq 0}$ which were never involved in a death event up to time $t = 1$. Thus the dynamics of the particles that were never selected for a death event behave like a coordinated particle system $(Y(t))_{t \geq 0}$ which is the same as $(X(t))_{t \geq 0}$ except with no death at site $u$. The result would then follow from the fact that $M_{uv}(dz)$ is $\Lambda_u$-strong. To make this rigorous we shall introduce the following modification of the Poisson point process construction.
    \medskip

    \noindent In this construction we shall consider a process $(\Pi(t))_{t \geq 0}$ taking values in the power set $\mathcal{P}(\N \times \{r,b\} \times\{u,v\})$ from which we can recover $(X(t))_{t \geq 0}$. In this process each particle is labeled by a positive integer and has a flag for its color and location in addition to its label. In this labeling scheme $r$ represents red and $b$ represents blue. Now let $C^{(u)}, D^{(u)}, M^{(uv)}, C_{s_1,s_2}^{(u)}, D_s^{(u)}$, and $ M_s^{(uv)}$ for $s_1, s_2, s \in \N$ with $s_1 < s_2$ be the same Poisson point processes that control $(X(t))_{t \geq 0}$. The definitions for these processes can be found in the Poisson point process construction \thmref{PPPconstruction}. Setting $\Pi(0) = \N \times \{b\} \times \{u\}$, the dynamics of the process $(\Pi(t))_{t \geq 0}$ can now be defined as follows.

    \begin{itemize}
        \item At an arrival of an atom $(t,z, \xi)$ in $C^{(u)}$ we shall merge all the blue particles at site $u$ with $\xi_i = 1$ into a single blue particle at site $u$ with label $i^*$ where
        \begin{align*}
            i^* = \min \{i \colon \xi_i = 1 \text{ and particle $i$ is blue and is located at site $u$}\}.
        \end{align*}

        \item At an arrival of an atom $(t,z, \xi)$ in $D^{(u)}$ we shall color all the particles at site $u$ with $\xi_i = 1$ red.

        \item At an arrival of an atom $(t,z, \xi)$ in $M^{(uv)}$ we shall migrate all the blue particles at site $u$ with $\xi_i = 1$ to site $v$.
    \end{itemize}

    \noindent We similarly define the independent actions for arrivals of atoms $t$ in in the Poisson point processes $C_{s_1,s_2}^{(u)}, D_s^{(u)},$ and $M_s^{(uv)}$ where we coalesce blue particles $s_1$ and $s_2$ at site $u$, color particle $s$ red, and migrate the particle $s$ to site $v$ if it is colored blue. Now we clearly can see that a process with the same law as $(X(t))_{t \geq 0}$ can be recovered by counting the number of blue particles at each site in $(\Pi(t))_{t \geq 0}$ as the red particles represent particles that have died. Using the process $(\Pi(t))_{t\geq 0}$ we are now able to prove the result. Recall that we noted the process $(\widehat{X}(t))_{t \geq 0}$ does not come down from infinity which means that $\widehat{X}(t) = \infty$ for all $t > 0$. Thus for every $t > 0$ there exists an infinite subset $\mathcal{S}_t \subseteq \N$ such that for all $n \in \mathcal{S}_t$, we have $\xi_n = 0$ for all atoms $(s,z, \xi)$ in $D^{(u)}$ with $s \leq t$. Similarly for all $n \in \mathcal{S}_t$, the Poisson point process $D_n^{(u)}$ will have no atoms in $[0,t]$. This set $\mathcal{S}_t$ represents the particles alive at time $t$. Now let $(Y(s))_{s \in [0,t]}$ be the process which counts the blue particles with labels in $\mathcal{S}_t$ in $(\Pi(t))_{t \geq 0}$. We can recognize that for all $s \in [0,t]$ we have $Y(s) \leq X(s)$ by construction. 
    \medskip
    
    \noindent Now by the definition of $(Y(s))_{s \in [0,t]}$ we can see that this process is a coordinated particle system with $Y(0) = |S_t| = \infty$, and measures defined by $\Lambda_u(dz)$ and $M_{uv}(dz)$. In particular, we see that $(Y(s))_{s \in [0,t]}$ behaves like $(X(t))_{t \geq 0}$ except without death. Now we can apply the fact that $M_{uv}(dz)$ is $\Lambda_u$-strong to see that $Y_v(t) = \infty$. This means that an infinite number of particles would have migrated from site $u$ to $v$ in $(X(t))_{t \geq 0}$ before time $t$ as $X_v(t) \geq Y_v(t) \geq \infty$. Now since $t > 0$ was arbitrary we have thus proven the claim.
    \medskip
    
    \noindent Now we can follow the exact same argument for the reproduction measure $R_{uv}(dz)$ to get the full result.
\end{proof}

\noindent We shall now begin working towards a proof of \propref{arblambdastrong} where we shall require two lemmas. The first lemma compares the number of particles in a coalescent with death to a coalescent without death.

\begin{lemma}\label{L:removedeath}
    Let $(X(t))_{t \geq 0}$ be a $\Lambda$-coalescent with death according to $D(dz)$ where $D(\{0\}) = 0$. Similarly, let $(\widehat{X}(t))_{t \geq 0}$ be a $\Lambda_*$-coalescent where $\Lambda_*(dz) = 2\Lambda(dz) + D(dz)$. If $X(0) = \widehat{X}(0)$, then there exists a coupling of $(X(t))_{t \geq 0}$ and $(\widehat{X}(t))_{t \geq 0}$ and a random time $t^* > 0$ a.s. such that
    \begin{align*}
        X(t)+1 \geq \widehat{X}(t),
    \end{align*}

    \noindent for all $t \leq t^*$.
\end{lemma}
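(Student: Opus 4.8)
The plan is to realize both processes on the Poisson point process construction of \thmref{PPPconstruction}, arranging that every death event of $X$ is simultaneously a coalescence event of $\widehat{X}$ involving the same selection of particles — so that $\widehat{X}$ loses exactly one fewer particle than $X$ at that event — and then to let $t^{*}$ be the first time $X$ actually loses a particle to death. Concretely, write $\Lambda = c\delta_{0}+\Lambda^{+}$; couple the Kingman parts by taking $\widehat{X}$'s rate-$2c$ pair-merge clocks to be superpositions of $X$'s rate-$c$ clocks with independent rate-$c$ clocks, which fits the pattern below, so in the exposition one may take $c=0$. Let $\mathcal{C}$ be the coalescence PPP driving $X$, with intensity $dt\otimes z^{-2}\Lambda(dz)\otimes P_{z}(d\xi)$, and $\mathcal{D}$ its death PPP, with intensity $dt\otimes z^{-1}D(dz)\otimes P_{z}(d\xi)$ (the hypothesis $D(\{0\})=0$ is exactly what makes $z^{-1}D(dz)$ a legitimate intensity and keeps $\Lambda_{*}=2\Lambda+D$ free of a $z^{-2}\delta_{0}$ term). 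Build the coalescence PPP of $\widehat{X}$, which must have intensity $dt\otimes z^{-2}(2\Lambda+D)(dz)\otimes P_{z}(d\xi)$, as the superposition of three independent pieces: a copy of $\mathcal{C}$; an independent PPP $\mathcal{C}'$ with intensity $dt\otimes z^{-2}\Lambda(dz)\otimes P_{z}(d\xi)$; and a PPP $\mathcal{C}''$ with intensity $dt\otimes z^{-2}D(dz)\otimes P_{z}(d\xi)$. Finally recover $\mathcal{D}$ from $\mathcal{C}''$ by thinning: mark each atom $(t,z,\xi)$ of $\mathcal{C}''$ independently ``active'' with probability $z$; by the marking theorem the active atoms form a PPP of intensity $dt\otimes z^{-1}D(dz)\otimes P_{z}(d\xi)$, and we declare these to be the atoms of $\mathcal{D}$. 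Thus each death atom of $X$ is, with the same proportion and the same $\xi$, a coalescence atom of $\widehat{X}$, while the inactive atoms of $\mathcal{C}''$ together with all of $\mathcal{C}'$ act only on $\widehat{X}$.

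Let $t^{*}$ be the first time at which an atom of $\mathcal{D}$ selects at least one particle currently present in $X$ (so that $X$ strictly decreases). On $[0,t^{*})$ the process $X$ evolves as a pure $\Lambda$-coalescent, while $\widehat{X}$ is driven by all of $X$'s coalescence atoms together with the further coalescence atoms of $\mathcal{C}'$ and $\mathcal{C}''$; since $X(0)=\widehat{X}(0)$, the monotone coupling argument used for part (b) of \thmref{PPPconstruction} (a shared atom preserves the ordering of the counts, and extra coalescence atoms only decrease the count) gives $\widehat{X}(t)\le X(t)$ for all $t<t^{*}$. At time $t^{*}$ the responsible atom $(t^{*},z,\xi)\in\mathcal{D}$ removes $K_{X}=\sum_{p=1}^{X(t^{*}-)}\xi_{p}\ge 1$ particles from $X$ but merges only $K_{\widehat{X}}=\sum_{p=1}^{\widehat{X}(t^{*}-)}\xi_{p}$ particles of $\widehat{X}$ into one. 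Using $\widehat{X}(t^{*}-)\le X(t^{*}-)$ one has $K_{\widehat{X}}\le K_{X}$ and $K_{X}-(K_{\widehat{X}}-1)^{+}\le (X(t^{*}-)-\widehat{X}(t^{*}-))+1$ (split on whether $K_{\widehat{X}}=0$ or $K_{\widehat{X}}\ge 1$), whence
\[
\widehat{X}(t^{*})-X(t^{*})=\bigl(\widehat{X}(t^{*}-)-X(t^{*}-)\bigr)+\bigl(K_{X}-(K_{\widehat{X}}-1)^{+}\bigr)\le 1 .
\]
Stopping the comparison at $t^{*}$ therefore yields $X(t)+1\ge \widehat{X}(t)$ for all $t\le t^{*}$.

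It remains to check $t^{*}>0$ almost surely. With a finite common initial value $X(0)=\widehat{X}(0)=n$, the process $X$ never exceeds $n$ before $t^{*}$, so the atoms of $\mathcal{D}$ that can be responsible for $t^{*}$ all satisfy $\sum_{p=1}^{n}\xi_{p}\ge 1$, and the rate of such atoms is $\int_{0}^{1}z^{-1}\bigl(1-(1-z)^{n}\bigr)\,D(dz)\le nD([0,1])<\infty$; hence the first of them occurs at a strictly positive time and $t^{*}>0$. (When the common initial value is $\infty$ one passes to the monotone limit of the finite-level versions, noting that on the event that $X$ itself does not come down from infinity the claim is trivial with $t^{*}=\infty$.) The main point of the proof is the thinning step turning the $z^{-2}D(dz)$-coalescence of $\widehat{X}$ into the $z^{-1}D(dz)$-death of $X$ while preserving the joint law of $(z,\xi)$; once that is set up, the rest reduces to the elementary bookkeeping that a $k$-fold merger loses one fewer particle than a $k$-fold death, and the only delicate aspect is checking that one such event cannot push the gap past $1$ — which is precisely why the comparison is asserted only up to the first death.
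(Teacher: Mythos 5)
Your coupling itself is legitimate (the superposition $\mathcal{C}\cup\mathcal{C}'\cup\mathcal{C}''$ has the right intensity, the thinning of $\mathcal{C}''$ by $z$ does produce a death process of intensity $dt\otimes z^{-1}D(dz)\otimes P_z(d\xi)$, and your one-step bookkeeping at a single death atom is correct), but the proof has a genuine gap: your $t^*$, the first time a death atom selects a live particle of $X$, need not be positive when $X(0)=\infty$. Conditionally on the coalescence driving noise, the death events hitting at least one particle form an inhomogeneous Poisson process with rate $d_{X(s)}=\int_0^1 z^{-1}\bigl(1-(1-z)^{X(s)}\bigr)D(dz)\asymp \int_0^1\min\bigl(X(s),z^{-1}\bigr)D(dz)$, and when the $\Lambda$-coalescent comes down from infinity (say at Kingman speed $X(s)\asymp 1/s$) one computes $\int_0^\epsilon d_{X(s)}\,ds=\infty$ for every $\epsilon>0$ whenever $\int_0^1\log(1/z)\,D(dz)=\infty$; in that case infinitely many death events occur in every interval $(0,\epsilon)$ and $t^*=0$ almost surely. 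This is not a pathological corner: in the paper the lemma is invoked inside \propref{arblambdastrong} with $D=M_{uv}$ under the hypothesis $E[-\log Y]=\infty$, which is exactly the regime where your $t^*$ vanishes. Your parenthetical remark about ``passing to the monotone limit of the finite-level versions'' does not repair this, since the finite-level first-death times decrease in $n$ and their infimum is the degenerate $t^*$ above; and the comparison genuinely cannot be pushed past the first death in your scheme, because each death atom can enlarge the gap $\widehat{X}-X$ by one more unit, so after $m$ deaths you only get $X(t)+m\ge\widehat{X}(t)$.

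The paper's proof avoids this by never stopping at a death: it adjoins a cemetery particle labelled $0$ and replaces every death event by a merger of the dying particles with particle $0$, giving an auxiliary coalescent $\mathring{X}$ with $X(t)+1=\mathring{X}(t)$ identically in $t$. It then checks that the merger rates of $\mathring{X}$ for blocks of size at least $k$ are dominated by those of the $\Lambda_*$-coalescent (this is where the factor $2$ and the reduction to $\Lambda$ supported on $[0,1/2]$ enter, via $\binom{m}{\ell}=\binom{m-1}{\ell-1}+\binom{m-1}{\ell}$ and $(1-z)^{-1}\le 2$), and applies Lemma 7.2 of the Angel--Berestycki--Limic reference to get $\mathring{X}(t)\ge\widehat{X}(t)$ for all $t$. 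The random time $t^*$ in the statement arises only from undoing the truncation of $\Lambda$ to $[0,1/2]$, and it is positive because atoms with $z>1/2$ arrive at the finite rate $\int_{(1/2,1]}z^{-2}\Lambda(dz)$. If you want to keep your construction, the fix is essentially to adopt this idea: instead of letting death atoms kill particles of $X$, let them merge the selected particles into a single surviving block, which keeps the discrepancy at exactly one forever rather than incrementing it at each death.
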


\begin{proof}
    The main tool used to prove this lemma is Lemma 7.2 in \cite{Angel2012}. Lemma 7.2 in \cite{Angel2012} states that as long as the total rate of coalescence of subsets of size at least $k$ of an arbitrary coalescent process is bounded by the total rate of coalescence of subsets of size at least $k$ in a $\Lambda$-coalescent for all $k \geq 2$, then we can couple the two processes such that the $\Lambda$-coalescent has fewer particles for all $t$.
    \medskip

    \noindent We shall begin by proving the result when $X(0) = n$ and when $\Lambda(dz)$ is supported on $[0,1/2]$. We start by defining a coalescent process $(\mathring{X}(t))_{t \geq 0}$ on a set of particles labeled $\{0, 1, \ldots, n\}$ in the following way. First decompose the measure $\Lambda(dz) = c\delta_0 + \Lambda^+(dz)$ where $\Lambda^+(dz)$ has no atom at $0$. Then define the Poisson point processes $\pi(\cdot)$ and $\overline{\pi}(\cdot)$ on $\R_+ \times [0,1]$ to have intensity measures $dt \otimes z^{-2}\Lambda^+(dz)$ and $dt \otimes z^{-1}D(dz)$ respectively. Now we can define the dynamics of $(\mathring{X}(t))_{t \geq 0}$ as follows.

    \begin{itemize}
        \item If $(t,z)$ is an atom in $\pi(\cdot)$, flip a $z$-coin for each particle $i \neq 0$. Then merge all the particles whose coin came up heads into a particle which is labeled with the smallest label of those merged.

        \item If $(t,z)$ is an atom in $\overline{\pi}(\cdot)$, flip a $z$-coin for each particle $i \neq 0$. Now merge all the particles whose came up heads along with the particle labeled $0$ into a particle which will also be labeled $0$.

        \item Each pair of particles $i<j$, with $i \neq 0$, merge to form a particle labeled $i$ at rate $c$.
    \end{itemize}

    \noindent We realize for this construction $(\mathring{X}(t))_{t \geq 0}$ is essentially the process $(X(t))_{t \geq 0}$, except when particles die they are merged with the particle $0$ instead of being removed from the system. It is thus clear that we can couple the two processes such that
    \begin{align*}
        X(t) + 1 = \mathring{X}(t),
    \end{align*}

    \noindent for all $t \geq 0$. Now to apply Lemma 7.2 of \cite{Angel2012} we need to upper bound the total rate that subsets of particles with size at least $k$ are coalescing. To do this we shall define $\overline{\lambda}_{m,S}$ to be the rate at which a subset $S$ of the particles coalesce in $(\mathring{X}(t))_{t \geq 0}$ when there are $m$ particles present. We then see that the total rate of coalescence for subsets of size at least $k$ for $2 \leq k \leq m$ is
    \begin{align}\label{eq:splitrates}
        \sum_{S \colon |S| \geq k} \overline{\lambda}_{m,S} &= \sum_{S \colon |S| \geq k, 0 \in S} \overline{\lambda}_{m,S} + \sum_{S \colon |S| \geq k, 0 \notin S} \overline{\lambda}_{m,S}.
    \end{align}

    \noindent Now when $0 \in S$, $\overline{\lambda}_{m,S}$ corresponds to the rate of a death event that involves $|S|-1$ out of $m-1$ particles. Similarly, when $0 \notin S$ this corresponds to the rate of a coalescence event that involves $|S|$ particles out of the $m-1$ particles. Thus we see that \eqref{eq:splitrates} is equal to
    \begin{align}\label{eq:totalrateS}
        \sum_{\ell = k}^m \binom{m-1}{\ell - 1}\int_0^1 z^{\ell - 1}(1-z)^{m-1 - (\ell - 1)}\frac{D(dz)}{z} + \sum_{\ell = k}^{m-1} \binom{m-1}{\ell}\int_0^1 z^{\ell}(1-z)^{m-1 -\ell}\frac{\Lambda(dz)}{z^2}.
    \end{align}

    \noindent Now using that fact that $\binom{m}{\ell} =\binom{m-1}{\ell - 1} + \binom{m-1}{\ell}$ and the fact that $\Lambda(dz)$ is supported on $[0,1/2]$, we may upper bound the quantity in \eqref{eq:totalrateS} by
    \begin{align}\label{eq:rateinlambda*}
        \sum_{\ell = k}^m \binom{m}{\ell}\int_0^1 z^{\ell}(1-z)^{m- \ell}\frac{D(dz)}{z^2} + \sum_{\ell = k}^{m} \binom{m}{\ell}\int_0^1 z^{\ell}(1-z)^{m -\ell}\frac{2\Lambda(dz)}{z^2}.
    \end{align}

    \noindent We can now note that the quantity in \eqref{eq:rateinlambda*} is precisely the rate of mergers of size at least $k$ in the $\Lambda_*$-coalescent. Thus if we let $\lambda_{m, \ell}$ denote the rate that $\ell$ blocks of $m$ coalesce in a $\Lambda_*$-coalescent we have shown
    \begin{align*}
        \sum_{S \colon |S| \geq k} \overline{\lambda}_{m,S} \leq \sum_{\ell\geq k} \binom{m}{\ell}\lambda_{m,\ell}.
    \end{align*}

    \noindent Thus we may apply Lemma 7.2 in \cite{Angel2012} to couple $(\mathring{X}(t))_{t\geq 0}$ to the $\Lambda_*$-coalescent $(\widehat{X}(t))_{t\geq 0}$ such that $\mathring{X}(t) \geq X(t)$ for all $t$. Thus when $X(0) = n$ we have the desired inequality that
    \begin{align*}
        X(t) +1 = \mathring{X}(t) \geq \widehat{X}(t).
    \end{align*}

    \noindent Now the case for $X(0) = \infty$ follows by taking $n \to \infty$ in the first case. Now finally for the general $\Lambda(dz)$ we see that by Lemma 3.1 in \cite{betasmalltime} there exists a coupling of the $\Lambda$-coalescent $(X(t))_{t\geq 0}$ and the $\Lambda(dz)\mathds{1}_{[0,1/2]}(z)$ coalescent $(X'(t))_{t\geq 0}$  random time $t^* > 0$ such that for all $t \leq t^*$, we have $X(t) = X'(t)$. Thus by the special case when $\Lambda(dz)$ is supported on $[0,1/2]$ we have 
    \begin{align*}
        X(t) + 1 = X'(t) +1 \geq \widehat{X}(t)
    \end{align*}

    \noindent for all $t \leq t^*$ as desired.
\end{proof}

\noindent The second lemma needed to prove \propref{arblambdastrong} compares the the number of particles in an arbitrary $\Lambda$-coalescent to a sped up Kingman coalescent, i.e. mergers occur at some fixed positive rate. 

\begin{lemma}\label{L:kingmancontrol}
    Suppose $(X(t))_{t \geq 0}$ is a $\Lambda$-coalescent and $(\widehat{X}(t))_{t \geq 0}$ is a $4\Lambda([0,1])\delta_0$-coalescent, both of which are started with infinitely many particles. Then there exists a random time $t^* > 0$ a.s. such that for all $s \leq t^*$,
    \begin{align*}
        X(s) > \widehat{X}(s)
    \end{align*}
\end{lemma}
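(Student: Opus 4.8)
The plan is to avoid a pathwise coupling altogether and instead compare the two processes through the speed of coming down from infinity established in \cite{speed}. First I would dispose of the degenerate case where the $\Lambda$-coalescent $(X(t))_{t\ge 0}$ does not come down from infinity. Since the block count of a coalescent is non-increasing in $t$, the event $\{X(s)<\infty\text{ for all }s>0\}$ coincides with $\{X(s)<\infty\text{ for all }0<s\le\varepsilon\}$ for every $\varepsilon>0$, hence lies in the germ $\sigma$-field; by Blumenthal's $0$--$1$ law it then has probability $0$, so almost surely there is a random $\tau>0$ with $X(s)=\infty$ for all $s\in(0,\tau)$. As $(\widehat X(t))_{t\ge 0}$ is a sped-up Kingman coalescent it does come down from infinity, so $\widehat X(s)<\infty$ for $s>0$, and $X(s)>\widehat X(s)$ on $(0,\tau)$; take $t^*=\tau/2$. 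From now on assume $X$ comes down from infinity (recall the standing assumption of this section that $\Lambda$ has no atom at $1$).

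For the main case, for a finite measure $\nu$ on $[0,1)$ write $\psi_\nu(q)=\int_0^1(e^{-qz}-1+qz)\,z^{-2}\,\nu(dz)$ and let $v_\nu$ be the associated speed function, i.e.\ the decreasing bijection of $(0,\infty)$ determined by $\int_{v_\nu(t)}^{\infty}\psi_\nu(q)^{-1}\,dq=t$ (finite precisely when the $\nu$-coalescent comes down from infinity). By \cite{speed}, the block count $N^\nu_t$ of any $\nu$-coalescent that comes down from infinity satisfies $N^\nu_t/v_\nu(t)\to 1$ almost surely as $t\downarrow 0$; I would apply this to $\nu=\Lambda$ and to $\nu=\widehat\Lambda:=4\Lambda([0,1])\delta_0$, giving $X(t)/v_\Lambda(t)\to 1$ and $\widehat X(t)/v_{\widehat\Lambda}(t)\to 1$ a.s. The one estimate needed is the pointwise inequality $e^{-y}-1+y\le y^2/2$ for $y\ge 0$ (true since $g(y):=e^{-y}-1+y-y^2/2$ has $g(0)=g'(0)=0$ and $g''(y)=e^{-y}-1\le 0$), which applied with $y=qz$ gives $\psi_\Lambda(q)\le\tfrac12\Lambda([0,1])\,q^2$. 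Since $\psi_{\widehat\Lambda}(q)=2\Lambda([0,1])\,q^2$ exactly, one gets $v_{\widehat\Lambda}(t)=(2\Lambda([0,1])\,t)^{-1}$, while $t=\int_{v_\Lambda(t)}^{\infty}\psi_\Lambda(q)^{-1}\,dq\ge 2(\Lambda([0,1])\,v_\Lambda(t))^{-1}$ yields $v_\Lambda(t)\ge 2(\Lambda([0,1])\,t)^{-1}=4\,v_{\widehat\Lambda}(t)$ for all $t>0$. Combining with the two a.s.\ limits, $\liminf_{t\downarrow 0}X(t)/\widehat X(t)\ge 4>1$ a.s., so almost surely there is a random $t^*>0$ with $X(s)>\widehat X(s)$ (indeed $X(s)\ge 2\widehat X(s)$) for all $s\in(0,t^*]$; note $X(0)=\widehat X(0)=\infty$.

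I expect the main point to be the choice of tool rather than any computation. A pathwise domination via Lemma 7.2 of \cite{Angel2012}, in the spirit of \lemref{removedeath}, is not available here: that lemma would require the total rate of mergers of size at least $k$ in $X$ to be bounded by the corresponding rate in $\widehat X$ for every $k\ge 2$, but for $k\ge 3$ the latter is $0$ (the Kingman coalescent has only binary mergers) while the former is generically positive. This is why I would route the argument through the a.s.\ speed asymptotics of \cite{speed} instead; the constant $4$ enters only to upgrade the crude bound $\psi_\Lambda(q)\le\tfrac12\Lambda([0,1])q^2$ to a strict (in fact factor-$4$) gap between the two speeds, which is exactly what produces the strict inequality in the conclusion. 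One should also take care to invoke the \emph{almost sure} form of the convergence in \cite{speed}, since a single random time $t^*$ valid on a whole interval is needed, not merely a fixed-time comparison.
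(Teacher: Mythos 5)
Your argument is correct and follows essentially the same route as the paper: both compare the two processes through the almost sure small-time speed asymptotics of \cite{speed} rather than a pathwise coupling, with the factor $4$ providing the strict inequality. The only difference is that you re-derive the lower bound $v_\Lambda(t)\ge 2/(\Lambda([0,1])t)$ from Theorem 1 of \cite{speed} and the elementary bound $\psi_\Lambda(q)\le\tfrac12\Lambda([0,1])q^2$, whereas the paper simply cites Corollary 3 of \cite{speed} for the same estimate.
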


\begin{proof}
    To prove this lemma we first let $c = \Lambda([0,1])$ and let $\varepsilon > 0$ be small enough such that $\frac{4(1-\varepsilon)}{(1+\varepsilon)} > 1$. Then by Corollary 3 in \cite{speed}, there exists a random $t^*_1 > 0$ such that for all $s \leq t_1^*$,
    \begin{align}\label{eq:asineq}
        X(s) \geq \frac{2}{cs}(1-\varepsilon)\quad \text{ a.s.}
    \end{align}

    \noindent Now we define $(\widehat{X}(t))_{t \geq 0}$ to be a Kingman coalescent with mergers at rate $4c$. By Theorem 1 in \cite{speed} we have that $\lim_{t \to 0} \widehat{X}(t) (1/2ct)^{-1} = 1$ a.s. This means that there is a random $t_2^*$ such that for all $s \leq t^*_2$,
    \begin{align*}
        \left|\frac{\widehat{X}(s)}{(2cs)^{-1}} - 1\right| < \varepsilon\quad \text{ a.s.}
    \end{align*}

    \noindent Rearranging the inequality above results in the inequality
    \begin{align*}
        \widehat{X}(s) < \frac{1+\varepsilon}{2cs} \quad \text{ a.s.}
    \end{align*}

    \noindent Combining this inequality with \eqref{eq:asineq}, we get that for all $s \leq t^* = \min\{t_1^*, t_2^*\}$,
    \begin{align*}
        X(s) \geq \frac{2}{cs}(1-\varepsilon) = \frac{4(1-\varepsilon)(1+\varepsilon)}{2cs(1+\varepsilon)} > \frac{4(1-\varepsilon)}{(1+\varepsilon)} \widehat{X}(s) \geq \widehat{X}(s)\quad \text{a.s.}
    \end{align*}

    \noindent where the last inequality uses that $\frac{4(1-\varepsilon)}{(1+\varepsilon)} > 1$. Thus we have proved the claim.
\end{proof}

\noindent We shall note that this lemma can be slightly generalized to two arbitrary $\Lambda$-coalescents where one has a slower speed of coming down from infinity than the other with essentially the same proof. Now as we are able to prove the theorem which follows quickly from the above lemma.

\begin{proof}[Proof of \propref{arblambdastrong}]
    Fix $\Lambda(dz)$ to be an arbitrary coalescent measure. Now define $(X(t))_{t \geq 0}$ be a coordinated particle system on sites $\{u,v\}$ where $\Lambda_u(dz) = \Lambda(dz)$, $M_{uv}(dz) = M(dz)$, $X_u(0) = \infty$, and $X_v(0) = 0$. We shall set all other measures to $0$. Now we have two cases, one where $M_{uv}(dz)$ has an atom at $0$, and one where it does not.
    \medskip

    \noindent For the first case, suppose that $M_{uv}(dz)$ has an atom at $0$. Now decompose the measure $M_{uv}(dz)$ by considering $M_{uv}(dz) = m_{uv}\delta_0 + M_{uv}^+(dz)$ where $M_{uv}^+(dz)$ has no atom at $0$ and $m_{uv} > 0$. We can then see that the number of particles that are able to migrate from site $u$ to site $v$ in $(X(t))_{t \geq 0}$ is lower bounded by the number of particles that are able to migrate in a coordinated particle system where instead of particles migrating due to atoms in the Poisson point process corresponding to $M_{uv}^+(dz)$, they are killed. More precisely, we define the coordinated particle system $(\widehat{X}(t))_{t \geq 0}$ which is controlled by the same measures as $(X(t))_{t \geq 0}$ except we shall set $\widehat{M}_{uv}(dz) = m_{uv}\delta_0$ and $\widehat{D}_u(dz) = M_{uv}^+(dz)$. Now if we use the same Poisson point process for the coordinated death in $(\widehat{X}(t))_{t \geq 0}$ and the coordinated migrations in $(X(t))_{t \geq 0}$ we have
    \begin{align*}
        X_v(t) \geq \widehat{X}_v(t) \quad \text{a.s. for all $t \geq 0$}.
    \end{align*}

    \noindent Now by \propref{deathdoesntmatter} we can see that $\widehat{X}_v(t) = \infty$ for all $t \geq 0$ if the measure $\widehat{M}_{uv}(dz) = m_{uv}\delta_0$ is $\Lambda$-strong. Now the fact that $m_{uv}\delta_0$ is $\Lambda$-strong for any finite measure $\Lambda(dz)$ is a corollary of Lemma 5.3 in \cite{Angel2012} which thus proves this case.
    \medskip
    
    \noindent For the second case, suppose that $M_{uv}(\{0\}) = 0$. We shall begin to prove this case by lower bounding the number of particles at site $u$ with a sped up Kingman coalescent with death. To do this we see $(X_u(t))_{t \geq 0}$ behaves like a $\Lambda$-coalescent with death according to $M_{uv}(dz)$. We thus can apply \lemref{removedeath} to see that there exists a random time $t^*_1 > 0$ such that for all $s \leq t^*_1$ we have
    \begin{align}\label{eq:plusone}
        X_u(s) + 1  \geq \mathring{X}(s),
    \end{align}

    \noindent where $(\mathring{X}(t))_{t \geq 0}$ is a $\Lambda_*$-coalescent as described in \lemref{removedeath}. Now we shall apply \lemref{kingmancontrol} to the process $(\mathring{X}(t))_{t \geq 0}$. This gives us another random time $t^*_2 > 0$ and a $4\Lambda_*([0,1])\delta_0$-coalescent $(X'(t))_{t \geq 0}$ such that for all $s \leq t^*_2$ we have
    \begin{align}\label{eq:strict}
        \mathring{X}(s) > X'(s).
    \end{align}

    \noindent Now noting that $(X_u(t))_{t \geq 0}$ and $(X'(t))_{t \geq 0}$ are integer valued processes, combining the inequalities in \eqref{eq:plusone} and \eqref{eq:strict} result in
    \begin{align*}
        X_u(s) \geq X'(s),
    \end{align*}

    \noindent for all $s \leq t^* = \min\{t_1^*, t_2^*\}$. Finally, we can construct a coordinated particle system $(\widehat{X}(t))_{t \geq 0}$ on two sites $\{u,v\}$ with coalescence at site $u$ controlled by $\widehat{\Lambda}_u(dz) = 4\Lambda_*([0,1])\delta_0$ and migrations from $u$ to $v$ according to $\widehat{M}_{uv}(dz) = M_{uv}(dz)$ such that for all $s \leq t^*$ we have
    \begin{align*}
        X_u(s) \geq X'(s) \geq \widehat{X}_u(s),
    \end{align*}

    \noindent and we can furthermore assume that the Poisson point processes that control migrations in $(X(t))_{t \geq 0}$ and $(\widehat{X}(t))_{t \geq 0}$ are the same. This can be done by using the Poisson point process construction to construct $(\widehat{X}(t))_{t \geq 0}$ out of the $4\Lambda_*([0,1])\delta_0$-coalescent $(X'(t))_{t \geq 0}$ by using the same Poisson point processes for migration as for $(X(t))_{t \geq 0}$. To recap we have thus shown that there exists a random time $t^* > 0$ such that for all $s \leq t^*$, $X_u(s) \geq \widehat{X}_u(s)$ and the migrations for $(X(t))_{t \geq 0}$ and $(\widehat{X}(t))_{t \geq 0}$ are coupled. Now the result that $X_v(t) = \infty$ follows for $t < t^*$ as the migration measure $\widehat{M}_{uv}(dz)$ is $4\Lambda_*([0,1])\delta_0$-strong, and more particles will migrate from $u$ to $v$ in $(X(t))_{t \geq 0}$ than $(\widehat{X}(t))_{t \geq 0}$ as $X_u(s) \geq \widehat{X}_u(s)$ for all $s \leq t^*$ and the migrations are coupled. Now as $(X_v(t))_{t \geq 0}$ is monotone increasing in $t$ we have that $X_v(t) = \infty$ for all $t > 0$ which proves the result. 
\end{proof}

\noindent We shall note that if we just treat a reproduction measure as migration, by the exact same proof as in \propref{arblambdastrong} we get the same result for reproduction, which leads to the following proposition.

\begin{proposition}
    If a reproduction measure $R_{uv}(dz)$ satisfies $R_{uv}(\{0\}) > 0$ or $E[-\log(Y)] = \infty$ where $Y \sim R_{uv}([0,1])^{-1}R_{uv}(dz)$, then $R_{uv}(dz)$ is $\Lambda$-strong for all coalescent measures $\Lambda(dz)$.
\end{proposition}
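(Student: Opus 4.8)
The plan is to re-run the proof of \propref{arblambdastrong} with ``migration out of $u$'' replaced throughout by ``reproduction from $u$ placing offspring at $v$'', and the substance of this proposal is to record why every ingredient of that proof survives the substitution. The key structural point is that in the two-site system defining $\Lambda$-strength for $R_{uv}$ — namely $(X(t))_{t\ge 0}$ on $\{u,v\}$ with $\Lambda_u=\Lambda$, $R_{uv}=R$, $X_u(0)=\infty$, $X_v(0)=0$, and all other measures $0$ — the counting process $(X_u(t))_{t\ge 0}$ is \emph{exactly} a $\Lambda$-coalescent, since a reproduction event leaves the reproducing particles in place at $u$ and merely adds offspring at $v$; in particular it comes down from infinity. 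Moreover, when $n$ particles sit at $u$, the rate at which a reproduction event puts at least one offspring at $v$ is
\[
\int_0^1 z^{-1}\bigl(1-(1-z)^n\bigr)\,R_{uv}(dz)=n\,R_{uv}([0,1])\,E\bigl[(1-W)^{n-1}\bigr],
\]
which is precisely the form of the migration rate $\gamma(n)$ computed in the proof of \thmref{betasuffstrong}. Hence every rate estimate used in \propref{arblambdastrong} transfers verbatim, and the reproduction case is in fact formally simpler because there is no ``death at $u$'' term to control, so \lemref{removedeath} is not needed.

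First I would treat the case $R_{uv}(\{0\})>0$. Decompose $R_{uv}=r_{uv}\delta_0+R_{uv}^+$ with $R_{uv}^+$ atomless at $0$ and $r_{uv}>0$. Coupling the coordinated reproductions of $(X(t))$ with the coordinated deaths of an auxiliary system $(\widehat X(t))$ whose nonzero measures are $\widehat\Lambda_u=\Lambda$, $\widehat R_{uv}=r_{uv}\delta_0$, and $\widehat D_u=R_{uv}^+$, one gets $X_v(t)\ge\widehat X_v(t)$ a.s.\ for all $t$. The reproduction analogue of \propref{deathdoesntmatter} then reduces the problem to showing that $r_{uv}\delta_0$ is $\Lambda$-strong as a reproduction measure; its proof is the same red/blue bookkeeping argument, the only new observation being that a never-killed (``blue'') particle that reproduces remains blue and contributes a fresh blue particle at $v$, so the never-killed labels still carry a copy of the death-free reproduction system. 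Finally, $r_{uv}\delta_0$ being $\Lambda$-strong as a reproduction measure is the reproduction counterpart of the corresponding statement for $m_{uv}\delta_0$, which is a corollary of Lemma 5.3 of \cite{Angel2012}: a $\Lambda$-coalescent from infinity comes down no faster than the Kingman coalescent, so $\int_0^\varepsilon X_u(s)\,ds=\infty$ for every $\varepsilon>0$, forcing infinitely many independent (rate $r_{uv}$ per particle) reproduction events before time $\varepsilon$.

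Next I would treat $R_{uv}(\{0\})=0$. Since $(X_u(t))$ is a $\Lambda$-coalescent, \lemref{kingmancontrol} directly supplies a random time $t^*>0$ and a $4\Lambda([0,1])\delta_0$-coalescent $(X'(t))$ with $X_u(s)>X'(s)$ for all $s\le t^*$. Construct $(\widehat X(t))$ on $\{u,v\}$ with $\widehat\Lambda_u=4\Lambda([0,1])\delta_0$, $\widehat R_{uv}=R_{uv}$, using the \emph{same} reproduction Poisson point process as $(X(t))$, so that $X_u(s)\ge\widehat X_u(s)$ for $s\le t^*$ and therefore $X_v(t)\ge\widehat X_v(t)$ on that time interval. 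It then suffices to know that a reproduction measure with $E[-\log Y]=\infty$ is $C\delta_0$-strong for every $C>0$; this is the reproduction version of the sped-up-Kingman criterion used in \propref{arblambdastrong}, and it follows from the Borel--Cantelli/Kochen--Stone argument in the proof of \thmref{betasuffstrong} (equivalently Theorem 2.7 of \cite{seedbank2} and Theorem 4.1 of \cite{seedbank1}, carrying the constant through), whose only inputs are the event rate $\gamma(n)$ and the law of the counting process at $u$ — both unchanged here. Hence $\widehat X_v(t)=\infty$ for $t<t^*$, so $X_v(t)=\infty$ for $t<t^*$, and since $t\mapsto X_v(t)$ is nondecreasing, $X_v(t)=\infty$ for all $t>0$, i.e.\ $R_{uv}$ is $\Lambda$-strong.

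The main obstacle — such as it is — is verifying that the ``$C\delta_0$-strong'' criterion and the reproduction analogue of \propref{deathdoesntmatter} genuinely go through without using the fact that a migrating particle \emph{leaves} site $u$. Once one records that reproduction and migration out of $u$ have identical event rates and that the $u$-counting process is, if anything, simpler in the reproduction case (no death), the remaining work is mechanical bookkeeping.
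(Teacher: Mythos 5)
Your proposal is correct and follows essentially the same route as the paper, whose entire proof is the remark that the argument of \propref{arblambdastrong} goes through verbatim when reproduction is treated as migration; you have simply verified that claim in detail. Your observation that the case analysis actually simplifies — since reproduction does not remove particles from $u$, the counting process $(X_u(t))_{t\ge 0}$ is a genuine $\Lambda$-coalescent and \lemref{removedeath} is not needed — is accurate and is a worthwhile point the paper leaves implicit.
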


\noindent With this setup we are now able to work toward the proof of \thmref{comingdown}. We shall note that combining \thmref{comingdown} with \propref{arblambdastrong} or \thmref{betasuffstrong} will give sufficient conditions for an arbitrary coordinated particle system to come down from infinity that are easier to check. The high level idea behind the proof of \thmref{comingdown} is condition (b) allows for an infinite number of particles to move from site $v_1$ to site $v_k$ during any time interval $[0,t]$ with $t > 0$. Then condition (a) of \thmref{comingdown} makes sure that an infinite number of the particles that moved to site $k$ remain there which keeps the process infinite. We shall begin the proof of \thmref{comingdown} with a lemma to show that an infinite number of particles enter site $v_k$. 

\begin{lemma}\label{L:inftymovements}
    Under the assumption (b) of \thmref{comingdown}, we have
    \begin{align*}
        \lim_{n \to \infty}P\left(\sup_{s \in [0,t]} X_{v_k}^n(s) \geq m\right) = 1,
    \end{align*}
    
    \noindent for all $m \in \N$ where $(X^n(t))_{t\geq 0}$ is the coordinated particle system started with $n$ particles at site $v_1$.
\end{lemma}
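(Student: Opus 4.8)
The plan is to first pass to the limit $n\to\infty$. By \thmref{PPPconstruction}(b) the processes $(X^n)_{n\in\N}$ increase pathwise to the process $X^\infty$ started from $\infty\cdot e_{v_1}$, so $\sup_{s\le t}X^n_{v_k}(s)$ increases to $\sup_{s\le t}X^\infty_{v_k}(s)$; since every path takes values in $\N_0\cup\{\infty\}$, it follows that $\{\sup_{s\le t}X^n_{v_k}(s)\ge m\}$ increases to $\{\sup_{s\le t}X^\infty_{v_k}(s)\ge m\}$, so the lemma is equivalent to the single almost-sure statement $\sup_{s\in[0,t]}X^\infty_{v_k}(s)=\infty$ (which for $k=1$ is immediate since $X^\infty_{v_1}(0)=\infty$).

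Next I would isolate an elementary observation, the \emph{accumulation dichotomy}: if infinitely many particles arrive at a site $w$ (by migration or reproduction into $w$) during a time interval $(0,\varepsilon)$, then almost surely $\sup_{s\in(0,\varepsilon)}X^\infty_w(s)=\infty$. Indeed, were the count at $w$ bounded by some $B<\infty$ on $(0,\varepsilon)$, then on $[0,\varepsilon]$ the total rate of coalescence, death, and outgoing-migration events at $w$ would never exceed $\lambda_B^{(w)}+d_B^{(w)}+\sum_{w'}m_B^{(w,w')}<\infty$ (notation of \defref{rates}), so only finitely many such events could occur on $[0,\varepsilon]$ and only finitely many particles could ever leave $w$, contradicting mass balance at $w$ (infinitely many arrive, net change at most $B$).

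The heart of the argument is then the following claim, proved by induction on $i$: for every $i=1,\dots,k-1$ and every $\varepsilon\in(0,t]$, almost surely infinitely many particles travel from $v_i$ to $v_{i+1}$ during $(0,\varepsilon)$, along whichever of $M_{v_iv_{i+1}},R_{v_iv_{i+1}}$ is $\Lambda_{v_i}$-strong. For $i=1$ this is \propref{deathdoesntmatter}: site $v_1$ starts with infinitely many particles, and collapsing into the death measure every other mechanism that can remove a particle from $v_1$, its $\Lambda_{v_1}$-strong out-map to $v_2$ ships infinitely many particles out before time $\varepsilon$; a coupling argument as in the proof of \thmref{PPPconstruction}(b) shows $X^\infty$ ships at least as many, since the particles on $V\setminus\{v_1,v_2\}$ can only ever feed more particles onto $v_1$. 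For the step, assume the claim for $i-1$, so infinitely many particles arrive at $v_i$ during $(0,\varepsilon/2)$; by the accumulation dichotomy, almost surely $\sigma_M:=\inf\{s:X^\infty_{v_i}(s)\ge M\}<\varepsilon/2$ for every $M$. Fix $N\in\N$. Applying the strong Markov property at $\sigma_M$ and dominating the subsequent evolution below by the two-site system with $M$ particles at $v_i$ (coalescence $\Lambda_{v_i}$, death absorbing all other removal mechanisms at $v_i$, and the $\Lambda_{v_i}$-strong out-map to $v_{i+1}$), \propref{deathdoesntmatter} together with monotonicity in the initial condition yields that the probability that at least $N$ particles travel from $v_i$ to $v_{i+1}$ during $[\sigma_M,\sigma_M+\varepsilon/2]\subseteq(0,\varepsilon)$ is at least $p_M$, with $p_M\to1$ as $M\to\infty$; letting $M\to\infty$ gives the claim for $i$. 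Taking $i=k-1$ and $\varepsilon=t$, infinitely many particles arrive at $v_k$ during $(0,t)$, and one more application of the accumulation dichotomy gives $\sup_{s\in(0,t)}X^\infty_{v_k}(s)=\infty$ almost surely.

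The step I expect to be the main obstacle is the bootstrap in the inductive step: knowing that infinitely many particles have \emph{cumulatively} arrived at $v_i$ over a positive time interval is strictly weaker than knowing that $v_i$ holds arbitrarily many particles at a single instant, yet the latter is exactly what is needed before the $\Lambda_{v_i}$-strength of the edge leaving $v_i$ can be invoked. The accumulation dichotomy is the bridge, and care is needed to apply it only on bounded time intervals (so that the bounded-rate estimate really does yield finitely many events almost surely) and to set up the stochastic domination under the strong Markov property correctly — in particular keeping track of the fact that reproduction out of $v_i$ does not decrease the count at $v_i$, whereas migration, coalescence, and death do.
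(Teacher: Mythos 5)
Your proposal is correct, and its skeleton matches the paper's: induction along the path $v_1,\dots,v_k$, reduction of the site being drained to a two\nobreakdash-site system in which every other removal mechanism at that site is absorbed into the death measure so that \propref{deathdoesntmatter} and $\Lambda_{v_i}$-strength apply, and the strong Markov property to restart once the next site is well populated. The genuine difference is in the step you correctly flag as the crux: converting ``infinitely many particles arrive cumulatively at a site'' into ``the instantaneous count at that site exceeds any level $m$.'' The paper handles this quantitatively and stays with finite $n$ throughout: given $\eta>0$ it picks a deterministic window $t_0$ of length $\eta$ times the reciprocal of the total coalescence, death, and outgoing-migration rates at occupancy $m$, waits for $m$ arrivals within that window, and bounds by $3\eta$ the probability that any of those particles are removed before $t_0$, giving $P(X^n_{v_2}(t_0)\ge m)\ge 1-4\eta$ directly. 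You instead pass to the $n=\infty$ process at the outset (legitimate by \thmref{PPPconstruction}(b) together with the integer-valuedness of the counts) and prove a qualitative accumulation dichotomy: bounded occupancy forces a bounded removal rate, hence almost surely finitely many removal events and finitely many departures on a bounded time interval, contradicting infinitely many arrivals. Your route buys the cleaner almost-sure statement $\sup_{s\in[0,t]}X^\infty_{v_k}(s)=\infty$ with no rate bookkeeping or $\eta$-chasing, at the cost of justifying the interchange of limits in the reduction and the Poisson domination behind ``bounded rate implies finitely many events''; the paper's route produces an explicit deterministic time at which the count is large, which is what its inductive step feeds into the strong Markov property. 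Your attention to the asymmetry between reproduction and migration out of $v_i$ (only the latter lowers $X_{v_i}$, so only the latter belongs in the dichotomy's removal rate) and to restricting the dichotomy to bounded intervals addresses the two places where the argument could otherwise break.
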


\begin{proof}
    The proof of this fact is essentially the same as the proof of Lemma 5.3 in \cite{Angel2012}, but we shall repeat it here for completeness. We shall first prove this fact when $v_k$ is a neighbor of the site $v_1$, i.e. $k = 2$. In order to do this case we will first show an infinite number of particles jump from site $v_1$ to site $v_2$ before time $t$ for all $t > 0$.
    \medskip

    \noindent To start we shall first note that condition (b) of \thmref{comingdown} means that either the migration measure $M_{v_1v_2}(dz)$ or the reproduction measure $R_{v_1v_2}(dz)$ is $\Lambda_{v_1}$-strong. Without loss of generality we shall assume that $M_{v_1v_2}(dz)$ is $\Lambda_{v_1}$-strong. Now let $\mathcal{M}^n_t$ be a random variable that counts the number of particles that enter $v_2$ from $v_1$ before time $t$ through migration or reproduction. Now define the process $(\widehat{X}^n(t))_{t \geq 0}$ to be a coordinated particle system on sites $\{v_1, v_2\}$ with $\widehat{X}^n_{v_1}(0) = X^n_{v_1}(0) = n$ which has coalescence $\widehat{\Lambda}_{v_1}(dz) = \Lambda_{v_1}(dz)$, migration $\widehat{M}_{v_{1}v_2}(dz) = M_{v_{1}v_2}(dz)$, and death $\widehat{D}_{v_1}(dz)$ given by
    \begin{align*}
        \widehat{D}_{v_1}(dz) = D_{v_1}(dz) + \sum_{\mathclap{v' \sim v_1, v'\neq v_2}} M_{v_1v'}(dz).
    \end{align*}

    \noindent We enforce all other measures controlling $(\widehat{X}^n(t))_{t \geq 0}$ to be $0$. We can also assume that the two coordinated particle systems use the exact same Poisson point processes. One should understand $(\widehat{X}^n(t))_{t \geq 0}$ to be a restriction of $(X^n(t))_{t \geq 0}$ to sites $\{v_1, v_2\}$. Now if we let $\widehat{\mathcal{M}}^n_t$ be the quantity corresponding to $\mathcal{M}^n_t$ for $(\widehat{X}^n(t))_{t \geq 0}$ then we clearly have by construction that $\mathcal{M}^n_t \geq \widehat{\mathcal{M}}^n_t$. Now since $\widehat{M}_{v_1v_2}(dz) = M_{v_1v_2}(dz)$ is $\Lambda_{v_1}$-strong we have that $\widehat{\mathcal{M}}^n_t \to \infty$ as $n \to \infty$ for all $t > 0$ by \propref{deathdoesntmatter}. Therefore, $\mathcal{M}^n_t \to \infty$ as $n \to \infty$ a.s. 
    \medskip
    
    \noindent Now to prove the claim for the case $k = 2$ we let $\eta > 0$ and use it as well as the rates defined in \defref{rates} to define the quantity $t_0$ by
    \begin{align*}
        t_0 = \eta \min\left\{t, \left({\lambda_m^{(v_k)}}\right)^{-1}, \left({d_m^{(v_k)}}\right)^{-1}, \left(\sum_{u \sim v_k} m_m^{(v_k,u)}\right)^{-1}\right\}.
    \end{align*}
    
    \noindent Now since $\mathcal{M}^n_t \to \infty$ as $n \to \infty$ for all $t > 0$, there exists $n_0$ large such that if $v_1$ has $n_0$ particles then $v_2$ receives at least $m$ particles before time $t_0$ with probability at least $1-\eta$. Now since the total rate that $\ell$ particles migrate, coalesce, or die are $\lambda_\ell^{(v_2)}, d_\ell^{(v_2)},$ and $\sum_{u \sim v_2} m_\ell^{(v_2,u)}$ respectively (all increasing in $\ell$), the probability that any of these particles move out of $v_2$ before time $t_0$ due to these forces is at most $3\eta$. Thus we have that for all $n \geq n_0$
    \begin{align*}
        P\left( X_{v_2}^n(t_0) \geq m\right) \geq 1 - 4\eta,
    \end{align*}

    \noindent which proves the result for $k = 2$ since $\eta$ can be made arbitrarily small.
    \medskip

    \noindent Now to prove the result for $k \geq 3$ we can apply the induction argument of Lemma 5.3 of \cite{Angel2012}. If the distance $d(v_1, v_k) \neq 1$, then taking $v_{k-1} \sim v_k$ in the path defined by condition (b) of \thmref{comingdown}, and we see $d(v_1, v_{k-1}) < d(v_1, v_k)$. Now by the inductive hypothesis, if we fix $m' \in \N$ and $\eta >0$, for sufficiently large $n$ we have
    \begin{align*}
        P\left(\sup_{s \in [0,t/2]} X_{v_{k-1}}^n(s) \geq m'\right) > 1 - \eta.
    \end{align*}

    \noindent Now by the strong Markov property we can repeat the argument for the base case for sufficiently large $m'$ playing the role of $n_0$ in the base case to get that with probability $1- 5\eta$ vertex $v_k$ has at least $m$ particles at some time $s < t$. Thus we have proved the result via induction.
\end{proof}

\noindent We can see that \lemref{inftymovements} guarantees us that an infinite number of particles will enter site $v_k$ before any positive time. Now we wish to employ condition (a) of \thmref{comingdown} to claim that the infinite number of particles that moved to site $v_k$ will be unable to be reduced to a finite number by the coalesence and death at site $v_k$. To make this rigorous we shall prove the following coupling.

\begin{lemma}\label{L:killingcouple}
    Suppose a $\Lambda$-coalescent with death according to $D(dz)$ has an infinite number of particles enter before time $t_0$, i.e. particles enter at random times $(s_n)_{n \in \N}$ where $s_n \leq t_0$ for all $n \in \N$. Denote the block counting process of this coalescent by $(\mathring{N}(t))_{t \geq 0}$. Then there is a coupling such that $N(t_0) \leq \mathring{N}(t_0)$ where $(N(t))_{t\geq 0}$ is the block counting process of the $\Lambda$-coalescent with killing according to $D(dz)$ starting with infinitely many particles.
\end{lemma}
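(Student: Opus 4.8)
The idea is to make rigorous the intuition that starting with all (infinitely many) particles already present at time $0$ is the worst case: every particle then has the full interval $[0,t_0]$ in which to coalesce or be killed, whereas in $\mathring N$ the late-arriving particles have had less time. Since asking for a coupling with $N(t_0)\le\mathring N(t_0)$ is the same as asking that $N(t_0)$ be stochastically dominated by $\mathring N(t_0)$, I will prove this stochastic domination. The arrival times $(s_n)_{n\in\N}$ are produced by Poisson point processes independent of those driving coalescence and killing at the site, so after conditioning on $(s_n)$ we may treat them as deterministic with $\sup_n s_n\le t_0$.

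First I would reduce to a finite comparison. Let $N^{(n)}$ be the $\Lambda$-coalescent with killing according to $D$ started from $n$ particles; by the monotone coupling behind the construction of the coalescent from infinity, $N^{(n)}(t_0)\uparrow N(t_0)$. Enumerating the arrival times in increasing order, let $\mathring N^{(n)}$ be the coalescent with death restricted to the $n$ lineages with the smallest arrival times. Since a killing event removes each block independently, the restriction of a coalescent with death to a subset of lineages is again a coalescent with death, so by consistency $\mathring N^{(n)}(t_0)\uparrow\mathring N(t_0)$. It therefore suffices to show, for each fixed $n$ and all $s_1,\dots,s_n\in[0,t_0]$, that the coalescent with death started from $n$ particles at time $0$ is stochastically dominated at time $t_0$ by the coalescent with death on $n$ lineages in which lineage $i$ enters at time $s_i$; letting $n\to\infty$ then gives $P(N(t_0)\ge m)\le P(\mathring N(t_0)\ge m)$ for every $m$.

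For the finite comparison I would interpolate in the arrival-time vector, moving it from $(0,\dots,0)$ to $(s_1,\dots,s_n)$ one coordinate at a time, so it is enough to prove the one-step statement: if two arrival-time vectors agree except that the $k$-th coordinate is raised from $s_k$ to $s_k'\in[s_k,t_0]$, then the corresponding block counts at $t_0$ can be coupled with $\le$ holding pathwise. To prove it, build both processes as consistent extensions of one common coalescent with death $Q$ on the remaining $n-1$ lineages (with their shared arrival times), using the same auxiliary randomness for the coins seen by lineage $k$, so that in both pictures lineage $k$ simply enters as a singleton (at time $s_k$, resp.\ $s_k'$) and thereafter, at each atom, independently joins the ongoing merge, resp.\ is killed, at the atom's parameter. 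By consistency the block count equals $Q(\cdot)$ plus the indicator that lineage $k$'s block is still a living singleton, and with $Q$ and lineage $k$'s coins coupled this indicator is $1$ at $t_0$ exactly when lineage $k$ is selected at no killing atom in its lifetime window and is never selected simultaneously with a $Q$-block at a coalescence atom in that window. Since $[s_k',t_0]\subseteq[s_k,t_0]$, the indicator for the raised vector dominates that for the original pathwise, proving the one-step statement; iterating over $k=1,\dots,n$ and using transitivity of stochastic order finishes the finite comparison.

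The step I expect to require the most care is the coupling in the one-step comparison — specifically, exhibiting the consistent-extension construction so that (i) deleting lineage $k$ returns exactly the same coalescent with death $Q$ in both pictures and (ii) the Poisson atoms and coins governing lineage $k$ are identical in the two pictures. This is the familiar consistency/extension machinery for $\Lambda$-coalescents, but it must be carried through with a killing measure present. The remaining ingredients — the monotone limits in $n$, the conditioning that makes $(s_n)$ deterministic, and the bookkeeping for the single-lineage indicator — are routine.
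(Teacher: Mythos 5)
Your argument is correct, but it reaches the conclusion by a genuinely different route than the paper. The paper builds a \emph{single} augmented process $(\Pi(t))_{t\ge 0}$ in which every particle carries a colour (red/blue) and a frozen/unfrozen flag: all particles start blue and frozen, unfreeze at their arrival times, only unfrozen particles actually coalesce or die, and a frozen particle that is selected by an atom is merely painted red. The total particle count of $\Pi$ is then $\mathring N$, the blue count is $N$, and $N(t_0)\le\mathring N(t_0)$ is immediate because blue particles are a subset of all particles — one construction, a pathwise inequality valid for all $t$, and no limiting or interpolation argument. You instead prove stochastic domination at the fixed time $t_0$ (which, as you note, is all the lemma asserts) by reducing to finitely many lineages via consistency, interpolating one arrival coordinate at a time, and showing that delaying a single lineage's entry shrinks its exposure window and hence can only increase the indicator that it is still a living singleton at $t_0$. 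This is sound: the identity ``block count of the $n$-lineage process equals the block count of its restriction $Q$ plus the singleton indicator for lineage $k$'' does hold for coalescents with killing, since the death coins are exchangeable and events touching only lineage $k$ are invisible to the restriction. The crux you flag — constructing both pictures as consistent one-lineage extensions of a common $Q$ with shared auxiliary randomness — is real and is essentially the paper's freezing device applied to one lineage at a time; to make ``the same auxiliary randomness'' well defined you should index lineage $k$'s pairwise Kingman clocks and atomic death clock by objects intrinsic to $Q$ (e.g.\ by the least element of each $Q$-block), so that the extension does not depend on which entry time is in force. The trade-off: the paper's coupling is more global and avoids the extension machinery entirely, while yours is more modular and isolates the monotonicity in the entry time of a single lineage, at the cost of the interpolation and a weaker (single-time, distributional) conclusion — which is nevertheless exactly what the lemma and its application in the proof of Theorem~\ref{T:comingdown} require.
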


\begin{proof}
    To prove this coupling we shall construct a process $(\Pi(t))_{t \geq 0}$ using Poisson point processes from which we can recover $(\mathring{N}(t))_{t \geq 0}$ and $(N(t))_{t \geq 0}$. The process $(\Pi(t))_{t \geq 0}$ is a colored coalescent with frozen particles and death. We shall first construct $(\Pi(t))_{t \geq 0}$ in the case when there are $n$ particles in the system and then we shall take $n \to \infty$ for the full result. 
    \medskip

    \noindent We shall begin by defining the set of possible labels to be the set $\{1, \ldots, n\} \times \{r,b\} \times\{f, u\}$ where each particle has an additional flag for its color, $r$ for red and $b$ for blue, and a flag that indicates whether the particle is frozen or unfrozen, $f$ for frozen and $u$ for unfrozen. Now we can define the initial state of the process by $\Pi(0) = \{1, \ldots, n\} \times \{b\} \times \{f\}$ where all the particles are colored blue. Now we shall let $(s_i)_{i = 1}^n$ be the entry times of the particles as described in the lemma statement. For the process $(\Pi(t))_{t \geq 0}$ these will represent the times at which a particle will become unfrozen, i.e. at time $s_i$ the particle labeled $i$ will have its flag flipped from $f$ to $u$. Now we shall define the dynamics of the system such that the number of blue particles at time $t_0$ is $N(t_0)$ and the total number of particles at time $t_0$ is $\mathring{N}(t_0)$.
    \medskip

    \noindent To define the dynamics of $(\Pi(t))_{t \geq 0}$ we shall use Poisson point processes. In order to do this we shall first split the measures $\Lambda(dz)$ and $D(dz)$ by considering $\Lambda(dz) = c \delta_0 + \Lambda^+(dz)$ and $D(dz) = d \delta_0 + D^+(dz)$ such that $\Lambda^+(dz)$ and $D^+(dz)$ do not have atoms at $0$. We shall also define $P_z(d\xi)$ to be the law of $\xi = (\xi_i)_{i = 1}^n$ which is a random vector of independent $\operatorname{Bernoulli}(z)$ random variables. This lets us define the following Poisson point processes:

    \begin{itemize}
        \item Let $\pi(\cdot)$ be a Poisson point process on $\R_+ \times [0,1] \times \{0,1\}^n$ with intensity measure given by $dt \otimes z^{-2}\Lambda^+(dz) \otimes P_z(d\xi)$.

        \item Let $\overline{\pi}(\cdot)$ be a Poisson point process on $\R_+ \times [0,1] \times \{0,1\}^n$ with intensity measure given by $dt \otimes z^{-1}D^+(dz) \otimes P_z(d\xi)$.

        \item Let $\pi_{i,j}(\cdot)$ for $i < j \in \{1, \ldots, n\}$ be a Poisson point process on $\R^+$ with intensity $c\, dt$.
        
        \item Let $\overline{\pi}_{i}(\cdot)$ for $i \in \{1, \ldots, n\}$ be a Poisson point process on $\R^+$ with intensity $d\, dt$.
    \end{itemize}
    
    \noindent Through the use of the Poisson point processes above, the dynamics of the process $(\Pi(t))_{t \geq 0}$ will be defined as follows. When $(t,z, \xi)$ is an atom in $\pi(\cdot)$, we coalesce all the particles in $(\Pi(t))_{t \geq 0}$ with $\xi_i = 1$ that are unfrozen at time $t$ which means $s_i < t$. The resulting particle is labeled with the smallest label $i^* = \min\{i \colon \xi_i = 1\}$. For its color we shall color it blue if any blue particles were coalesced and red if only red particles were coalesced. In addition, for all $i > i^*$ such that $\xi_i = 1$, but were frozen at the time $t$, i.e. $s_i > t$ we shall color these particles red. 
    \medskip
    
    \noindent Similarly, upon an arrival of the atom $(\overline{t}, \overline{z}, \xi)$ in $\overline{\pi}(\cdot)$ we kill all the particles that are unfrozen in $(\Pi(t))_{t \geq 0}$ such that $\xi_i = 1$ and color the frozen particles with $\xi_i = 1$ red. 
    \medskip
    
    \noindent For the independent coalesence and death events the dynamics are similar. When $t$ is an arrival of an atom in $\pi_{i,j}(\cdot)$ we shall coalesce particles labeled $i$ and $j$ in $(\mathring{\Pi}(t))_{t \geq 0}$ if they are both unfrozen into a particle labeled $i$ which is colored red if both $i$ and $j$ are colored red and blue otherwise. If either $i$ or $j$ is frozen we shall perform no coalescence and color particle $j$ red instead. Similarly, when $t$ is an atom in $\overline{\pi}_{i}(\cdot)$ we shall kill the particle labeled $i$ in $(\Pi(t))_{t \geq 0}$ if it is unfrozen and color it red otherwise. 
    \medskip

    \noindent Now we are able to recover that $N(t_0)$, the block counting process for a coalescent with death, is the number of blue particles in $\Pi(t_0)$ and $\mathring{N}(t_0)$ is the total number of particles in $\Pi(t_0)$. It is immediate from this construction that $N(t_0) \leq \mathring{N}(t_0)$. Now taking the starting number of particles $n \to \infty$ gives the desired result.
\end{proof}

\begin{proof}[Proof of \thmref{comingdown}]
    We begin by noting that since $X_{v_1}(0) = \infty$ and condition (b) of the theorem holds we can apply \lemref{inftymovements} to see that for all $m \in \N$,
    \begin{align*}
        P\left(\sup_{s \in [0,t]} X_{v_k}(s) \geq m\right) = \lim_{n \to \infty}P\left(\sup_{s \in [0,t]} X_{v_k}^n(s) \geq m\right) = 1.
    \end{align*}

    \noindent Thus $P(\sup_{s \in [0,t]} X_{v_k}(s) = \infty) = 1$ for every $t > 0$. This can only happen if either there is an infinite number of particles entering site $v_k$ before time $t$ or if site $v_k$ originally started with infinitely many particles. In either case we can use \lemref{killingcouple} to see that $X_k(t) \geq N(t)$ where $(N(t))_{t \geq 0}$ is the block counting process of a single site coalescent with death whose dynamics are coalescence with respect to $\Lambda_{v_k}$ and death with respect to $D_{v_k} + \sum_{u \sim v_k} M_{v_ku}$. Now we may either apply Lemma 4.1 in \cite{Johnston2023}, which states that a $\Lambda$-coalescent with death comes down from infinity if and only if the $\Lambda$-coalescent does, or use a strategy similar to the proof of \propref{deathdoesntmatter} to conclude that $N(t) = \infty$ since the $\Lambda_{v_k}$-coalescent does not come down from infinity by condition (a) of \thmref{comingdown}. Thus the coordinated particle system does not come down from infinity. 
\end{proof}

\noindent Now we shall show a partial converse to \thmref{comingdown} which will be applicable when there is no reproduction. In order to do this we shall need the following lemma.

\begin{proposition}\label{P:comedown}
    Let $(X(t))_{t \geq 0}$ be a coordinated particle system on a finite set of sites $V$ with no death and no reproduction which starts with an infinite number of particles at each site. If for each $v \in V$ we have that the $\Lambda_v$-coalescent comes down from infinity, then $(X(t))_{t \geq 0}$ comes down from infinity.
\end{proposition}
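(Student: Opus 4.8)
The plan is to reduce the proposition to a scalar statement about the total number of particles and then invoke (the method behind) Schweinsberg's coming‑down criterion.

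By \thmref{PPPconstruction}(b) it is enough to treat the process started from $X_v(0)=\infty$ for all $v$; I would realize it as the increasing limit $X(t)=\lim_n X^n(t)$ of the processes $X^n$ started with $n$ particles at each of the $m:=|V|$ sites, so that $N^n(t):=|X^n(t)|\uparrow N(t):=|X(t)|$ pointwise in $t$. Because there is neither death nor reproduction, migration leaves the total count unchanged and only coalescence lowers it, so each $N^n$ is non‑increasing and c\`adl\`ag; when $X^n(t-)=\pi$ with $|\pi|=b$ the rate at which $N^n$ decreases equals $\Gamma(\pi):=\sum_{v}\gamma^{(v)}_{\pi(v)}$ with $\gamma^{(v)}_n:=\sum_{k=2}^n(k-1)\binom nk\lambda^{(v)}_{n,k}$. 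A binomial identity gives the closed form $\gamma^{(v)}_n=\int_0^1\frac{nz-1+(1-z)^n}{z^2}\,\Lambda_v(dz)$; since the $n$‑increments of $nz-1+(1-z)^n$ equal $z\bigl(1-(1-z)^n\bigr)\geq0$, each sequence $n\mapsto\gamma^{(v)}_n$ is non‑decreasing, with $\gamma^{(v)}_2=\Lambda_v([0,1])>0$ (note $\Lambda_v\neq0$, for otherwise the $\Lambda_v$‑coalescent would not come down from infinity). By pigeonhole some site carries at least $\lceil b/m\rceil$ particles, hence
\[
\Gamma(\pi)\ \geq\ g(b)\ :=\ \min_{v\in V}\gamma^{(v)}_{\lceil b/m\rceil},
\]
and $g$ is non‑decreasing, strictly positive for $b>m$, and $\sum_{b>m} g(b)^{-1}\leq m\sum_{v}\sum_n\bigl(\gamma^{(v)}_n\bigr)^{-1}<\infty$, the inner sums being finite by Schweinsberg's criterion \cite{lamdacomedown} since each $\Lambda_v$‑coalescent comes down from infinity.

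The remaining step is to show that a Markov process whose (non‑increasing, integer‑valued) ``block count'' decreases at rate $\geq g(\cdot)$, with $g$ non‑decreasing and $\sum_b g(b)^{-1}<\infty$, comes down from infinity — this is exactly the mechanism in the sufficiency half of \cite{lamdacomedown}. Concretely I would, for each level $j>m$, introduce the bounded potential $\phi_j(b):=\sum_{i=j+1}^{b\vee j}g(i)^{-1}\leq\varepsilon_j:=\sum_{i>j}g(i)^{-1}$, show that $\phi_j\bigl(N^n(t\wedge\sigma^n_j)\bigr)+c\,(t\wedge\sigma^n_j)$ is a supermartingale for a suitable constant $c>0$, where $\sigma^n_j:=\inf\{t:N^n(t)\leq j\}$, and conclude $E[\sigma^n_j]\leq C\varepsilon_j$ with $C$ independent of $n$ and $j$. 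Markov's inequality then yields $P(N^n(t)>j)=P(\sigma^n_j>t)\leq C\varepsilon_j/t$ for all $n$, so $P(N(t)>j)=\sup_n P(N^n(t)>j)\leq C\varepsilon_j/t$ (using $N^n(t)\uparrow N(t)$), and letting $j\to\infty$ gives $P(N(t)=\infty)=0$ for each fixed $t>0$. Since $N$ is non‑increasing, $\{N(t)<\infty\ \forall\,t>0\}=\bigcap_{k\geq1}\{N(1/k)<\infty\}$ has full probability, which is the claim of \propref{comedown}.

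The hard part is the supermartingale estimate: a single coalescence at a crowded site can delete many blocks and overshoot the threshold $j$, so the drift of $\phi_j\circ|\cdot|$ is not automatically bounded away from zero, and the naive choice of $\phi_j$ only yields $E[\sigma^n_j]\leq C_j\varepsilon_j$ with $C_j$ possibly diverging. This is the familiar large‑merger obstruction in the coming‑down problem, handled in \cite{lamdacomedown} (and, via couplings, in \cite{Angel2012}) by retaining only the mergers that remove at most a fixed fraction of the particles at their site — which still carry a positive proportion of the decrease rate $g(b)$ — and I expect that argument to carry over to the present spatial, site‑dependent setting essentially verbatim.
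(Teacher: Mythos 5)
Your route is genuinely different from the paper's. The paper (following \cite{Johnston2023}) works entirely with first moments: \lemref{expchange} identifies the drift of the total count as $-\Psi(X(t))$, convexity of $\Psi$ and $\Omega$ (\lemref{convex}) plus Jensen's inequality yield the integral inequality $f_n(t)\leq nN-\int_0^t\Omega(f_n(s))\,ds$ for $f_n(t)=E[|X_n(t)|]$, and an ODE comparison gives $f_n(t)\leq w_\infty(t)<\infty$ uniformly in $n$, whence $|X(t)|<\infty$ a.s.\ by monotone convergence. Your pigeonhole bound $\Gamma(\pi)\geq g(|\pi|)$ with $\sum_b g(b)^{-1}<\infty$ is the exact counterpart of \lemref{upperboundminrate} (note $\psi_v=\gamma^{(v)}$), so the two arguments diverge only in how they convert ``the total count decreases at mean rate at least $g(|X|)$'' into coming down from infinity: you propose hitting times and a Schweinsberg-style potential, the paper uses expectations. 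The paper's version is shorter precisely because a first-moment argument is insensitive to the jump-size distribution, which is where your difficulties live.

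The gap you flag at the end is real, but your diagnosis and proposed fix miss the actual failure mode, and the appeal to ``the mechanism of \cite{lamdacomedown}'' cannot be left as a black box. First, the obstruction is not that large mergers carry too little of the decrease rate: since $g$ is non-decreasing, a jump from $b$ to $b-h$ decreases $\phi_j$ by $\sum_{i=\max(b-h,j)+1}^{b}g(i)^{-1}\geq\min(h,\,b-j)\,g(b)^{-1}$, so every level crossed above $j$ is credited in full no matter how large $h$ is. The problem is purely the truncation at $j$: for $b$ just above $j$ the credit is capped at $(b-j)g(b)^{-1}$, the drift degenerates to $-(b-j)/(b-1)$, and optional stopping only gives $E[\sigma^n_j]\lesssim j\,\varepsilon_j$, which need not vanish. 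Restricting to mergers that remove at most a fixed fraction of a site's particles does not repair this (a half-merger from $b=j+1$ still overshoots $j$ badly), and in Schweinsberg's single-site proof the issue simply never arises because his potential bottoms out at the absorbing level $1$; his conclusion moreover passes through Pitman's dichotomy (either CDI or infinite for all time), which is not established for coordinated particle systems, so you genuinely need a quantitative bound $P(N^n(t)>j)\leq C\varepsilon_j/t$ with $\varepsilon_j\to0$ uniform in $n$. The good news is that a mismatch of levels supplies it: keep the potential $\phi_j$ truncated at $j$ but stop at $\sigma^n_{2j}:=\inf\{t:N^n(t)\leq 2j\}$. For $b>2j$ one has $\min(h,b-j)\geq h/2$, hence drift at most $-\tfrac12$, hence $E[\sigma^n_{2j}]\leq 2\varepsilon_j$ uniformly in $n$, and then $P(N(t)>2j)\leq 2\varepsilon_j/t\to0$ as $j\to\infty$. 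With that substitution your argument closes; as written, the key estimate is asserted rather than proved, and the suggested route to it would not work.
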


\noindent The strategy to prove this lemma is the same as the strategy to prove the reverse direction of Theorem 1.8 in \cite{Johnston2023} which proves necessary conditions for multitype $\Lambda$-coalescents to come down from infinity. Due to this we shall use similar notation throughout the argument. Before we begin let us recall some facts about the $\Lambda$-coalescent. The first fact can be found in \cite{stochasticflows3} where it was shown that the summability condition found in \cite{lamdacomedown} for a $\Lambda$-coalescent to come down from infinity is equivalent to the fact that for every $s \geq 2$ we have
\begin{align*}
    \int_s^\infty \frac{1}{\psi(q)}\;dq < \infty,
\end{align*}

\noindent where $\psi(q)$ is the \emph{non-asymptotic processing speed} defined by
\begin{align*}
    \psi(q) \coloneqq \Lambda(\{0\})\frac{q(q-1)}{2}+\int_{(0,1]} \Big(qz - 1 + (1-z)^q\Big)\;\frac{\Lambda(dz)}{z^2}.
\end{align*}

\noindent The second fact is that if $\gamma_b$ is the rate at which blocks decrease in the $\Lambda$-coalescent when there are $b$ blocks present we have by an application of the binomial theorem that
\begin{align*}
    \gamma_b = \sum_{k = 2}^b (k-1)\binom{b}{k} \lambda_{b,k} = \Lambda(\{0\})\frac{b(b-1)}{2} + \sum_{k = 2}^b (k-1)\binom{b}{k} \int_{(0,1]} z^k(1-z)^{b-k}\frac{\Lambda(dz)}{z^2} = \psi(b).
\end{align*}

\noindent With these preliminaries we shall begin with the analogue of Lemma 4.2 in \cite{Johnston2023}.

\begin{lemma}\label{L:expchange}
    Suppose a coordinated particle system $(X(t))_{t \geq 0}$ on a finite set of sites $V$ has no death and no reproduction. Assuming that $|X(t)| < \infty$, the expected rate of change in total number of particles at time $t$ is given by
    \begin{align*}
         \lim_{h \downarrow 0} \frac{1}{h}E\left[\sum_{v\in V} X_v(t+h) - X_v(t) \Big| X(t)\right]=-\sum_{v \in V} \psi_v(X_v(t)),
    \end{align*}

    \noindent where $\psi_v$ is the non-asymptotic processing speed of the $\Lambda_v$-coalescent.
\end{lemma}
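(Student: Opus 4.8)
The plan is to compute the infinitesimal generator of $(X(t))_{t\ge 0}$ applied to the total-mass functional $\pi \mapsto |\pi| = \sum_{v\in V}\pi(v)$, and then to recognize the answer using the identity $\gamma_b = \psi(b)$ recalled just before the statement. First I would observe that, because the system has no death and no reproduction, the total particle count $|X|$ can change only through coalescence events: a migration from $u$ to $v$ moves a particle but leaves $\sum_{w\in V} X_w$ unchanged, whereas a coalescence of $k$ particles at a site $v$ decreases $X_v$, and hence $|X|$, by exactly $k-1$. In particular $t\mapsto |X(t)|$ is non-increasing, so on the event $\{|X(t)|<\infty\}$ we have $|X(s)|\le |X(t)|<\infty$ for all $s\ge t$; by part (c) of \thmref{PPPconstruction} the process from time $t$ onward evolves as the continuous-time Markov chain with transition rates \eqref{eq:rates}, and the total rate of transitions out of any finite configuration is finite (as verified in the proof of \thmref{PPPconstruction}), so the usual one-step generator computation is legitimate.

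Next, conditionally on $X(t)$ with $|X(t)|<\infty$, the expected instantaneous rate of change of $|X|$ is the sum over all possible transitions of (rate)$\times$(change in $|X|$). Migrations contribute nothing, and, using the notation of \defref{rates}, a coalescence of $k$ out of $X_v(t)$ particles at site $v$ occurs at rate $\binom{X_v(t)}{k}\lambda^{(v)}_{X_v(t),k}$ and changes $|X|$ by $-(k-1)$. Hence
\begin{align*}
\lim_{h\downarrow 0}\frac1h E\Big[\sum_{v\in V}\big(X_v(t+h)-X_v(t)\big)\,\Big|\,X(t)\Big]
&= -\sum_{v\in V}\sum_{k=2}^{X_v(t)}(k-1)\binom{X_v(t)}{k}\lambda^{(v)}_{X_v(t),k}\\
&= -\sum_{v\in V}\gamma^{(v)}_{X_v(t)} = -\sum_{v\in V}\psi_v(X_v(t)),
\end{align*}
where $\gamma^{(v)}_b = \sum_{k=2}^b(k-1)\binom{b}{k}\lambda^{(v)}_{b,k}$ is the block-decrease rate of the $\Lambda_v$-coalescent, and the last equality is the identity $\gamma_b = \psi(b)$ (a consequence of the binomial theorem) applied to each $\Lambda_v$. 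The interchange of limit and expectation is harmless here because $0\le X_v(t)-X_v(t+h)\le X_v(t)$, so the random variable inside the expectation is bounded by the constant $|X(t)|$, and the standard short-time expansion of a finite-rate jump chain supplies the limit.

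I do not expect a genuine obstacle: once it is noted that the absence of reproduction forces $|X(t)|$ to be non-increasing, finiteness is preserved and the generator formula applies without any truncation argument, so the proof reduces to the bookkeeping above together with the already-recorded identity $\gamma_b=\psi(b)$. The only point that deserves a sentence of care is precisely the justification of the limit--expectation exchange, which the monotonicity handles cleanly.
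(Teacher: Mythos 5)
Your proof is correct and follows essentially the same route as the paper's: the paper likewise observes that with no death or reproduction only coalescence changes $|X|$, identifies the loss rate as $\sum_{v}\gamma^{(v)}_{X_v(t)}=\sum_v\psi_v(X_v(t))$, and stops there, so your version simply supplies the bookkeeping and the limit--expectation justification more explicitly. (One tiny slip: the per-site inequality $0\le X_v(t)-X_v(t+h)$ can fail because migration may increase $X_v$, but the aggregate bound $-|X(t)|\le |X(t+h)|-|X(t)|\le 0$ that you actually use is valid.)
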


\begin{proof}
    We first see that since migrations do not change the total number of particles and the fact there is no death or reproduction, the only thing that may cause a change in total number of particles is coalescence. Let $\gamma^{(v)}_b$ be the rate at which blocks decease in the $\Lambda_v$-coalescent when there are $b$ blocks present. Now as $|X(t)| < \infty$ we see that the rate at which particles are lost due to coalescence at time $t$ will be given by
    \begin{align*}
        \sum_{v \in V} \gamma^{(v)}_{X_v(t)} = \sum_{v \in V} \psi_v(X_v(t)),
    \end{align*}

    \noindent which thus proves the result.
\end{proof}

\noindent Now we wish to find lower bounds on the rate of decrease of particles. As was done in \cite{Johnston2023}, if $V = \{v_1, \ldots, v_N\}$ we shall define the function $\Omega \colon [0,\infty) \to [0, \infty)$ by 
\begin{align*}
    \Omega(x) = \min_{x_1 + \cdots + x_N = x} \sum_{i = 1}^N \psi_{v_i}(x_i).
\end{align*}

\noindent To apply Jensen's inequality later on we have the following lemma which is an analogue of Lemma 4.3 in \cite{Johnston2023} for our situation.

\begin{lemma}\label{L:convex}
    The functions $\Omega\colon [0,\infty) \to [0, \infty)$ and $\Psi(x_1, \ldots, x_N) = \sum_{i = 1}^N \psi_{v_i}(x_i)$ from $\R_{\geq 0}^V \to \R$ are convex.
\end{lemma}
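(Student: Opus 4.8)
The plan is to reduce the statement to the convexity in one variable of each non-asymptotic processing speed $\psi_v$, and then propagate this: first to $\Psi$ by separability, and then to $\Omega$ via the general principle that minimizing a jointly convex function over an affine slice of its domain yields a convex function of the slice parameter.

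\emph{Step 1: convexity of each $\psi_v$ on $[0,\infty)$.} Write
\begin{align*}
\psi_v(q) = \Lambda_v(\{0\})\,\frac{q(q-1)}{2} + \int_{(0,1)} g_z(q)\,\frac{\Lambda_v(dz)}{z^2}, \qquad g_z(q) \coloneqq qz - 1 + (1-z)^q,
\end{align*}
where we have used that in this section no controlling measure has an atom at $1$. The first summand is a quadratic polynomial in $q$ with nonnegative leading coefficient, hence convex. For each fixed $z \in (0,1)$ the function $q \mapsto g_z(q)$ is convex on $[0,\infty)$, being the sum of the affine map $q \mapsto qz - 1$ and the exponential $q \mapsto e^{q\log(1-z)}$. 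The second-order Taylor expansion of $s \mapsto (1-s)^q$ around $s = 0$ gives $g_z(q) = O(z^2)$ as $z \to 0$, uniformly for $q$ in compact sets, so $\int_{(0,1)} g_z(q)\,z^{-2}\Lambda_v(dz)$ converges absolutely; integrating the convexity inequality $g_z(\theta q_1 + (1-\theta) q_2) \le \theta g_z(q_1) + (1-\theta) g_z(q_2)$ against the nonnegative measure $z^{-2}\Lambda_v(dz)$ then shows the integral term is convex in $q$. Hence $\psi_v$ is convex; and the same $O(z^2)$ estimate shows it is finite and continuous on $[0,\infty)$, which I will use in Step 3.

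\emph{Step 2: convexity of $\Psi$.} Since $\Psi(x_1,\dots,x_N) = \sum_{i=1}^N \psi_{v_i}(x_i)$ is a sum of one-variable convex functions, each acting on a single coordinate, convexity along any segment in $\R_{\ge 0}^V$ follows by applying Step 1 coordinatewise and summing, noting that $x_i, y_i \ge 0$ forces $\theta x_i + (1-\theta) y_i \ge 0$ so we remain in the domain of each $\psi_{v_i}$.

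\emph{Step 3: convexity of $\Omega$.} Fix $x, y \ge 0$ and $\theta \in [0,1]$. By the continuity of $\Psi$ established in Step 1 and the compactness of the simplex $\{(x_i)_{i} : x_i \ge 0,\ \sum_i x_i = x\}$, the minimum defining $\Omega(x)$ is attained, say at a point $x^\ast$, and likewise $\Omega(y)$ is attained at $y^\ast$. The vector $\theta x^\ast + (1-\theta) y^\ast$ has nonnegative coordinates summing to $\theta x + (1-\theta) y$, hence is admissible in the minimization defining $\Omega(\theta x + (1-\theta)y)$, so by Step 2
\begin{align*}
\Omega\big(\theta x + (1-\theta) y\big) \le \Psi\big(\theta x^\ast + (1-\theta) y^\ast\big) \le \theta\,\Psi(x^\ast) + (1-\theta)\,\Psi(y^\ast) = \theta\,\Omega(x) + (1-\theta)\,\Omega(y),
\end{align*}
which is the desired convexity. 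The main thing to get right is Step 1 — verifying that $\psi_v$ is finite (so that $\Psi$ is real-valued everywhere and the minimum in Step 3 is actually attained) and that the integral against $z^{-2}\Lambda_v(dz)$ of the convex functions $g_z$ is again convex — but both are routine once one has the elementary bound $g_z(q) = O(z^2)$ near $z = 0$; Steps 2 and 3 are then formal.
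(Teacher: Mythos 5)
Your proof is correct and follows essentially the same route as the paper's: convexity of each $\psi_v$ from pointwise convexity of the integrand $q \mapsto qz - 1 + (1-z)^q$, convexity of $\Psi$ by summing over coordinates, and convexity of $\Omega$ as a partial minimization of the jointly convex $\Psi$ over the affine slice $\sum_i x_i = x$. The only differences are cosmetic: you justify integrability of the integrand and attainment of the minimum (details the paper leaves implicit), while the paper phrases the final step as a chain of identities and inequalities on minima rather than via explicit minimizers.
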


\begin{proof}
    The proof of this lemma is essentially the same as Lemma 4.3 in \cite{Johnston2023}. We shall first show that $\Psi(x_1, \ldots, x_N)$ is convex. We see that for each fixed $z \in [0,1]$ and $i$, the map
    \begin{align*}
        x \mapsto x_iz - 1 + (1-z)^{x_i},
    \end{align*}

    \noindent is convex. Thus we have that for each $i$ that the integral
    \begin{align*}
        x \mapsto \int_{(0,1]} x_iz - 1 + (1-z)^{x_i} \frac{\Lambda_{v_i}(dz)}{z^2},
    \end{align*}

    \noindent is also convex. Thus from the definition of $\psi_{v_i}$ we have that $\sum_{i = 1}^N \psi_{v_i}$ is a sum of convex functions which means that $\Psi(x_1, \ldots, x_N)$ is thus convex as desired. Now by the exact same computation as in Lemma 4.3 of \cite{Johnston2023} we can show that $\Omega$ is convex by seeing that
    \begin{align*}
        \Omega(\lambda x + (1-\lambda)y) &= \min_{z_1 + \cdots + z_N = \lambda x + (1-\lambda)y} \Psi(z_1, \ldots, z_N)\\
        &= \min_{x_1 + \cdots + x_N = x,\;y_1 + \cdots +y_N = y} \Psi(\lambda x_1 + (1-\lambda)y_1, \ldots, \lambda x_N + (1-\lambda)y_N)\\
        &\leq \min_{x_1 + \cdots + x_N = x,\;y_1 + \cdots +y_N = y} \lambda\Psi( x_1 , \ldots,x_N) + (1-\lambda)\Psi( y_1 , \ldots,y_N)\\
        &= \lambda \Omega(x) + (1-\lambda)\Omega(y).
    \end{align*}

    \noindent Thus $\Omega$ is convex.
\end{proof}

\noindent Now we have the following lemma which extends the integrability condition of the processing speed to the rate $\Omega$.

\begin{lemma}\label{L:upperboundminrate}
    Let $s>0$. Suppose that for all $i = 1, \ldots, N$ we have $\int_{s/N}^\infty \frac{1}{\psi_{v_i}(q)}\;dq$ converges. Then $\int_{s}^\infty \frac{1}{\Omega(q)}\;dq < \infty$.
\end{lemma}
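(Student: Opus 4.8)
The plan is to derive a pointwise lower bound for $\Omega$ in terms of the individual processing speeds $\psi_{v_i}$ and then integrate term by term. First I would record two elementary facts about each processing speed: by Lemma~\ref{L:convex}, $\psi_{v_i}$ is convex, and a direct computation from its definition gives $\psi_{v_i}(0)=\psi_{v_i}(1)=0$; hence $\psi_{v_i}$ is nonnegative and nondecreasing on $[1,\infty)$. (Note also that the hypothesis that $\int_{s/N}^\infty \psi_{v_i}(q)^{-1}\,dq$ converges is only possible when $s/N>1$, since $\psi_{v_i}$ vanishes at $1$; so we may assume $s>N$, and in particular $\psi_{v_i}>0$ on $[s/N,\infty)$ for each $i$.)

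The main step is the inequality
\[
    \Omega(x) \;\geq\; \min_{1\le i\le N}\psi_{v_i}(x/N) \qquad\text{for all } x\ge s.
\]
To see this, fix any decomposition $x=x_1+\cdots+x_N$ with each $x_i\ge 0$. By the pigeonhole principle there is an index $j$ with $x_j\ge x/N\ge s/N>1$. Discarding the other (nonnegative) summands and using that $\psi_{v_j}$ is nondecreasing on $[1,\infty)$,
\[
    \sum_{i=1}^N \psi_{v_i}(x_i) \;\ge\; \psi_{v_j}(x_j) \;\ge\; \psi_{v_j}(x/N) \;\ge\; \min_{1\le i\le N}\psi_{v_i}(x/N).
\]
Taking the minimum over all such decompositions yields the displayed bound. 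The one subtlety worth flagging is that the $\psi_{v_i}$ are actually negative on $(0,1)$, so one cannot simply bound every summand below by $0$: the argument genuinely relies on the single coordinate that the pigeonhole principle forces to be at least $x/N\ge 1$, together with monotonicity there.

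With this bound in hand the conclusion is a routine computation. Since $\min_{1\le i\le N}\psi_{v_i}(x/N)>0$ for $x\ge s$, we have $\Omega(x)^{-1}\le \max_{1\le i\le N}\psi_{v_i}(x/N)^{-1}\le \sum_{i=1}^N \psi_{v_i}(x/N)^{-1}$, and therefore, after the substitution $q=x/N$,
\[
    \int_s^\infty \frac{dx}{\Omega(x)} \;\le\; \sum_{i=1}^N \int_s^\infty \frac{dx}{\psi_{v_i}(x/N)} \;=\; N\sum_{i=1}^N \int_{s/N}^\infty \frac{dq}{\psi_{v_i}(q)} \;<\; \infty
\]
by the hypothesis. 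I do not anticipate a genuine obstacle: the whole proof reduces to the pigeonhole inequality, monotonicity of $\psi_{v_i}$ on $[1,\infty)$ (coming from convexity and the values $\psi_{v_i}(0)=\psi_{v_i}(1)=0$), and a change of variables, with the only point requiring care being the sign of $\psi_{v_i}$ near the origin as noted above.
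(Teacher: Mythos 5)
Your proof is correct and follows essentially the same route as the paper's: pigeonhole to force some $x_j\ge x/N$, monotonicity of $\psi_{v_j}$ there, the resulting bound $\Omega(x)\ge\min_{1\le i\le N}\psi_{v_i}(x/N)$, the estimate $\Omega(x)^{-1}\le\sum_i\psi_{v_i}(x/N)^{-1}$, and the substitution $q=x/N$. One small remark: your parenthetical ``(nonnegative) summands'' contradicts your own correct observation that $\psi_{v_i}<0$ on $(0,1)$, so the displayed chain $\sum_i\psi_{v_i}(x_i)\ge\psi_{v_j}(x_j)$ does not literally follow when some discarded $x_i$ lies in $(0,1)$ --- the paper elides the same point, and it is harmless (each $\psi_{v_i}$ is bounded below on $[0,1]$ while $\min_i\psi_{v_i}(x/N)\to\infty$, so the bound holds up to an additive constant that does not affect integrability), but the remark as written is internally inconsistent rather than a resolution of the subtlety you flagged.
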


\begin{proof}
    The proof of this lemma is essentially the same as the proof of Lemma 4.6 in \cite{Johnston2023}. We can first see that if $x_1 + \cdots + x_N = x$ then there exists $j$ such that $x_j \geq x/N$. Noting that $\psi_v$ is an increasing function for all $v \in V$, we have 
    \begin{align*}
        \Omega(x) = \min_{x_1 + \cdots + x_N = x} \sum_{i = 1}^N \psi_{v_i}(x_i) \geq \min_{j = 1, \ldots, N} \psi_{v_j}(x/N).
    \end{align*}

    \noindent We thus can obtain the following upper bound
    \begin{align*}
        \frac{1}{\Omega(x)} \leq \frac{1}{\min_{j = 1, \ldots, N} \psi_{v_j}(x/N)} = \max_{j = 1,\ldots, N} \frac{1}{\psi_{v_j}(x/N)}\leq \sum_{i = 1}^N \frac{1}{\psi_{v_i}(x/N)}.
    \end{align*}

    \noindent Thus thus gives us our desired claim as we see that
    \begin{align*}
        \int_{s}^\infty \frac{1}{\Omega(q)}\;dq \leq \sum_{i = 1}^N \int_{s}^\infty \frac{1}{\psi_{v_i}(q/N)}\;dq = N\sum_{i = 1}^N \int_{s/N}^\infty \frac{1}{\psi_{v_i}(q)}\;dq < \infty.
    \end{align*}
\end{proof}

\noindent Now we can use the above lemma to control the total number of blocks in the system via an ordinary differential equation comparison argument. This lemma is an analogue of Lemma 4.7 in \cite{Johnston2023}.

\begin{lemma}\label{L:odecomp}
    Let $(X_n(t))_{t \geq 0}$ be a coordinated particle system on a finite set of sites $V = \{v_1, \ldots, v_N\}$ with no death and no reproduction. The initial state will be given by $X_n(0) = (n, \ldots, n)$ where there are $n$ particles at each site. We shall now define $f_n(t) = E[|X_n(t)|]$ to be the expected total number of blocks at time $t$. Define $w_n(t)$ to be the solution to the integral equation 
    \begin{align*}
        w_n(t) = nN - \int_0^t \Omega(w_n(s))\;ds.
    \end{align*}

    \noindent Then $f_n(t) \leq w_n(t)$ for all $t \geq 0$.
\end{lemma}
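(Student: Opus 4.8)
The plan is to follow the strategy of Lemma~4.7 in \cite{Johnston2023}: derive a closed integral inequality for $f_n$ from the generator computation of \lemref{expchange}, use convexity of $\Omega$ together with Jensen's inequality to pass the expectation inside, and then invoke a standard ODE comparison against the equation defining $w_n$.

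First I would record the integral representation of $f_n$. Since $(X_n(t))_{t\ge 0}$ has no death and no reproduction, $t\mapsto |X_n(t)|$ is non-increasing and decreases only at coalescence events; moreover $|X_n(t)|\le nN$ for all $t$, so every transition rate involved is bounded uniformly in $t$ by a constant depending only on $n$ and the total masses of the $\Lambda_{v_i}$. Consequently $|X_n(t)| + \int_0^t \Psi(X_n(s))\,ds$ is a martingale (this is exactly \lemref{expchange} combined with Dynkin's formula), so
\begin{align*}
    f_n(t) = nN - \int_0^t E\big[\Psi(X_n(s))\big]\,ds,
\end{align*}
and in particular $f_n$ is Lipschitz, hence continuous. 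By the very definition of $\Omega$ we have $\Psi(x_1,\dots,x_N)\ge \Omega(x_1+\cdots+x_N)$ for all $(x_1,\dots,x_N)$, so $\Psi(X_n(s))\ge \Omega(|X_n(s)|)$ pointwise. Taking expectations and applying Jensen's inequality — legitimate since $\Omega$ is convex by \lemref{convex} — gives $E[\Psi(X_n(s))]\ge \Omega(E[|X_n(s)|]) = \Omega(f_n(s))$, hence
\begin{align*}
    f_n(t) \le nN - \int_0^t \Omega(f_n(s))\,ds.
\end{align*}

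It then remains to compare this with the equality defining $w_n$. Note that $\Omega$ is convex, non-negative, and $\Omega(0)=\sum_i \psi_{v_i}(0)=0$, which forces $\Omega$ to be non-decreasing and continuous on $[0,\infty)$ (and locally Lipschitz on $(0,\infty)$, so $w_n$ is well defined and non-increasing). I would then run the classical first-crossing argument: for $\varepsilon>0$ let $w_n^\varepsilon$ solve $w_n^\varepsilon(t)=nN+\varepsilon-\int_0^t\Omega(w_n^\varepsilon(s))\,ds$, so $f_n(0)<w_n^\varepsilon(0)$; if $f_n(t_0)=w_n^\varepsilon(t_0)$ at a first crossing time $t_0>0$ with $f_n<w_n^\varepsilon$ on $[0,t_0)$, then monotonicity of $\Omega$ yields $\int_0^{t_0}\Omega(f_n(s))\,ds\le\int_0^{t_0}\Omega(w_n^\varepsilon(s))\,ds$, whence $f_n(t_0)\le nN-\int_0^{t_0}\Omega(f_n)\le nN-\int_0^{t_0}\Omega(w_n^\varepsilon)<w_n^\varepsilon(t_0)$, a contradiction. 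Thus $f_n<w_n^\varepsilon$ for all $t$, and letting $\varepsilon\downarrow 0$ (using continuity of $\Omega$ and stability of the integral equation, together with the monotonicity $w_n^\varepsilon\downarrow w_n$) gives $f_n\le w_n$ for all $t\ge 0$.

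I expect the main obstacle to be the bookkeeping in the first step — a careful justification that the expected number of particles lost up to time $t$ is exactly $\int_0^t E[\Psi(X_n(s))]\,ds$, and hence that $f_n$ is absolutely continuous — together with the mild regularity of $\Omega$ near the origin needed to make the comparison airtight (continuity and monotonicity of $\Omega$ at $0$, and well-posedness of the equation for $w_n$). Both are routine given $|X_n(t)|\le nN$ and \lemref{convex}, but they are where the care is needed; everything else is a direct consequence of the already-established lemmas.
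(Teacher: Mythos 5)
Your proposal is correct and follows essentially the same route as the paper: the integral representation from \lemref{expchange}, Jensen's inequality via the convexity established in \lemref{convex}, and an ODE comparison against the equation defining $w_n$. The only (harmless) differences are that you apply Jensen to the univariate convex function $\Omega$ after the pointwise bound $\Psi(x)\ge\Omega(|x|)$, whereas the paper applies multivariate Jensen to $\Psi$ and then minimizes, and that you spell out the first-crossing comparison argument that the paper delegates to a citation of the ODE comparison theorem.
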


\begin{proof}
    The proof of this lemma is essentially the same of the proof of Lemma 4.7 in \cite{Johnston2023}, but we shall repeat it for completion. We first see by \lemref{expchange} that 
    \begin{align*}
        f_n(t) = nN - \int_0^t E[\Psi(X_{v_1}(s), \ldots, X_{v_N}(s))]\;ds,
    \end{align*}

    \noindent where $\Psi$ is defined as in \lemref{convex}. Now by an application of Jensen's inequality, using \lemref{convex}, we have that
    \begin{align*}
        f_n(t) \leq nN -\int_0^t\Psi( E[X_{v_1}(s)], \ldots, E[X_{v_N}(s)])\;ds \leq nN -\int_0^t \Omega(f_n(s))\;ds,
    \end{align*}

    \noindent where the last inequality is by the definition of $\Omega$. Now by using the ODE comparison theorem we have that $f_n(t) \leq w_n(t)$ as desired.
\end{proof}

\noindent We are now equipped to prove \propref{comedown} using the above lemma, an argument that is similar to the proof of the reverse direction to Theorem 1.8 in \cite{Johnston2023}.

\begin{proof}[Proof of \propref{comedown}]
    The proof of this proposition is essentially the same as the proof of the reverse direction of Theorem 1.8 in \cite{Johnston2023}. We shall first note that since the $\Lambda_{v_i}$-coalescent comes down from infinity for all $i = 1, \ldots, N$ we have that
    \begin{align*}
        \int_2^\infty \frac{1}{\psi_{v_i}(q)}\;dq < \infty.
    \end{align*}

    \noindent Thus by an application of \lemref{upperboundminrate}, for all $s \geq 2N$ we have that
    \begin{align*}
        \int_{s}^\infty \frac{1}{\Omega(q)}\;dq < \infty.
    \end{align*}

    \noindent Thus for $t$ sufficiently small independent of $n$, we can see that the $w_n(t)$ defined in \lemref{odecomp} can be equivalently be defined as
    \begin{align*}
        t = \int_{w_n(t)}^{nN} \frac{1}{\Omega(q)}\;dq.
    \end{align*}

    \noindent Thus for $t$ sufficiently small we can now define $w_\infty(t)$ to be the solution to the integral equation
    \begin{align*}
        t = \int_{w_\infty(t)}^\infty \frac{1}{\Omega(q)}\;dq.
    \end{align*}

    \noindent Now it is clear that $w_n(t) \leq w_{n+1}(t) \leq \cdots \leq w_\infty(t)$. Thus by an application of \lemref{odecomp} we have that for all $n \in \N$ that
    \begin{align*}
        E[|X_n(t)|] \leq w_\infty(t) < \infty.
    \end{align*}

    \noindent Now taking $n \to \infty$ we have $|X_n(t)| \uparrow |X(t)|$ a.s. by part (b) of \thmref{PPPconstruction}, so by the monotone convergence theorem we have $|X(t)| < \infty$ almost surely for sufficiently small $t$. This proves that the coordinated particle system comes down from infinity.
\end{proof}

\noindent Now using this proposition we are able to give a quick proof of \thmref{converse}.

\begin{proof}[Proof of \thmref{converse}]
    We shall prove the contrapositive to this statement. It is clear that if condition (a) doesn't hold then the coordinated particle system starts with only finitely many particles which means that it comes down from infinity. 
    \medskip
    
    \noindent Now assume that condition (b) does not hold, i.e. for every site $v \in V$ the measure $\Lambda_v(dz)$ is such that the $\Lambda_v$-coalescent comes down from infinity. Now as there is no reproduction, and death can only decrease the number of particles, an application of \propref{comedown} shows us that the coordinated particle system $(X(t))_{t \geq 0}$ comes down from infinity in this case. 
    \medskip
    
    \noindent Now assume that condition (c) does not hold. Then we can see that there exists a subset of sites $V' \subseteq V$ such that for all $v \in V'$ the $\Lambda_v$-coalescent comes down from infinity, all the sites in $V \setminus V'$ start with only finitely many particles, and all the migration and reproduction measures $M_{uv}(dz)$ and $R_{uv}(dz)$ for $u \in V'$ and $v \in V\setminus V'$ are not $\Lambda$-strong. This means that only finitely many particles will enter sites in $V \setminus V'$. Thus we can reduce to the case where condition (b) does not hold by just considering the sites $V'$ and thus see that $(X(t))_{t \geq 0}$ comes down from infinity.
\end{proof}

\section*{Acknowledgements}

The author would like to thank Professor Jason Schweinsberg for his support and guidance as well as his detailed comments on earlier drafts of this work.

\bibliographystyle{alpha}
\bibliography{bibliography}

\end{document}